\documentclass[12pt]{amsart}

\usepackage{tikz}
\usepackage[a4paper]{geometry}
\usepackage{amsmath,amssymb,amsthm}
\usepackage{bbm}
\usepackage{todonotes}

\usepackage{silence}
\WarningFilter{biblatex}{Patching footnotes failed.}
\WarningFilter{biblatex}{Using fall-back}
\WarningFilter{hyperref}{Token not allowed}

\usetikzlibrary{matrix,arrows}
\usepackage[hyphens]{url}
\usepackage[backend=biber]{biblatex}
\addbibresource{article.bib}
\usepackage{hyperref}
\usepackage{microtype}
\usepackage[utf8]{inputenc}

\parindent 0pt
\parskip 1ex

\emergencystretch 1em

\newtheorem{lem}{Lemma}[section]
\newtheorem{prop}[lem]{Proposition}
\newtheorem{thm}[lem]{Theorem}

\newtheorem{korl}[lem]{Corollary}

\theoremstyle{definition}
\newtheorem{dfn}[lem]{Definition}
\newtheorem{ex}[lem]{Example}

\theoremstyle{remark}

\newcommand{\R}{\mathbb{R}}
\newcommand{\Q}{\mathbb{Q}}
\newcommand{\N}{\mathbb{N}}
\newcommand{\C}{\mathbb{C}}
\newcommand{\id}{\mathrm{id}}

\DeclareMathOperator{\End}{End}

\DeclareMathOperator{\diam}{diam}
\DeclareMathOperator{\hc}{hc}
\DeclareMathOperator{\Fr}{Fr}

\DeclareMathOperator{\dist}{dist}

\title{Almost flat bundles and homological invariance of infinite K-area}
\author[B.\ Hunger]{Benedikt Hunger}
\email{benedikt.seelstrang@math.uni-augsburg.de}

\begin{document}

\begin{abstract}
  We extend the notion of an almost flat bundle over a closed Riemannian manifold to bundles over simplicial complexes, and prove 
  that up to a constant factor, this notion is invariant under pullback via maps which induce isomorphisms on fundamental groups. 
  As an application, we show that the property of having infinite K-area only depends on the image of the fundamental class under 
  the classifying map of the universal cover.
\end{abstract}

\maketitle 

\section{Introduction and statement of results}

Connes, Gromov and Moscovici \cite{connes-gromov-moscovici-conjecture-de-novikov} introduced the notion of almost flat K-theory
classes in order to give a unified approach to different special cases of the Novikov conjecture. We first recall the definition
of an asymptotically flat K-theory class\footnote{The term in \cite{connes-gromov-moscovici-conjecture-de-novikov} is ``fibré
presque plat'', while the standard term is ``almost flat K-theory class''. However, following Manuilov
and Mishchenko \cite{manuilov-mishchenko-almost-asymptotic-fredholm-representations}, we will use the word \emph{almost} for
notions depending on a parameter $\epsilon>0$, and \emph{asymptotic} for any kind of limit as $\epsilon\to 0$.}\ as introduced by
Connes, Gromov and Moscovici.

Let $M$ be a differentiable manifold. The \emph{curvature} of a smooth vector bundle $E\to M$ with connection $\nabla$ is the
endomorphism-valued 2-form $\mathcal R^\nabla\in\Omega^2(M;\End(E))$ given by the formula
\[
  \mathcal R^\nabla(X,Y)s=\nabla_X\nabla_Ys-\nabla_Y\nabla_Xs-\nabla_{[X,Y]}s.
\]
Suppose in addition that $E$ is a Hermitian vector bundle (i.~e. a complex vector bundle where the fibres are equipped with a 
smoothly varying Hermitian inner product) and the connection $\nabla$ is compatible with the metric (i.~e. parallel transport is
an isometry). Then $\End(E)$ is a bundle of normed spaces where the norm over each fibre is simply
the operator norm. If, additionally, $M$ is a Riemannian manifold with metric $g$, the norm of the curvature of $E$ is defined by
\[
  \|\mathcal R^\nabla\|_g=\sup_{\substack{X\wedge Y\in\Lambda^2TM\\\|X\wedge Y\|_g\leq 1}}\|\mathcal R^\nabla(X,Y)\|_{\text{op}}.
\]
Here the norm on $\Lambda^2TM$ is given by $\|X\wedge Y\|_g^2=\|X\|^2_g\|Y\|^2_g-g(X,Y)^2$.

Now a class $\eta\in K^0(M)$ is \emph{asymptotically flat} if there are sequences $(E_n,\nabla_n)$ and 
$(\tilde E_n,\tilde\nabla_n)$ of Hermitian vector bundles with compatible connecion over $M$, such that $\eta=[E_n]-[\tilde E_n]$ 
for every $n\in\N$, and such that
\[
  \lim_{n\to\infty}\|\mathcal R^{\nabla_n}\|_g=\lim_{n\to\infty}\|\mathcal R^{\tilde\nabla_n}\|_g=0.
\]

A priori, this notion of asymptotic flatness depends on the choice of metric on $M$. However, if $M$ is compact, then any two
metrics are bi-Lipschitz equivalent. Thus, given metrics $g$ and $\tilde g$ on a compact manifold $M$, there is a constant $c>0$
such that
\[
  \|\mathcal R^\nabla\|_g\leq c\|\mathcal R^\nabla\|_{\tilde g}
\]
for all Hermitian bundles $(E,\nabla)$ with compatible connection. In particular, K-theory classes are asymptotically flat with 
respect to $g$ if and only if they are almost flat with respect to $\tilde g$.

Now suppose that $(E,\nabla)$ is a Hermitian vector bundle with compatible connection over $M$ which satisfies 
$\|\mathcal R^\nabla\|_g\leq\epsilon$. We will call such a bundle an
\emph{$\epsilon$-flat bundle} (or an $\epsilon$-almost flat bundle) over $M$. A fundamental result from classical Riemannian 
geometry states that if we are given a
bundle with small curvature then parallel transport along a nullhomotopic curve $\gamma$ is $c\epsilon$-close to the identity, 
where $c$ is a constant depending only on $M$ and $\gamma$. In fact, $c$ may be chosen as the area of a disk filling $\gamma$.

We may use this to construct a map $f\colon\pi_1(M,x_0)\to U(l)$ as follows: We fix a trivialization of the fibre over the 
base-point $x_0$. For each class $c\in\pi_1(M,x_0)$, we choose a representing
loop $\gamma$. We define $f(c)$ to be the parallel transport along $\gamma$ with respect to the
trivialization of the fibre over the base-point. Then $f$ might not be a group homomophism, but 
$\|f(gh)-f(g)f(h)\|\leq c(g,h)\epsilon$, where $c(g,h)>0$ is a constant
depending only on $g$ and $h$, but not on the bundle $E$. Such data constitute a \emph{quasi-representation} of $M$, and in fact,
almost flat bundles and quasi-representations of the fundamental group turn out to be two sides of the same coin. This
relationship between almost flat bundles and quasi-representations was already noted by Connes, Gromov and Moscovici
\cite{connes-gromov-moscovici-conjecture-de-novikov} and was made precise by Carrión and Dadarlat
\cite{carrion-dadarlat-almost-flat-k-theory}. However, their exposition seems to be a bit ad hoc, while similar results will be 
natural consequences of the results presented in this paper.

There is another important consequence of the fact that parallel transport along contractible curves is close to the identity.
Namely, suppose that $M$ is smoothly triangulated, and $\sigma$ is a simplex of $M$. Then, after a choice of basis for the fibre
over the barycentre of $\sigma$, we can trivialize the $E$ over $\sigma$ by parallel transporting this basis from the barycenter
outwards. Now if $\rho$ is another simplex then the transition functions between those two trivializations turn out to be
Lipschitz functions, where the Lipschitz constant is small if $\epsilon$ is small. 

This idea enabled Mishchenko and Teleman
\cite{mishchenko-teleman-almost-flat-bundles} to show that every $\epsilon$-flat bundle can be pulled back from a bundle over
$B\pi_1(M)$ along the classifying map of the universal cover of $M$ if $\epsilon$ is sufficiently small. In the course of the
proof of this statement, they introduced the concept of small bundles over a simplicial complex: An
$\epsilon$-flat bundle is a vector bundle such that the transition functions with respect to some family of trivializations over 
the simplices are $\epsilon$-Lipschitz. We will adapt to this definition in this paper and show that it is, in fact, more or less 
equivalent to the old one if we consider bundles over a triangulated manifold. Also, our results will prove a generalization of
the theorem of Mishchenko and Teleman, namely that every $\epsilon$-flat bundle is the pull-back of a $c\epsilon$-flat bundle over
a finite subcomplex of $B\pi_1(M)$ if $\epsilon$ is small enough.

Dual to the concept of almost flat K-theory classes is the notion of \emph{infinite K-area} introduced by Gromov
\cite{gromov-positive-curvature-macroscopic-dimension}. Namely, a Riemannian manifold $M$ has infinite K-area if, for every
$\epsilon>0$, there is an $\epsilon$-flat bundle $(E,\nabla)$ over $M$ with at least one non-vanishing Chern number. Infinite
K-area is one of several important largeness properties of Riemannian manifolds introduced by Gromov and Lawson
\cite{gromov-large-riemannian-manifolds,gromov-lawson-spin-scalar-curvature}. Brunnbauer and Hanke 
\cite{brunnbauer-hanke-large-and-small} showed that other largeness properties, including enlargeability, are 
\emph{homologically invariant} in the sense that they only depend on 
the image of the fundamental class under the classifying map of the universal cover. Their proof proceeds as follows: First they 
define enlargeability of an arbitrary homology
class of a simplicial complex in such a way that a closed Riemannian manifold $M$ is enlargeable if and only if its fundamental
class $[M]$ is enlargeable. Then they use an extension lemma to show that if a map $f\colon X\to Y$ induces an isomorphism on
fundamental groups, then a class $\eta\in H_*(X)$ is enlargeable if and only if $f_*\eta\in H_*(Y)$ is. Since, by definition, the
classifying map of the universal cover $\Phi\colon M\to B\pi_1(M)$ induces an isomorphism of fundamental groups, this implies 
homological invariance of enlargeability.

Motivated by this scheme, we will define a homology class to have infinite K-area if, for every $\epsilon>0$, there is an 
$\epsilon$-flat bundle whose Chern classes detect the given homology class. An extension result for $\epsilon$-flat bundles will
then be used to show that infinite K-area is homologically invariant. This also implies that the infinite K-area
is invariant under $p$-surgery with $p\neq 1$, a fact which has been proven directly by Fukumoto 
\cite{fukumoto-invariance-karea-surgery}.

Hanke and Schick \cite{hanke-positive-scalar-curvature,hanke-schick-novikov-low-degree-cohomology} used a notion of almost flat
bundles of Hilbert-$A$-modules for arbitrary C*-algebra to prove a special case of the Novikov conjecture. It turns out that one
needs precisely the Lipschitz condition on the transition functions in order to prove their results. Therefore, it makes sense 
not only to consider Hermitian vector bundles, but also bundles of Hilbert $A$-modules for arbitrary C*-algebras $A$. 

We conclude the introduction by giving an outline of the following sections and the main results. 

In section \ref{sec:almost-flat-bundles} we will give the precise definition of an $\epsilon$-flat bundle of Hilbert $A$-modules,
and show that examples are given by Hilbert module bundles with compatible connection having small curvature.

Section \ref{sec:trivialization-lemma} provides the most important technical result of this paper, the \emph{trivialization
lemma} \ref{thm:trivialization lemma}. This states that every $\epsilon$-flat bundle over a simply-connected space is trivial if
$\epsilon$ is small enough. As a corollary, one can extend almost flat bundles defined on the boundary of a disk $D^k$ to the
whole disk, since they are trivial on the boundary. The main ingredients in the proof of the trivialization lemma are an extension
statement for unitary-valued Lipschitz functions (lemma \ref{lem:extension-of-unitary-valued-lipschitz-maps}) and a
combinatorial version of the statement that parallel transport along boundaries of small disks is close to the identity (theorem
\ref{thm:transport-along-contractible-loops}).

In section \ref{sec:first-applications}, we will give first applications of the trivialization lemma: Firstly, we show that almost 
flat bundles on the barycentric subdivision of a finite-dimensional complex are almost flat with respect to the original complex.
Secondly, we give conditions under which an almost flat bundle can be extended to an almost flat bundle over a larger subcomplex.

The rest of this paper will consist of rather easy applications of the trivialization lemma, beginning with section
\ref{sec:almost-representations}, where we relate the concepts of almost representations and quasi-representations to the concept
of almost flat K-theory classes.

In section \ref{sec:riemannian-manifolds} we will use this to show that our definition of an almost flat bundle corresponds to 
the definition via smooth connections of Connes, Gromov and Moscovici \cite{connes-gromov-moscovici-conjecture-de-novikov}. This 
will make use of an extension theorem for connections with small curvature which is mainly due to Fukumoto 
\cite{fukumoto-invariance-karea-surgery} in his proof of invariance of infinite K-area under certain surgeries. This will be used
later in order to show that our definition of infinite K-area is a generalization of Gromov's
\cite{gromov-positive-curvature-macroscopic-dimension} definition.

Next, in section \ref{sec:pullbacks}, we use our results on extension of almost flat bundles in order to show the following
functoriality result: Given a map $f\colon X\to Y$, almost flat bundles over $Y$ pull back to
almost flat bundles over $X$, and if the map $f$ induces an isomorphism on fundamental groups, the pull-back map is in fact
surjective (in a certain sense) on almost flat bundles. This gives the generalization of the theorem of Mishchenko and Teleman
\cite{mishchenko-teleman-almost-flat-bundles} cited before. This section is independent of section \ref{sec:riemannian-manifolds}.

We will put all those results together in section \ref{sec:homological-invariance} to define a notion of infinite K-area for 
arbitrary homology classes of simplicial complexes, and to prove homological invariance, i.~e. a class has infinite K-area if and 
only if its image under the classifying map of the universal cover has infinite K-area. This will directly imply that, for a 
Riemannian manifold, having infinite K-area only depends on the image of the fundamental class under the classifying map of the 
universal cover. We will show how to use this to regather the theorem of Fukumoto \cite{fukumoto-invariance-karea-surgery} about 
the invariance of infinite K-area under surgeries in codimension not equal to one.

Finally, in section \ref{sec:almost-representations}, we relate the notions of almost and asymptotic representations
\cite{manuilov-mishchenko-almost-asymptotic-fredholm-representations}, of almost flat bundles, and of asymptotically flat
K-theory. This section only uses material from sections \ref{sec:trivialization-lemma} and \ref{sec:first-applications}, and may
be read independently of the rest of this paper.

This work is based on the author's Master's thesis at Universität Augsburg. The author would like to thank his thesis advisor
Bernhard Hanke for his invaluable help and support. 

\section{Almost flat bundles}

\label{sec:almost-flat-bundles}

\subsection{Preliminaries and main definition}

The principal aim of this section is to give the definition of an almost flat bundle over an arbitrary simplicial complex. 
It seems to be useful to consider not only Hermitian bundles, but rather bundles of Hilbert C*-modules
\cite[cf.]{hanke-positive-scalar-curvature,schick-l2-index-kk-theory-connections}. We will work in this more general setting,
since it does not require any more work. 

Let $A$ be a C*-algebra. Recall that a \emph{Hilbert $A$-module} is a right $A$-module $V$ together with an inner product 
$V\times V\to A$, $(v,w)\mapsto\langle v,w\rangle$, satisfying certain conditions \cite{jensen-thomsen-elements-kk-theory}, for 
instance that $\|v\|=\sqrt{\|\langle v,v\rangle\|}$ defines a complete norm on $V$. For example, a Hilbert $\C$-module is the same
thing as a complex Hilbert space.

Given two Hilbert $A$-modules $V$ and $W$, a 
map $f\colon V\to W$ is called \emph{adjointable} if there is another map $f^*\colon W\to V$ (the \emph{adjoint} of $f$) such that
$\langle f(v),w\rangle=\langle v,f^*(w)\rangle$ for all $v\in V$, $w\in W$. It follows from the axioms of a Hilbert $A$-module
that an adjointable map is a bounded linear operator, and that it commutes with the action of $A$. We write $\mathcal L_A(V,W)$
for the set of all adjointable maps $V\to W$. In particular, $\mathcal L_A(V,W)$ is a normed vector space, equipped with the
operator norm. It turns out that $\mathcal L_A(V)=\mathcal L_A(V,V)$ is a C*-algebra with involution $f\mapsto f^*$.

An \emph{isomorphism} of Hilbert $A$-modules $V$ and $W$ is an adjointable bijection $f\in\mathcal L_A(V,W)$, satisfying $\langle
f(v),f(v')\rangle=\langle v,v'\rangle$ for all $v,v'\in V$. Obviously, a map $f\in\mathcal L_A(V,W)$ is an isomorphism if and only
if $f^*f=\id$ and $ff^*=\id$. In particular, the \emph{automorphisms} of a Hilbert $A$-module $V$ are precisely the unitary
elements of $\mathcal L_A(V)$. More generally, we write $U(A)=\{u\in A:u^*u=uu^*=1\}$ for the set of \emph{unitaries} in an
arbitrary C*-algebra $A$.

\begin{ex}\leavevmode
  \begin{itemize}
    \item Every C*-algebra $A$ is a Hilbert $A$-module with respect to the inner product $\langle x,y\rangle=x^*y$.
    \item If $V,W$ are Hilbert $A$-modules, then also $V\oplus W$ is a Hilbert $A$-module, with inner product $\langle
      v+w,v'+w'\rangle=\langle v,v'\rangle+\langle w,w'\rangle$ for $v,v'\in V$, $w,w'\in W$.
    \item If $V$ is a Hilbert $A$-module, and $p\in\mathcal L_A(V)$ is a \emph{projection}, i.~e. $p^2=p=p^*$, then also $pV$ is a
      Hilbert $A$-module, and $V\cong pV\oplus(1-p)V$.
  \end{itemize}
\end{ex}

Now a \emph{finitely generated projective} Hilbert $A$-module is a Hilbert $A$-module which is isomorphic to $pA^k$, where
$A^k=A\oplus\cdots\oplus A$ and where $p\in\mathcal L_A(A^k)$ is a projection. Of course, finitely generated projective Hilbert
$\C$-modules are nothing but finite-dimensional complex vector spaces with Hermitian inner product.

\begin{dfn}{\cite{schick-l2-index-kk-theory-connections}}
  A Hilbert $A$-module bundle over a space $X$ is a fibre bundle $E\to X$ with typical fibre a finitely generated projective
  Hilbert $A$-module $V$, and with structure group $U(\mathcal L_A(V))$.
\end{dfn}

In particular, such a bundle may be described by local trivializations such that the transition functions take values in 
$U(\mathcal L_A(V))$. This gives a well-defined $A$-valued inner product on every fibre, such that every fibre is a Hilbert
$A$-module isomorphic to $V$. A Hilbert $\C$-module bundle is the same thing as a Hermitian vector bundle, since $U(\mathcal
L_\C(V))\cong U(n)$ is the classical group of unitary matrices.

In order to fix notations, recall that an (abstract) simplicial 
complex consists of a set $V_X$, the \emph{vertices}, and a set $X$ of non-empty finite subsets of $V_X$, the \emph{simplices},
such that every one-element set $\{p\}$ ($p\in V_X$) is contained in $X$, and such that $\emptyset\neq\rho\subset\sigma\in X$
implies that also $\rho\in X$, i.~e., $X$ is closed under taking non-empty subsets. By abuse of notation, we will refer to these 
data as the simplicial complex $X$. The \emph{dimension} of a simplex $\sigma\in X$ is $\dim(\sigma)=\#\sigma-1\in\N$. We denote
by $X_n$ the set of all simplices of dimension $n$, called the \emph{$n$-simplices}.

If $X$ is a simplicial complex and $k\geq 0$ is a number, then $X^{(k)}$ is the simplicial complex having the same set of vertices
as $X$, and the simplices of $X^{(k)}$ are precisely the simplices of $X$ which have dimension at most $k$.

The \emph{geometric realization} of a simplicial complex $X$ is the topological space whose underlying set $|X|$ is the set of all
real linear combinations $\sum_{p\in V_X}\lambda_p\cdot p$, such that
\begin{itemize}
  \item the set of those $p\in V_X$ with $\lambda_p\neq 0$ is a simplex of $X$ (and in particular, there are only finitely many
    non-zero $\lambda_p$), and
  \item $\sum_{p\in V_X}\lambda_p=1$.
\end{itemize}
For every simplex $\sigma\in X_n$, after a choice of ordering $\sigma=\{p_0,\ldots,p_n\}$ of its vertices, there is an injective
map 
\[
  j_\sigma\colon\Delta^n\to|X|,\quad (\lambda_0,\ldots,\lambda_n)\mapsto\sum_{i=0}^n\lambda_i\cdot p_i.
\]
Here the \emph{standard $n$-simplex} $\Delta^n\subset\R^{n+1}$ is the convex hull of the standard unit vectors in $\R^{n+1}$,
i.~e. the set of all tuples $(\lambda_0,\ldots,\lambda_n)$ such that $\sum_{i=0}^n\lambda_i=1$. Now $|X|$ is equipped with the
weakest topology such that all $j_\sigma$ are continuous. This means that a set $U\subset|X|$ is open if and only if all
$j_\sigma^{-1}U$ are open. In particular, the maps $j_\sigma$ are embeddings of topological subspaces. We denote by
$|\sigma|=j_\sigma(\Delta^n)$ the geometric realization of the simplex $\sigma\in X_n$. Thus, the elements of $|\sigma|$ are
convex combinations of the vertices of $\sigma$.

Now let $X$ be a simplicial complex, and let $E\to|X|$ be a Hilbert $A$-module bundle modelled on the finitely generated 
projective Hilbert $A$-module $V$, for instance $V=\C^n$. Suppose that for each 
simplex $\sigma\in X_n$, we have a trivialization $\Phi_\sigma\colon|\sigma|\times V\to j_\sigma^*E$, i.~e. 
$\Phi_\sigma|_{\{x\}\times V}$ is an isomorphism of Hilbert $A$-modules for each $x\in\Delta^n$. For
ordinary Hermitian bundles, this simply means that $\Phi_\sigma$ respects the inner product in every fibre.

Now consider a simplex $\sigma\in X_n$ and some sub-simplex $\rho\subset\sigma\in X_k$. We define the transition function
\[
  \Psi_{\rho\subset\sigma}\colon|\rho|\to U(\mathcal L_A(V)),\quad x\mapsto\Phi_\rho(x,\cdot)^{-1}\circ\Phi_\sigma(x,\cdot).
\]

\begin{dfn}
  An \emph{$\epsilon$-flat family of trivializations} (where $\epsilon>0$ is a number) of a Hilbert $A$-module bundle $E\to|X|$ 
  consists of trivializations $\Phi_\sigma\colon|\sigma|\times V\to j_\sigma^*E$, such that the transition functions 
  $\Psi_{\rho\subset\sigma}\colon|\rho|\to U(\mathcal L_A(V))$ are Lipschitz functions with Lipschitz constant at most $\epsilon$. 
  Here $|\rho|$ carries the metric such that $j_\rho\colon\Delta^k\to|\rho|$ is an isometry. An 
  \emph{$\epsilon$-flat bundle} is a Hilbert $A$-module bundle together with an $\epsilon$-flat family of trivializations. An 
  \emph{almost flat bundle} is an $\epsilon$-flat bundle for some $\epsilon$.
\end{dfn}

Naturally, the equivalence class of $E$ is uniquely determined by the transition functions. Therefore, an equivalent formulation
of an $\epsilon$-flat bundle could simply specify a family of $\epsilon$-Lipschitz transition functions satisfying appropriate
cocycle conditions.

\subsection{Example: Hilbert module bundles with connections}

An important class of examples for $\epsilon$-flat bundles comes from Riemannian geometry. Namely, let $E\to M$ be a \emph{smooth}
Hilbert $A$-module bundle over a Riemannian manifold $M$. This means that $M$ can be covered by open sets $U_i$ such that $E$ can
be trivialized over each $U_i$, and such that the transition functions $U_i\cap U_j\to U(\mathcal L_A(V))$ are smooth. A
\emph{connection} on $E$ \cite{schick-l2-index-kk-theory-connections} is a linear map 
$\nabla\colon\mathcal C^\infty(TM)\otimes\mathcal C^\infty(E)\to\mathcal C^\infty(E)$, $X\otimes s\mapsto\nabla_Xs$, such that 
\begin{itemize}
  \item $\nabla_X(s\cdot f)=s\cdot(Xf)+(\nabla_Xs)\cdot f$, and
  \item $\nabla_{gX}s=g\nabla_Xs$
\end{itemize}
for every $X\in\mathcal C^\infty(TM)$, $f\in\mathcal C^\infty(M;A)$, $g\in\mathcal C^\infty(M)$ and $s\in\mathcal C^\infty(E)$.
Such a connection is called \emph{compatible} (with the metric) if $X\langle s,s'\rangle=\langle\nabla_Xs,s'\rangle+\langle
s,\nabla_Xs'\rangle$ for all $X\in\mathcal C^\infty(TM)$, $s,s'\in\mathcal C^\infty(E)$.

\begin{ex}
  If $E=M\times V$ is trivial, then a compatible connection $\partial$ on $E$ is given by partial derivative 
  $\partial_Xs=\left.\frac d{dt}\right|_{t=0}s(\gamma(t))$ where $\gamma$ is a curve with $\gamma'(0)=X$.
\end{ex}

For any C*-algebra $A$, we consider the subspace of \emph{skew-adjoint} elements $\mathfrak u(A)=\{x\in A: x^*+x=0\}$.

\begin{prop}
  Every compatible connection $\nabla$ on a trivial bundle $E=M\times V$ is of the form $\nabla_Xs=\partial_Xs+\Gamma(X)s$ where 
  $\Gamma\in\mathcal C^\infty(T^*M\otimes\mathfrak u(\mathcal L_A(V)))$ is a smooth $\mathfrak u(\mathcal L_A(V))$-valued 1-form.
\end{prop}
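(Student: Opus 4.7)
The plan is to define $\Gamma(X) := \nabla_X - \partial_X$ and to verify successively that (a) $\Gamma$ is tensorial in both slots, so that it defines a pointwise operator on $V$, (b) this operator is $A$-linear and adjointable with $\Gamma(X)^* = -\Gamma(X)$, and (c) it depends smoothly on $X$ and the base point.

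First I would unwind the two Leibniz-type axioms for $\nabla$ and $\partial$. Tensoriality in $X$ is immediate: both connections satisfy $\nabla_{gX} = g\nabla_X$ and $\partial_{gX} = g\partial_X$ for $g \in \mathcal{C}^\infty(M)$, so $\Gamma(gX) = g\,\Gamma(X)$; combined with $\R$-linearity this shows $\Gamma$ is a $\mathcal{C}^\infty(M)$-linear map $\mathcal{C}^\infty(TM) \otimes \mathcal{C}^\infty(E) \to \mathcal{C}^\infty(E)$, i.e. a 1-form with values in $\R$-linear endomorphisms of $\mathcal{C}^\infty(E)$. Tensoriality in $s$ comes from the other axiom: for $f \in \mathcal{C}^\infty(M; A)$, the difference of the two Leibniz rules yields
\[
  \Gamma(X)(s \cdot f) = \nabla_X(s\cdot f) - \partial_X(s\cdot f) = (\Gamma(X)s)\cdot f,
\]
since the derivative terms $s\cdot(Xf)$ cancel. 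Taking $f$ constant equal to $a \in A$ gives $A$-linearity, and taking $f \in \mathcal{C}^\infty(M) \subset \mathcal{C}^\infty(M;A)$ gives $\mathcal{C}^\infty(M)$-linearity. By the standard tensoriality argument, $\Gamma(X)$ is therefore the action of a well-defined field of $A$-linear maps $V \to V$, one at each point of $M$.

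Next I would invoke compatibility to locate $\Gamma(X)$ inside $\mathfrak{u}(\mathcal{L}_A(V))$. Both $\nabla$ and the trivial connection $\partial$ are compatible with the metric, so subtracting the two identities $X\langle s, s'\rangle = \langle \nabla_X s, s'\rangle + \langle s, \nabla_X s'\rangle$ and $X\langle s, s'\rangle = \langle \partial_X s, s'\rangle + \langle s, \partial_X s'\rangle$ produces
\[
  \langle \Gamma(X)s, s'\rangle + \langle s, \Gamma(X)s'\rangle = 0
\]
for every pair of sections $s, s'$. Evaluating at a point $p$ with $s, s'$ ranging over constant sections shows that the pointwise operator $\Gamma(X)_p$ is adjointable with adjoint $-\Gamma(X)_p$, so $\Gamma(X)_p \in \mathfrak{u}(\mathcal{L}_A(V))$.

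Finally, smoothness of $\Gamma$ as a section of $T^*M \otimes \mathfrak{u}(\mathcal{L}_A(V))$ is inherited from smoothness of $\nabla$ and $\partial$: applying $\Gamma(X)$ to a basis of constant sections of the trivial bundle returns smooth sections, and these are the matrix-coefficients of $\Gamma$ in the given trivialization. I expect the only genuinely non-formal step to be the adjointability of $\Gamma(X)_p$, since bounded $A$-linear maps between Hilbert $A$-modules need not be adjointable in general; but here compatibility delivers a concrete candidate for the adjoint at no extra cost, so no approximation or completion argument is required.
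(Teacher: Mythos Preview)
Your argument follows the same outline as the paper's: define $\Gamma(X)=\nabla_X-\partial_X$, check tensoriality, conclude that $\Gamma$ is a pointwise operator, and then use compatibility of both connections to obtain skew-adjointness. On the last point you are actually more explicit than the paper, and your remark that compatibility hands you a concrete adjoint (so no separate boundedness/adjointability argument is needed) is exactly right.

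The one place where you are less careful than the paper is the passage from tensoriality to ``$\Gamma(X)$ is a field of operators on $V$''. You invoke ``the standard tensoriality argument'' and later speak of applying $\Gamma(X)$ to ``a basis of constant sections'' to read off ``matrix coefficients''. That phrasing presumes $V$ is free. For a general finitely generated projective Hilbert $A$-module there is no basis, and the textbook frame-based version of the tensoriality argument does not apply verbatim. This is precisely the step the paper isolates: it first treats the free case $V=A^n$ by the usual argument, and for arbitrary $V$ chooses a complement $V'$ with $V\oplus V'\cong A^n$, extends $\Gamma$ by zero on the $V'$-summand to get a tensorial operator on the free bundle, and then reads off pointwiseness from the free case. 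You should either insert this reduction, or replace the frame-based argument by the frame-free one (locality from $\mathcal C^\infty(M)$-linearity plus Hadamard's lemma for smooth Banach-space-valued maps vanishing at a point), which works directly for any $V$. Once pointwiseness is established, your smoothness argument should likewise be phrased via generators $pe_1,\dots,pe_n$ of $V=pA^n$ rather than a basis.
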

\begin{proof}
  It follows right from the definition of a connection that the map $\Gamma$, defined by $\Gamma(X)s:=\nabla_Xs-\partial_Xs$,
  is tensorial in the sense that $\Gamma(X)(s\cdot f)=\Gamma(X)s\cdot f$ for any $f\in\mathcal C^\infty(M)$. If $V=A^n$ is free,
  as in the vector space case it may be shown that this is exactly the condition for $\Gamma$ to define a 
  $\mathcal L_A(V)$-valued 1-form. If $V$ is not free, choose a finitely generated projective Hilbert $A$-module $V'$ such that
  $V\oplus V'\cong A^n$ for some $n$. Set $E'=M\times V'$, so that $E\oplus E'\cong M\times A^n$. For sections 
  $s\in\mathcal C^\infty(E),s'\in\mathcal C^\infty(E'),X\in\mathcal C^\infty(TM)$, let $\tilde\Gamma(X)(s+s')=\Gamma(X)s$. This is
  obviously still tensorial, so by the discussion above $\Gamma(X)s|_x=\tilde\Gamma(X)(s+0)|_x$ depends only on $s(x)$ for every
  $x\in M$. Using that $\nabla$ an $\partial$ are compatible, it is easy to show that 
  $\Gamma(X)_x\in\mathfrak u(\mathcal L_A(V))$.
\end{proof}

Now the \emph{curvature} induced by $\nabla$ is defined by the formula
\[
  \mathcal R^\nabla(X,Y)s=\nabla_X\nabla_Ys-\nabla_Y\nabla_Xs-\nabla_{[X,Y]}s
\]
for $X,Y\in\mathcal C^\infty(TM)$, $s\in\mathcal C^\infty(E)$. As in the case of vector bundles $E$, one immediately sees that
$\mathcal R^\nabla$ is tensorial in all three entries, i.~e. $\mathcal R^\nabla(gX,Y)=g\mathcal R^\nabla(X,Y)=\mathcal
R^\nabla(X,gY)$ and $\mathcal R^\nabla(X,Y)(s\cdot f)=(\mathcal R^\nabla(X,Y)s)\cdot f$ for $X,Y\in\mathcal C^\infty(TM)$,
$s\in\mathcal C^\infty(E)$, $f\in\mathcal C^\infty(M;A)$ and $g\in\mathcal C^\infty(M)$. This implies that $\mathcal R^\nabla$
defines a $\mathcal L_A(E)$-valued 2-form.

Given a smooth Hilbert module bundle $E\to M$ and a smooth map $f\colon N\to M$, the bundle $f^*E$ is obviously also a smooth
Hilbert module bundle over $N$. We denote the canonical bundle map by $\hat f\colon f^*E\to E$. Of course, every section
$s\in\mathcal C^\infty(E)$ induces a section $f^*s\in\mathcal C^\infty(f^*E)$ which is determined uniquely by the property that
$\hat f\circ f^*s=s\circ f$. Now if $E$ is equipped with a connection $\nabla$ then a connection $f^*\nabla$ on $f^*E$ is 
determined uniquely by the property that $(f^*\nabla)_X(f^*s)=f^*(\nabla_{f_*X}s)$.

Let $\gamma\colon(a,b)\to M$ be a smooth curve. 
A section $s$ of $E$ along $\gamma$ is a section of the pullback bundle $\gamma^*E$, and such a section is called \emph{parallel}
if $(\gamma^*\nabla)_{\partial_t}s=0$ on $(a,b)$. As for ordinary vector bundles, for every $t\in(a,b)$ and every
$s(t)\in(\gamma^*E)_t$, there exists a parallel section $s$ along $\gamma$ which coincides with $s(t)$ at point $t$.
Thus, if $\gamma\colon[0,1]\to M$ is a smooth curve connecting $p=\gamma(0)$ and $q=\gamma(1)$, we may 
define a parallel transport map
$T_\gamma\colon E_p\to E_q$ by mapping an element $v\in E_p$ to $\hat\gamma(s(1))$ where $s\colon[0,1]\to\gamma^*E$ is the unique
parallel section along $\gamma$ which satisfies $\hat\gamma(s(0))=v$.

Now if $M$ is triangulated (in this paper, a triangulated manifold will always be smoothly triangulated in the sense that the
simplices are smoothly embedded) and if $|\sigma|\subset M$ is the embedding of a simplex, then we may trivialize $E|_{|\sigma|}$ 
over $|\sigma|$ by choosing an isomorphism of $V$ with the fibre over the barycenter $b_\sigma\in|\sigma|$ and composing this 
isomorphism with the parallel transport outwards along curves of the form $t\mapsto tx+(1-t)b_\sigma$ for $x\in|\partial\sigma|$. 
Here the barycenter $b_\sigma\in|\sigma|$ is the point $b_\sigma=\sum_{v\in\sigma}\frac 1{\#\sigma}\cdot v$. This 
procedure then gives a trivialization $\Phi_\sigma\colon\Delta^n\times V\to j_\sigma^*E$, and it is easy to see that this 
trivialization via parallel transport preserves the inner product if $\nabla$ is compatible and if the isomorphism at $b$
preserves the inner product. Thus, in this case the transition functions take their values in $U(\mathcal L_A(V))$.

\begin{thm}\label{thm:bundles-with-connection-are-epsilon-small}
  Let $M$ be a triangulated Riemannian manifold. Then there is a constant $c(M)>0$ such that the following holds. Let $E\to M$ be 
  a Hilbert $A$-module bundle over an arbitrary C*-algebra $A$. Assume that $E$ is equipped with a compatible connection, and let
  $\Phi_\sigma$ be trivializations via parallel transport as defined above. Assume that
  \[
    \|\mathcal R^\nabla\|_g=\sup_{\substack{X\wedge Y\in\Lambda^2TM\\\|X\wedge Y\|_g\leq 1}}\|\mathcal
    R^\nabla(X,Y)\|_{\text{op}}\leq\epsilon.
  \]
  Then the $\Phi_\sigma$ constitute an $c(M)\epsilon$-flat family of trivializations on $E$.
\end{thm}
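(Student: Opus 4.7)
The plan is to identify the transition function $\Psi_{\rho\subset\sigma}$ with a parallel transport operator around a concrete loop inside $|\sigma|$, and then to apply the standard curvature--area estimate. Let $\iota_\sigma\colon V\to E_{b_\sigma}$ denote the unitary used to build $\Phi_\sigma$, and write $T_\alpha$ for parallel transport along a piecewise smooth curve $\alpha$. Unwinding the definition gives $\Phi_\sigma(x,v)=T_{r_\sigma(x)}\iota_\sigma(v)$, where $r_\sigma(x)$ is the radial segment from $b_\sigma$ to $x$ inside $|\sigma|$, so that for $x_0,x_1\in|\rho|$,
\[
  \Psi_{\rho\subset\sigma}(x_0)^{-1}\Psi_{\rho\subset\sigma}(x_1) = \iota_\sigma^{-1}\circ T_\gamma\circ\iota_\sigma,
\]
where $\gamma$ is the piecewise smooth loop $b_\sigma\to x_1\to b_\rho\to x_0\to b_\sigma$, each leg being a radial segment in $|\sigma|$ or in $|\rho|\subset|\sigma|$. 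Since $\iota_\sigma$ is unitary, bounding the Lipschitz constant of $\Psi_{\rho\subset\sigma}$ reduces to estimating $\|T_\gamma-\id\|_{\text{op}}$ in terms of $\|x_0-x_1\|$.

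To do this, I fill $\gamma$ by the piecewise smooth disk $D\colon[0,1]^2\to|\sigma|$ whose slice at parameter $t$ is the broken radial path $b_\sigma\to y_t\to b_\rho$, with $y_t=(1-t)x_0+tx_1$ interpreted as a convex combination in $\Delta^n$ via $j_\sigma^{-1}$. Because $D$ sweeps $y_t$ across an interval of length $\|x_0-x_1\|$ while its radial parameter ranges over segments of length at most $\diam(|\sigma|)$, one obtains
\[
  \area(D)\leq c_1(\sigma)\cdot\diam(|\sigma|)\cdot\|x_0-x_1\|,
\]
where $c_1(\sigma)$ depends only on the Lipschitz constants of the smooth embedding $j_\sigma$. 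The standard curvature estimate for parallel transport along the boundary of a filled disk then yields
\[
  \|T_\gamma-\id\|_{\text{op}}\leq\|\mathcal R^\nabla\|_g\cdot\area(D)\leq\epsilon\cdot c_1(\sigma)\cdot\diam(|\sigma|)\cdot\|x_0-x_1\|,
\]
and taking $c(M)$ to be the supremum of $c_1(\sigma)\cdot\diam(|\sigma|)$ over all simplices, finite because $M$ is compact, completes the argument.

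The main point to verify carefully is that the curvature--area estimate survives passage to Hilbert $A$-module bundles. This should be essentially routine: the standard vector-bundle proof differentiates in $t$ the family of parallel transport operators along the $t$-slices of $D$ and identifies the derivative with an integral of $\mathcal R^\nabla$ over the swept region. Every step uses only that parallel transport is an adjointable unitary depending smoothly on the path and that $\mathcal R^\nabla$ is an $\mathcal L_A(E)$-valued 2-form, so no modification is needed in the $A$-linear setting.
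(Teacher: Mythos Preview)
Your argument is correct and is essentially the paper's proof. The paper also identifies $\Psi_{\rho\subset\sigma}(x)\Psi_{\rho\subset\sigma}(y)^{-1}$ with parallel transport around the quadrilateral $b_\sigma\to x\to b_\rho\to y\to b_\sigma$ (after conjugation by a fixed unitary) and bounds it via the curvature--area estimate; the only cosmetic difference is that the paper explicitly inserts the chord $T_{y,x}$ to split the quadrilateral into the two flat triangles $(y,x,b_\rho)$ and $(y,x,b_\sigma)$ and applies the estimate to each, whereas your single piecewise-smooth disk $D$ decomposes into exactly these two triangles along $s=\tfrac12$. Your remark that the curvature--area estimate goes through unchanged for Hilbert $A$-module bundles is also what the paper does (it is proved carefully in the appendix as Proposition~\ref{prop:transport-along-small-curves-is-small}).
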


For the proof of theorem \ref{thm:bundles-with-connection-are-epsilon-small}, we will need that parallel transport along loops
which bound a small area is close to the identity. The proof of this statement is not so easily found in the literature, in 
particular not for Hilbert module bundles, so I will give it in appendix \ref{app:parallel-transport-curvature}. The precise
formulation is the following.

\begin{prop}\label{prop:transport-along-small-curves-is-small}
  Let $f\colon[0,1]\times[0,1]\to M$ be a smooth map. We denote parallel transport in $E$ along the curve 
  $f(\partial([0,1]\times[0,1]))$ by the symbol $P_{\partial f}$. Further, we consider a Hilbert $A$-module
  bundle $E\to M$ with compatible connection $\nabla$, and the associated curvature tensor $\mathcal R^\nabla$. Then
  \[
    \|P_{\partial f}-\id\|\leq\int_0^1\int_0^1\|\mathcal R^\nabla(\partial_sf(s,t)\wedge\partial_tf(s,t))\|\,ds\,dt.
  \]
\end{prop}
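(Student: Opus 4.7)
The goal is a classical curvature-to-holonomy estimate, extended from the Hermitian to the Hilbert module setting. My plan is to pull the bundle back to the square $Q=[0,1]^2$, differentiate a one-parameter family of rectangular holonomies, and recognise the derivative as a transport-integral of the curvature. For the reduction, I would set $\tilde E=f^*E$, $\tilde\nabla=f^*\nabla$, $\tilde{\mathcal R}=f^*\mathcal R^\nabla$; since parallel transport commutes with pullback along smooth maps, it suffices to estimate the $\tilde\nabla$-holonomy around $\partial Q$. The bundle $\tilde E$ is trivializable over $Q$ — either directly by contractibility, or, if one does not wish to invoke such a statement for projective Hilbert module bundles, by subdividing $Q$ into a fine grid of subrectangles whose $f$-images lie in trivializing open sets (which exist by definition of a smooth Hilbert module bundle), applying the estimate on each piece, and multiplying the holonomies (using unitarity to control the resulting operator norm by a sum). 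In a fixed trivialization one has $\tilde\nabla=d+\omega$ with $\omega$ a smooth $\mathfrak u(\mathcal L_A(V))$-valued 1-form on $Q$ satisfying $d\omega+\omega\wedge\omega=\tilde{\mathcal R}$; parallel transport is then the solution of an ODE in the Banach $\ast$-algebra $\mathcal L_A(V)$, and is unitary-valued because $\omega$ is skew-adjoint.

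For the variation formula, I would define $H(s)\in U(\mathcal L_A(V))$ for $s\in[0,1]$ as parallel transport around $\partial([0,s]\times[0,1])$ based at $(0,0)$ and oriented counterclockwise. Then $H(0)=\id$, $H(1)$ agrees with $P_{\partial f}$ in the chosen trivialization at $(0,0)$, and $s\mapsto H(s)$ is smooth. Writing $H(s)=\ell^{-1}b_s^{-1}r_sa_s$, where $a_s,b_s$ are horizontal edge-transports at $t=0$ and $t=1$, $r_s$ is the vertical right-edge transport at parameter $s$, and $\ell$ is the (fixed) left-edge transport, I would differentiate in $s$ using the first-order ODE for each $a_s,b_s$ and Duhamel's formula for the $s$-variation of $r_s$ (which is a $t$-ODE whose coefficient depends on $s$). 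The corner values $\omega(\partial_s)\lvert_{(s,0)},\omega(\partial_s)\lvert_{(s,1)}$ contributed by $\dot a_s,\dot b_s$ can be rewritten via the fundamental theorem of calculus as $\int_0^1\partial_t\omega(\partial_s)\,dt$, and combine with the $\int_0^1\partial_s\omega(\partial_t)\,dt$ coming from $\dot r_s$ together with the quadratic $\omega\wedge\omega$ cross-terms produced by the unitary conjugations; the result reassembles tensorially into the curvature. The outcome is
\[
  \dot H(s)=\Psi(s)\,H(s),\qquad \|\Psi(s)\|\leq\int_0^1\|\tilde{\mathcal R}(\partial_s,\partial_t)(s,t)\|\,dt,
\]
where the norm bound uses that conjugation by unitaries in $\mathcal L_A(V)$ preserves the operator norm. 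Since $H$ is unitary-valued, $\|\dot H(s)\|=\|\Psi(s)\|$, whence
\[
  \|P_{\partial f}-\id\|=\|H(1)-H(0)\|\leq\int_0^1\|\dot H(s)\|\,ds\leq\int_0^1\!\!\int_0^1\|\mathcal R^\nabla(\partial_sf,\partial_tf)\|\,dt\,ds.
\]

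The main obstacle is the algebraic bookkeeping in the variation step: the raw derivatives $\partial_s\omega(\partial_t)$ and $\partial_t\omega(\partial_s)$ are not tensorial on their own, and one must carefully gather them with the commutator terms produced by differentiating the product of four transport operators so that exactly $d\omega+\omega\wedge\omega=\tilde{\mathcal R}$ emerges. In the finite-dimensional Hermitian case this is a textbook computation; the key checks in the Hilbert module case are that (i) $\mathcal L_A(V)$ is a Banach $\ast$-algebra in which ODE existence/uniqueness and Duhamel's formula hold, (ii) compatibility of $\nabla$ forces $\omega\in\mathfrak u(\mathcal L_A(V))$ so all transports are unitary and conjugations are isometries, and (iii) $\mathcal R^\nabla$ is tensorial and $\mathcal L_A(V)$-valued by the proposition preceding the statement. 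These are all available here, so the classical proof goes through essentially verbatim.
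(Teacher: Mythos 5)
Your proof is correct, but the route is genuinely different from the paper's. You trivialize the pulled-back bundle over $Q=[0,1]^2$ (with a sensible fallback via grid subdivision in case one hesitates to assert that projective Hilbert module bundles trivialize over contractible spaces), write $\tilde\nabla=d+\omega$, define a one-parameter family of rectangular holonomies $H(s)$, and compute $\dot H(s)$ by differentiating each of the four edge transports, using Duhamel's formula for the $s$-variation of the vertical edge and then reassembling $\partial_t\omega(\partial_s)$, $\partial_s\omega(\partial_t)$ and the quadratic commutator cross-terms into $d\omega+\omega\wedge\omega$. This is the standard ``connection-form/ODE'' proof, and as you note yourself the only real burden is the algebraic bookkeeping at that last step; the analytic infrastructure you list (existence/uniqueness and Duhamel in the Banach algebra $\mathcal L_A(V)$, skew-adjointness of $\omega$, tensoriality of $\mathcal R^\nabla$) is indeed available, so it goes through.

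The paper's argument sidesteps the trivialization and the Duhamel bookkeeping entirely. It fixes a unit vector $x\in E_{f(0,0)}$ and builds a specific parallel frame $X(s,t)$ (parallel in $s$ along $t=0$, then parallel in $t$ along each vertical line), together with transport operators $P_{(s,t)}$ to the final point. Applying lemma \ref{lem:parallel-transport-along-curves} twice, it writes $P_{(0,0)}(P_{\partial f}x-x)$ directly as the iterated integral $\int_0^1\int_0^1 P_{(s,t)}\nabla_{\partial_tf}\nabla_{\partial_sf}X\,dt\,ds$; because $\nabla_{\partial_tf}X\equiv 0$ by construction, the integrand is literally $P_{(s,t)}\mathcal R^\nabla(\partial_tf,\partial_sf)X$, and the norm bound follows at once from unitarity of transport. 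The two approaches buy different things: yours is the textbook computation in a fixed gauge, immediately familiar to anyone who has seen the Ambrose--Singer style argument, but it requires either a trivialization statement or a subdivision argument and some care with the noncommutative reassembly; the paper's is coordinate-free and makes the curvature appear tautologically rather than by cancellation, at the modest cost of the auxiliary lemma on differentiating a transported section. Both are complete proofs of the same estimate.
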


Using this, we can prove theorem \ref{thm:bundles-with-connection-are-epsilon-small}.

\begin{proof}[Proof of theorem \ref{thm:bundles-with-connection-are-epsilon-small}]
  Let $\rho\subset\sigma$ be simplices of $M$. We want to show that the transition function
  $\Psi_{\rho\subset\sigma}\colon|\rho|\to U(\mathcal L_A(V))$ is Lipschitz with Lipschitz constant bounded by a multiple of
  $\epsilon$. Since multiplication with a constant unitary does not change the Lipschitz constant of a map, we may assume that 
  $\Phi_\rho(b_\rho,v)$ is given by parallel transport of $\Phi_\sigma(b_\sigma,v)$ along the curve 
  $t\mapsto tb_\rho+(1-t)b_\sigma$.

  For every pair of points $a,b\in|\sigma|$ we denote by $T_{a,b}\colon E_a\to E_b$ the parallel transport map along the straight
  line segment $t\mapsto tb+(1-t)a$. Note that $T_{a,b}$ preserves the inner product since the connection is compatible, and that
  $T_{a,b}^{-1}=T_{b,a}$. Then we have that
  \begin{align*}
    \Phi_\sigma(p,v)&=T_{b_\sigma,p}\Phi_\sigma(b_\sigma,v),\\
    \Phi_\rho(p,v)&=T_{b_\rho,p}T_{b_\sigma,b_\rho}\Phi_\sigma(b_\sigma,v)
  \end{align*}
  for every point $p\in|\rho|$. This immediately implies that
  \[
    \Psi_{\rho\subset\sigma}(p)=\Phi_\rho(p,\cdot)^{-1}\Phi_\sigma(p,\cdot)
    =\Phi_\sigma(b_\sigma,\cdot)^{-1}T_{b_\rho,b_\sigma}T_{p,b_\rho}T_{b_\sigma,p}\Phi_\sigma(b_\sigma,\cdot).
  \]

  Now consider arbitrary points $x,y\in|\rho|$. The above equations imply that 
  \begin{align*}
    \|\Psi_{\rho\subset\sigma}(x)-\Psi_{\rho\subset\sigma}(y)\|
    &=\|\Psi_{\rho\subset\sigma}(x)\Psi_{\rho\subset\sigma}(y)^{-1}-\id\|\\
    &=\|\Phi_\sigma(b_\sigma,\cdot)^{-1}T_{b_\rho,b_\sigma}T_{x,b_\rho}T_{b_\sigma,x}T_{y,b_\sigma}T_{b_\rho,y}T_{b_\sigma,b_\rho}
    \Phi_\sigma(b_\sigma,\cdot)-\id\|\\
    &=\|T_{x,b_\rho}T_{b_\sigma,x}T_{y,b_\sigma}T_{b_\rho,y}-\id\|\\
    &=\|T_{b_\rho,y}T_{x,b_\rho}T_{b_\sigma,x}T_{y,b_\sigma}-\id\|\\
    &=\|(T_{b_\rho,y}T_{x,b_\rho}T_{y,x})(T_{x,y}T_{b_\sigma,x}T_{y,b_\sigma})-\id\|\\
    &=\|T_{b_\rho,y}T_{x,b_\rho}T_{y,x}-T_{b_\sigma,y}T_{x,b_\sigma}T_{y,x}\|\\
    &\leq\|T_{b_\rho,y}T_{x,b_\rho}T_{y,x}-\id\|+\|T_{b_\sigma,y}T_{x,b_\sigma}T_{y,x}-\id\|
  \end{align*}
  since $T_{b_\rho,b_\sigma}$ and $\Phi_\sigma(b_\sigma,\cdot)$ preserve the norm. Thus, we have to show that transport along
  triangles of the form $\Delta(y,x,b_\rho)$ is close to the identity whenever $x$ and $y$ are close. This is true
  because those triangles obviously bound disks of a small area, so we may use proposition
  \ref{prop:transport-along-small-curves-is-small} to obtain the result. To make this rigorous, one only has to note that, by an
  easy compactness argument, we may assume that the metric on the simplex $|\sigma|$ equals the standard metric. Then, the
  triangle bounds an area of at most $\frac 12d(x,y)\cdot\diam(\Delta^n)=\frac 12\sqrt 2\cdot d(x,y)$.
\end{proof}

\section{The trivialization lemma}

\label{sec:trivialization-lemma}

The goal of this section is to prove the trivialization lemma which states that an $\epsilon$-flat bundle over a simply connected
space is trivial if $\epsilon$ is small enough. This is the basic result which enables us to extend $\epsilon$-flat bundles to
larger $\Delta$-sets under certain conditions. Namely, one can apply the trivialization lemma to the sphere $S^{n-1}$ if $n>2$ to
trivialize the bundle over the boundary $\partial\Delta^n$ of a simplex and thus to extend the bundle over the whole simplex
$\Delta^n$. 

\subsection{Transport in the 1-skeleton}

We first want to show that for $\epsilon$-flat bundles, transport along contractible simplicial loops is close to the identity, in
analogy with proposition \ref{prop:transport-along-small-curves-is-small}. This will be one of the main ingredients in the proof 
of the trivialization lemma. 

For any simplicial complex $X$, we define the \emph{simplicial path category} $\mathcal P_X$ as follows: Objects of $\mathcal P_X$
are the vertices of $X$, and morphisms from $v_0$ to $v_k$ are \emph{simplicial paths}, i.~e. tuples $(v_0,\ldots,v_k)$ such that 
$\{v_i,v_{i+1}\}\in X_1$. One should imagine simplicial paths as concatenations of the paths $t\mapsto (1-t)v_i+tv_{i+1}$. The
composition of two simplicial paths $\Gamma=(v_0,\ldots,v_k)$ and $\Gamma'=(v_k,\ldots,v_{k+l})$ is to be the path
$\Gamma*\Gamma'=(v_0,\ldots,v_{k+l})$.

Let $E\to|X|$ be an $\epsilon$-flat bundle, and let $\sigma=\{v_0,v_1\}\in X_1$ be an edge. Then \emph{transport along 
$(v_0,v_1)$} is the isomorphism of Hilbert $A$-modules
\[
  T_{(v_0,v_1)}=\Phi_\sigma(v_1,\cdot)\Phi_\sigma(v_0,\cdot)^{-1}\colon E_{v_0}\to E_{v_1}.
\]
If $\Gamma=(v_0,\ldots,v_k)$ is a simplicial path then \emph{transport along $\Gamma$} is defined as
\[
  T_\Gamma=T_{(v_{k-1},v_k)}\cdots T_{(v_0,v_1)}\colon E_{v_0}\to E_{v_k}.
\]
Of course, if $\Gamma=(\Gamma_1,\Gamma_2)$ then $T_\Gamma=T_{\Gamma_2}T_{\Gamma_1}$, so the associations $v\mapsto E_v$,
$\Gamma\mapsto T_\Gamma$ define a functor $\mathcal P_X\to\mathcal M_A$ into the category of Hilbert $A$-modules and Hilbert
$A$-module isomorphisms. 

We want to analyse the transport along contractible simplicial loops $\Gamma$. We first consider the special case that 
$\Gamma$ is the boundary curve of a 2-simplex.

\begin{prop}\label{prop:transport-entlang-2-simplizes}
  Let $\Gamma=(v_0,v_1,v_2,v_0)$ be the simplicial loop along the boundary of a 2-simplex $\sigma=\{v_0,v_1,v_2\}\in X_2$. If $E$ 
  is an $\epsilon$-flat bundle over $X$ with $\epsilon\leq 1/\sqrt 2$, then
  \[
    \|T_\Gamma-\id\|\leq\epsilon\cdot 7\sqrt 2.
  \]
\end{prop}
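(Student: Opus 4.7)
The plan is to use the trivialization $\Phi_\sigma$ over the $2$-simplex $\sigma=\{v_0,v_1,v_2\}$ as a common reference, express each edge transport $T_{(v_i,v_j)}$ in terms of $\Phi_\sigma$ and the edge-to-face transition functions $\Psi_{\{v_i,v_j\}\subset\sigma}$, and exploit the $\epsilon$-Lipschitz hypothesis on those transition functions.

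Concretely, the defining relation $\Phi_{\{v_i,v_j\}}(x,\cdot)=\Phi_\sigma(x,\cdot)\,\Psi_{\{v_i,v_j\}\subset\sigma}(x)^{-1}$ lets me rewrite
\[
  T_{(v_i,v_j)}=\Phi_{\{v_i,v_j\}}(v_j,\cdot)\Phi_{\{v_i,v_j\}}(v_i,\cdot)^{-1}
  =\Phi_\sigma(v_j,\cdot)\,\Psi_{\{v_i,v_j\}\subset\sigma}(v_j)^{-1}\Psi_{\{v_i,v_j\}\subset\sigma}(v_i)\,\Phi_\sigma(v_i,\cdot)^{-1}.
\]
Writing $A_i=\Phi_\sigma(v_i,\cdot)$ (a unitary isomorphism $V\to E_{v_i}$) and composing the three edge transports along $\Gamma=(v_0,v_1,v_2,v_0)$, the factors $A_1^{-1}A_1$ and $A_2^{-1}A_2$ cancel in pairs, yielding
\[
  A_0^{-1}T_\Gamma A_0=U_{02}\,U_{12}\,U_{01}, \qquad
  U_{ij}:=\Psi_{\{v_i,v_j\}\subset\sigma}(v_j)^{-1}\Psi_{\{v_i,v_j\}\subset\sigma}(v_i).
\]
Each $U_{ij}$ is a unitary in $U(\mathcal L_A(V))$.

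Next I invoke the $\epsilon$-Lipschitz hypothesis together with the fact that, in the chosen metric, the two vertices of an edge $|\{v_i,v_j\}|$ are at distance $\sqrt 2$ (since $j_{\{v_i,v_j\}}\colon\Delta^1\to|\{v_i,v_j\}|$ is an isometry and the vertices of $\Delta^1\subset\R^2$ are $\sqrt 2$ apart). This gives
\[
  \|U_{ij}-\id\|=\|\Psi_{\{v_i,v_j\}\subset\sigma}(v_i)-\Psi_{\{v_i,v_j\}\subset\sigma}(v_j)\|\leq\sqrt{2}\,\epsilon
\]
for each edge. A telescoping estimate, expanding $U_{02}U_{12}U_{01}-\id=U_{02}U_{12}(U_{01}-\id)+U_{02}(U_{12}-\id)+(U_{02}-\id)$ and using unitarity of the left-hand factors, yields
\[
  \|T_\Gamma-\id\|=\|A_0^{-1}T_\Gamma A_0-\id\|=\|U_{02}U_{12}U_{01}-\id\|\leq 3\sqrt{2}\,\epsilon,
\]
which is even a bit stronger than the claimed constant $7\sqrt 2$.

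No step is genuinely difficult; the only real care is the book-keeping in verifying the telescoping cancellation. The hypothesis $\epsilon\leq 1/\sqrt 2$ is not needed for the norm estimate itself but ensures each $\|U_{ij}-\id\|\leq 1$, which is what makes the linear estimate meaningful and is the natural regime in which one wants to view $T_\Gamma$ as a small perturbation of the identity. The slack between my $3\sqrt 2$ and the paper's $7\sqrt 2$ presumably allows the author a cruder intermediate bookkeeping, but either constant suffices for the downstream use in the trivialization lemma.
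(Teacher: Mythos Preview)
Your proof is correct and follows exactly the same route as the paper: rewrite each edge transport via $\Phi_\sigma$ and the transition functions $\Psi_{\{v_i,v_j\}\subset\sigma}$, conjugate by $\Phi_\sigma(v_0,\cdot)$, and bound each factor $U_{ij}$ using the $\epsilon$-Lipschitz hypothesis and the edge length $\sqrt 2$. The only difference is the final step: the paper packages the product estimate into a general lemma (Lemma~\ref{lem:very-useful-lemma}, expanding $(B_i-\id)+\id$ and counting $2^3-1=7$ terms), whereas your telescoping $U_{02}U_{12}U_{01}-\id=U_{02}U_{12}(U_{01}-\id)+U_{02}(U_{12}-\id)+(U_{02}-\id)$ exploits unitarity of the prefactors to get the sharper constant $3$. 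Your observation about the role of the hypothesis $\epsilon\leq 1/\sqrt 2$ is also apt: the paper needs it precisely so that $\epsilon\sqrt 2\leq 1$, which is the standing assumption in Lemma~\ref{lem:very-useful-lemma}; your telescoping argument does not require it.
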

\begin{proof}
  By definition of the transition function we have that
  \[
    \Phi_\rho(x,\cdot)=\Phi_\sigma(x,\cdot)\circ\Psi_{\rho\subset\sigma}(x,\cdot)^{-1}
  \]
  for $\rho=\{v_i,v_j\}\subset\sigma$. It follows that 
  \[
    T_{(v_i,v_j)}=\Phi_\sigma(v_j,\cdot)\Psi_{\rho\subset\sigma}(v_j)^{-1}\Psi_{\rho\subset\sigma}(v_i)
    \Phi_\sigma(v_i,\cdot)^{-1}.
  \]

  By definition we have that $T_\Gamma=T_{(v_2,v_0)}T_{(v_1,v_2)}T_{(v_0,v_1)}$. Together it follows that 
  \begin{align*}
    T_\Gamma=\Phi_\sigma(v_0,\cdot)&\cdot(\Psi_{\{v_2,v_0\}\subset\sigma}(v_0)^{-1}
    \Psi_{\{v_2,v_0\}\subset\sigma}(v_2))\cdot\phantom{x}\\
    &\cdot(\Psi_{\{v_1,v_2\}\subset\sigma}(v_2)^{-1}\Psi_{\{v_1,v_2\}\subset\sigma}(v_1))\cdot\phantom{x}\\
    &\cdot(\Psi_{\{v_0,v_1\}\subset\sigma}(v_1)^{-1}\Psi_{\{v_0,v_1\}\subset\sigma}(v_0))\cdot
    \Phi_\sigma(v_0,\cdot)^{-1}
  \end{align*}
  holds. Since the $\Psi_{\rho\subset\sigma}$ are $\epsilon$-Lipschitz maps and the vertices have distance $\sqrt 2$ we conclude
  that
  \[
    \|\Psi_{\rho\subset\sigma}(v_j)^{-1}\Psi_{\rho\subset\sigma}(v_i)-\id\|\leq\epsilon\sqrt 2.
  \]
  Now we apply the following very useful lemma.

  \begin{lem}\label{lem:very-useful-lemma}
    Let $0<\epsilon\leq 1$. Let $V_1,\ldots,V_n$ be Hilbert $A$-modules. Let furthermore
    \[
      P=A_nB_nA_{n-1}B_{n-1}\cdots A_1B_1\colon V_1\to V_1
    \]
    where all $A_i\colon V_i\to V_{i+1}$ are isomorphisms of Hilbert $A$-modules, $V_{n+1}=V_1$, $A_n\cdots A_1=\id$, and where
    all the $B_i\colon V_i\to V_i$ are linear with $\|B_i-\id\|<\epsilon$. Then also $\|P-\id\|<c\epsilon$, and $c=c(n)=2^n-1$
    depends only on $n$.
  \end{lem}
  \begin{proof}
    Replace $B_i$ by $(B_i-\id)+\id$ in the definition of $P$ and expand. Then one sees that $P-\id$ is the sum of $2^n-1$ linear
    maps having norm bounded by $\epsilon$, since $\epsilon\leq 1$. The claim follows using the triangle inequality.
  \end{proof}

  To finish the proof of proposition \ref{prop:transport-entlang-2-simplizes} we can use the lemma with
  $n=3$. Thus we get $\|T_\Gamma-\id\|\leq c\epsilon\sqrt 2$ with $c=2^3-1=7$.
\end{proof}

Of course, the constants used in lemma \ref{lem:very-useful-lemma} and proposition \ref{prop:transport-entlang-2-simplizes} are
somewhat arbitrary: If we require other bounds for $\epsilon$, then we get other constants. However, since we will only
consider $\epsilon$-flat bundles when $\epsilon$ is small, this extra flexibility is not important here.

We recall the following well-known description of the fundamental group of $X$. The \emph{homotopy simplicial path category} 
$\mathcal P_X'$ is the quotient category of $\mathcal P_X$ modulo the congruence relation 
generated by identifying $(v_0,v_1,v_0)$ and $(v_0)$ for every $\{v_0,v_1\}\in X_1$, and by identifying $(v_0,v_1,v_2,v_0)$ and 
$(v_0)$ for every $\{v_0,v_1,v_2\}\in X_2$. 

The \emph{path groupoid} of a space $X$ is the category $\Pi_X$ which has as objects the points in $X$ and as morphisms homotopy
classes of paths $[0,1]\to X$ relative to the endpoints, where composition is given as concatenation. In particular,
$\pi_1(X,x_0)=\Pi_X(x_0,x_0)$. 

\begin{prop}\label{prop:presentation-of-fundamental-group}
  If $X$ is a simplicial complex, the natural functor $\mathcal P_X\to\Pi_X$ descends to a fully faithful functor 
  $\mathcal P_X'\to\Pi_X$. In particular, $\pi_1(X,x_0)\cong\mathcal P_X'(x_0,x_0)$ for every vertex $x_0$.\qed
\end{prop}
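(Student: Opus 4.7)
The plan is to establish the proposition in three stages: first check that the functor $\mathcal{P}_X \to \Pi_X$ descends to the quotient, then prove fullness via simplicial approximation, and finally prove faithfulness via a two-dimensional simplicial approximation argument. Descent is routine: the continuous path associated to the loop $(v_0, v_1, v_0)$ traverses the edge $|\{v_0, v_1\}|$ and immediately retraces it, which is nullhomotopic rel endpoints inside that edge, and the path associated to $(v_0, v_1, v_2, v_0)$ bounds the contractible 2-simplex $|\{v_0, v_1, v_2\}|$. Both generating relations therefore map to identity morphisms in $\Pi_X$, so the functor factors through $\mathcal{P}_X'$.

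For fullness on the morphism set $\mathcal{P}_X'(v, w) \to \Pi_X(v, w)$, let $\gamma \colon [0,1] \to |X|$ be a continuous path from $v$ to $w$. By the simplicial approximation theorem, after a sufficiently fine subdivision of $[0,1]$ at points $0 = t_0 < t_1 < \cdots < t_n = 1$, there exists a simplicial map $\tilde\gamma$ from the subdivided interval to $X$, homotopic to $\gamma$ rel endpoints and satisfying $\tilde\gamma(t_0) = v$, $\tilde\gamma(t_n) = w$. Simpliciality forces $\tilde\gamma(t_i)$ and $\tilde\gamma(t_{i+1})$ to span a simplex of $X$ of dimension at most one; after collapsing consecutive repetitions, the tuple $(\tilde\gamma(t_0), \ldots, \tilde\gamma(t_n))$ defines a genuine simplicial path whose image in $\Pi_X$ is the class of $\gamma$.

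For faithfulness, suppose $\Gamma$ and $\Gamma'$ are simplicial paths from $v$ to $w$ whose induced continuous paths are homotopic rel endpoints, and fix a homotopy $H \colon [0,1]^2 \to |X|$. The idea is to triangulate the square finely enough that simplicial approximation applies, while keeping the bottom and top edges subdivided compatibly with $\Gamma$ and $\Gamma'$, thereby producing a simplicial map $\tilde H$ homotopic to $H$ rel boundary and restricting to $\Gamma$ and $\Gamma'$ on the bottom and top edges, respectively. Sweeping the bottom edge upward through the 2-simplices of the triangulation then yields a finite sequence of simplicial paths connecting $\Gamma$ to $\Gamma'$, in which consecutive terms differ by replacing one edge of a path with the other two edges of an adjacent 2-simplex of the triangulation. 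Because $\tilde H$ sends each such 2-simplex into a single simplex of $X$, the corresponding boundary loop is either degenerate (reducible to the identity using the first generating relation after absorbing retraces) or genuinely traces the boundary of a 2-simplex of $X$ (killed by the second generating relation), so $\Gamma \sim \Gamma'$ in $\mathcal{P}_X'$.

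The hard part will be the combinatorial bookkeeping in the faithfulness argument: the triangulation of $[0,1]^2$ must be organized so that the horizontal sweep through successive 2-simplices yields genuine simplicial paths at every stage, and one must carefully handle 2-simplices of the triangulation whose three vertices are not mapped to three distinct vertices of $X$, for which the boundary loop is degenerate rather than literally of the form $(v_0, v_1, v_2, v_0)$. These points are classical in the theory of the edge-path group of a simplicial complex, and the combinatorial verifications, though standard, are tedious.
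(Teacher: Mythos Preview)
The paper does not actually prove this proposition: it is stated with a trailing \qed\ and introduced as ``the following well-known description of the fundamental group of $X$'', i.e.\ it is simply recalled as the classical identification of $\pi_1$ with the edge-path group of a simplicial complex (as in Spanier, to which the paper already refers for barycentric subdivision). So there is no proof in the paper to compare against.

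Your sketch is the standard argument and is correct in outline. A couple of small points worth tightening if you intend this as a complete proof rather than a pointer to the literature. First, in the faithfulness step you invoke a relative version of simplicial approximation: you want the approximation $\tilde H$ to agree with $\Gamma$ and $\Gamma'$ on the horizontal edges. The usual statement requires subdividing the whole square, including those edges; the restriction of $\tilde H$ to a subdivided horizontal edge is then a simplicial map that, after collapsing repetitions, yields a simplicial path equivalent in $\mathcal P_X'$ to the original $\Gamma$ (respectively $\Gamma'$). This is fine, but it is an extra step you should make explicit rather than fold into ``subdivided compatibly''. Second, and relatedly, your sweep argument should allow for simplicial paths with repeated vertices at intermediate stages (since a simplicial map on a subdivided interval may well stutter), which your definition of $\mathcal P_X$ forbids; the cleanest fix is to work throughout with tuples in which consecutive repetitions are permitted and to observe that collapsing them is already implemented by the first generating relation read as $(v_0,v_1,v_0)\sim(v_0)$, interpreted liberally, or to insert an auxiliary normalisation step after each elementary move. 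None of this changes the substance; it is exactly the ``tedious bookkeeping'' you flag at the end.
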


This implies that if $\Gamma\in\mathcal P_X(x_0,x_0)$ is a contractible simplicial loop then $\Gamma$ arises by a finite sequence
of omissions or insertions of pieces of the form $(v_0,v_1,v_0)$ where $\{v_0,v_1\}\in X_1$, or of pieces of the form
$(v_0,v_1,v_2,v_0)$ where $\{v_0,v_1,v_2\}\in X_2$. Note that omissions or insertions of the first type do not change the
transport map. The \emph{homotopical complexity} $\hc(\Gamma)$ of such a contractible simplicial loop $\Gamma$ is the minimum 
number of insertions or omissions of the second form which is needed to obtain $\Gamma$ from the empty loop.

\begin{thm}\label{thm:transport-along-contractible-loops}
  Let $\Gamma$ be a contractible simplicial loop in $X$ and $n=\hc(\Gamma)$ its homotopical complexity as defined above. Then
  there are constants $c(n),\delta(n)>0$, depending only on $n$, such that for every $\epsilon$-flat bundle over $X$ with
  $\epsilon\leq\delta(n)$ the transport along $\Gamma$ satisfies the inequality
  \[
    \|T_\Gamma-\id\|\leq c(n)\epsilon.
  \]
\end{thm}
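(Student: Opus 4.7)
The plan is to exploit the definition of homotopical complexity directly: if $\hc(\Gamma) = n$, then there is a sequence of moves transforming the empty loop into $\Gamma$ containing exactly $n$ of the $2$-simplex (type-2) operations, possibly interspersed with the two-vertex (type-1) operations $(v_0, v_1, v_0) \leftrightarrow (v_0)$. Since type-1 moves do not affect transport --- indeed $T_{(v_0,v_1)}$ and $T_{(v_1,v_0)}$ are mutually inverse, so $T_{(v_0,v_1,v_0)} = \id$ --- I may discard them and record only the type-2 moves, yielding a chain of simplicial loops $\Gamma_0, \Gamma_1, \ldots, \Gamma_n = \Gamma$ with $\Gamma_0$ the empty loop and each $\Gamma_{i+1}$ obtained from $\Gamma_i$ by a single insertion or omission of a loop $\Delta_i = (v_0^i, v_1^i, v_2^i, v_0^i)$ around a 2-simplex.

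I would fix $\delta(n) := 1/\sqrt{2}$ so that Proposition \ref{prop:transport-entlang-2-simplizes} applies at every step. For each $i$, I split $\Gamma_i = A_i * B_i$ at the vertex $v_0^i$, so that $\Gamma_{i+1} = A_i * \Delta_i * B_i$ in the insertion case (the omission case being symmetric after interchanging the roles of $\Gamma_i$ and $\Gamma_{i+1}$). Functoriality of transport then gives
\[
    T_{\Gamma_{i+1}} - T_{\Gamma_i} = T_{B_i}\,(T_{\Delta_i} - \id)\,T_{A_i}.
\]
Since each $T_{A_i}$ and $T_{B_i}$ is a composition of unitary edge transports, both are unitary, and hence $\|T_{\Gamma_{i+1}} - T_{\Gamma_i}\| = \|T_{\Delta_i} - \id\| \leq 7\sqrt{2}\,\epsilon$ by Proposition \ref{prop:transport-entlang-2-simplizes}.

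A telescoping triangle inequality then yields
\[
    \|T_\Gamma - \id\| \;=\; \|T_{\Gamma_n} - T_{\Gamma_0}\| \;\leq\; \sum_{i=0}^{n-1} \|T_{\Gamma_{i+1}} - T_{\Gamma_i}\| \;\leq\; 7\sqrt{2}\,n\,\epsilon,
\]
so the conclusion holds with $c(n) = 7\sqrt{2}\,n$ and, in fact, $\delta(n) = 1/\sqrt{2}$ independent of $n$.

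The argument is essentially a clean telescoping estimate, so I do not anticipate a serious obstacle. The only subtle point is organising the insertion and omission cases uniformly, observing that in both cases the change in the composite transport is a unitary conjugate of $T_{\Delta_i} - \id$, and confirming that the type-1 moves can indeed be dropped without loss. One might expect $\delta(n)$ to shrink with $n$, but because Proposition \ref{prop:transport-entlang-2-simplizes} delivers a uniform per-step constant, the single threshold $\delta = 1/\sqrt{2}$ suffices for every $n$.
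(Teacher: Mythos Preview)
Your argument is correct and in fact cleaner than the paper's. The paper proceeds by induction on $n$, writing $\Gamma=(\Gamma_1,\hat\Gamma,\Gamma_2)$ with $\hat\Gamma$ the boundary of a $2$-simplex and $\hc(\Gamma_1,\Gamma_2)=n-1$, then rewriting $\|T_\Gamma-\id\|=\|T_{\hat\Gamma}T_{(\Gamma_1,\Gamma_2)}-\id\|$ and invoking the product lemma (lemma~\ref{lem:very-useful-lemma}) to combine the two factors. This produces a recursion $c(n)=3\max\{c(1),c(n-1)\}$ and forces $\delta(n)$ to shrink so that $c(n-1)\epsilon\leq 1$ at each step; the resulting $c(n)$ grows exponentially in $n$. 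Your telescoping avoids lemma~\ref{lem:very-useful-lemma} altogether: the key observation that $T_{\Gamma_{i+1}}-T_{\Gamma_i}=T_{B_i}(T_{\Delta_i}-\id)T_{A_i}$ with $T_{A_i},T_{B_i}$ unitary gives an \emph{exact} per-step bound $7\sqrt 2\,\epsilon$, so the estimate is linear in $n$ and no shrinking of $\delta$ is needed. This is a genuine improvement in constants.

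One small wrinkle: when you ``discard'' the type-1 moves, the resulting chain $\Gamma_0,\ldots,\Gamma_n$ need not literally satisfy that $\Gamma_{i+1}$ arises from $\Gamma_i$ by a \emph{single} type-2 move; there may be intervening type-1 moves, so your splitting $\Gamma_i=A_i*B_i$, $\Gamma_{i+1}=A_i*\Delta_i*B_i$ is not quite literal. The fix is immediate: keep the full chain of length $m\geq n$ and note that type-1 steps contribute zero to the telescoping sum, or equivalently replace each $\Gamma_i$ by the loop immediately preceding the $(i{+}1)$-st type-2 move (same transport). Either way your bound $\|T_\Gamma-\id\|\leq 7\sqrt 2\,n\,\epsilon$ stands.
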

\begin{proof}
  We prove the claim by induction over $n$. If $n=0$ there is nothing to do, since insertions or omissions of the first type
  do not alterate the transport map. Thus we assume that $\Gamma=(\Gamma_1,\hat\Gamma,\Gamma_2)$ with
  $\hat\Gamma=(v_0,v_1,v_2,v_0)$ and $\hc(\Gamma_1,\Gamma_2)=n-1$. Since the transport maps are isomorphisms of Hilbert 
  $A$-modules, we get that
  \[
    \|T_\Gamma-\id\|=\|T_{\Gamma_2}T_{\hat\Gamma}T_{\Gamma_1}-\id\|=\|T_{\hat\Gamma}T_{(\Gamma_1,\Gamma_2)}-\id\|.
  \]
  By proposition \ref{prop:transport-entlang-2-simplizes}, $\|T_{\hat\Gamma}-\id\|\leq c(1)\epsilon$ where $c(1)=7\sqrt 2$, and by
  induction we may assume that $\|T_{(\Gamma_1,\Gamma_2)}-\id\|\leq c(n-1)\epsilon$ if $\epsilon\leq\min\{1/\sqrt
  2,\delta(n-1)\}$. If we now let $\delta(n)=\min\{c(1)^{-1},c(n-1)^{-1},\delta(n-1)\}$ then $\max\{c(1),c(n-1)\}\epsilon\leq 1$,
  so we may use lemma \ref{lem:very-useful-lemma} to show that
  \[
    \|T_\Gamma-\id\|\leq 3\max\{c(1),c(n-1)\}\epsilon.
  \]
  If, on the other hand, $\Gamma=(\Gamma_1,\Gamma_2)$ and $\hc(\Gamma_1,\hat\Gamma,\Gamma_2)\leq n-1$, then transport along
  $\Gamma$ is the same thing as transport along the curve $(\Gamma_1,v_0,v_2,v_1,v_0,\hat\Gamma,\Gamma_2)$ and we may use
  the same proof to show that $\|T_\Gamma-\id\|\leq 3\max\{c(1),c(n-1)\}\epsilon$. The claim of the theorem follows with
  $c(n)=3\max\{c(1),c(n-1)\}$.
\end{proof}

\subsection{The trivialization lemma}

Now we can use the previous results to prove the trivialization lemma. This states that $\epsilon$-flat bundles over simply
connected spaces are trivial if $\epsilon$ is small enough. We can further achieve that the transition functions from the
$\epsilon$-flat family to the global trivialization are Lipschitz with small Lipschitz constant.

This is to say that if $E\to|X|$ is an almost flat Hilbert $A$-module bundle with global trivialization 
$\Phi_X\colon|X|\times V\to E$, then, as before, we obtain the transition function
\[
  \Psi_{\rho\in X}\colon|\rho|\to U(\mathcal L_A(V)),\quad x\mapsto\Phi_\rho(x,\cdot)^{-1}\circ\Phi_X(x,\cdot).
\]
Now $\Phi_X$ is a \emph{global $\epsilon$-trivialization} if the maps $\Psi_{\rho\in X}$ are $\epsilon$-Lipschitz for 
every simplex $\rho\in X$.

The proof of the trivialization lemma relies heavily on the following extension property for maps into the unitaries of a
C*-algebra, for which we give a proof in appendix \ref{app:unitary-elements-of-cstar-algebras}.

\begin{lem}\label{lem:extension-of-unitary-valued-lipschitz-maps}
  There is a number $C>0$ with the following property. Let $A$ be a C*-algebra with unit, and denote $U(A)$ the set of unitary
  elements of $A$. Let further $\alpha_0\colon S^{n-1}\to U(A)$ be a $\lambda$-Lipschitz map. If $\diam\alpha_0(S^{n-1})\leq\frac
  12$, there exists an extension $\alpha\colon D^n\to U(A)$ with Lipschitz constant at most $C\lambda$. Here, the constant $C$ is
  independent of $A$.
\end{lem}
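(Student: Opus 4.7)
My plan is to linearise the problem via the operator logarithm, extend in the resulting Banach-space setting by a cone construction, and exponentiate back. The diameter hypothesis is exactly what ensures that $\log$ is defined and well-behaved on the image of $\alpha_0$; it is also the mechanism by which the sup-norm of the linearised map gets bounded by a multiple of $\lambda$.

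I would first pick any $x_0\in S^{n-1}$, set $u_0:=\alpha_0(x_0)$, and replace $\alpha_0$ by $\alpha_0'(x):=u_0^{-1}\alpha_0(x)$. Left multiplication by the unitary $u_0^{-1}$ is isometric on $A$, so $\alpha_0'$ is still $\lambda$-Lipschitz into $U(A)$, and $\|\alpha_0'(x)-1\|\leq\diam\alpha_0(S^{n-1})\leq 1/2$. On the closed ball $\{w\in A:\|w-1\|\leq 1/2\}$ the principal logarithm $\log w=\sum_{k\geq 1}(-1)^{k-1}(w-1)^k/k$ is defined, its Fr\'echet derivative has operator norm at most $(1-\|w-1\|)^{-1}\leq 2$, and (by functional calculus, since unitaries are normal with spectrum on the unit circle) $\log$ sends $U(A)$ to $\mathfrak u(A)$. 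Setting $\beta_0(x):=\log\alpha_0'(x)$ yields a $2\lambda$-Lipschitz map $\beta_0\colon S^{n-1}\to\mathfrak u(A)$ with $\|\beta_0(x)\|\leq 2\|\alpha_0'(x)-1\|$. Combining the Lipschitz bound $\|\alpha_0'(x)-1\|\leq\lambda\|x-x_0\|\leq 2\lambda$ with the diameter bound $\|\alpha_0'(x)-1\|\leq 1/2$ gives $M:=\sup_{S^{n-1}}\|\beta_0\|\leq 4\lambda$ (the two bounds dominate in the regimes $\lambda\leq 1/4$ and $\lambda\geq 1/4$ respectively).

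Next I would extend $\beta_0$ radially by the cone from the origin, $\tilde\beta(p):=\|p\|\cdot\beta_0(p/\|p\|)$ for $p\neq 0$ and $\tilde\beta(0):=0$. For $p=r\omega$, $q=s\eta$ with $0\leq r\leq s\leq 1$ and $\omega,\eta\in S^{n-1}$, the Euclidean identity $\|p-q\|^2=(r-s)^2+rs\|\omega-\eta\|^2$ gives simultaneously $|r-s|\leq\|p-q\|$ and $r\|\omega-\eta\|\leq\sqrt{rs}\|\omega-\eta\|\leq\|p-q\|$. Splitting
\[
  r\beta_0(\omega)-s\beta_0(\eta)=r\bigl(\beta_0(\omega)-\beta_0(\eta)\bigr)-(s-r)\beta_0(\eta)
\]
and applying the triangle inequality produces $\|\tilde\beta(p)-\tilde\beta(q)\|\leq(2\lambda+M)\|p-q\|\leq 6\lambda\|p-q\|$, while $\tilde\beta$ remains skew-adjoint valued.

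Finally set $\alpha(p):=u_0\cdot\exp\tilde\beta(p)$. Since $\tilde\beta(p)\in\mathfrak u(A)$, the exponential lies in $U(A)$, and $\alpha|_{S^{n-1}}=\alpha_0$. The Duhamel identity $\exp A-\exp B=\int_0^1\exp(tA)(A-B)\exp((1-t)B)\,dt$ combined with the fact that exponentials of skew-adjoint elements are unitary yields $\|\exp A-\exp B\|\leq\|A-B\|$ for $A,B\in\mathfrak u(A)$; left multiplication by $u_0$ is isometric; hence $\alpha$ is $6\lambda$-Lipschitz and $C:=6$ suffices, independently of $n$ and $A$. The main obstacle I expect is the cone-extension estimate: the naive bound fails when $p,q$ sit at small, similar radii but very different angles, and it is precisely the Euclidean decomposition of $\|p-q\|^2$ combined with the control $M\leq 4\lambda$ that prevents the Lipschitz constant of $\tilde\beta$ from blowing up.
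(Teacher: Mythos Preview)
Your argument is correct and gives the result with the explicit constant $C=6$, but it follows a genuinely different route from the paper. The paper linearises by projecting onto the skew-adjoint part, $\pi(a)=\tfrac12(a-a^*)$, and goes back via the Cayley-type map $g(v)=v+(1+v^2)^{1/2}$, whose Lipschitz constant it controls through holomorphic functional calculus; you instead use the pair $\log/\exp$, which is the natural Lie-theoretic linearisation and has the advantage that $\exp$ is globally defined, so no domain check is needed on the way back. For the extension step in the linear target, the paper avoids the direct cone $p\mapsto\|p\|\,\beta_0(p/\|p\|)$ (remarking that its Lipschitz constant ``is not as easy as it looks'') and uses instead a padded cone which is constantly zero on the inner half-disk; you carry out the direct cone estimate explicitly via the Euclidean identity $\|p-q\|^2=(r-s)^2+rs\,\|\omega-\eta\|^2$, and this works precisely because you first secure $\sup_{S^{n-1}}\|\beta_0\|\leq 4\lambda$ from the diameter hypothesis. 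Your use of Duhamel to see that $\exp$ is $1$-Lipschitz on $\mathfrak u(A)$ is clean and yields sharper constants than the paper's implicit bound $C_0(1+L)$. Both proofs rest on the same mechanism---the diameter bound controls the sup-norm of the linearised map, which is what keeps the cone extension Lipschitz---but you make this explicit, while the paper leaves it inside ``one can easily deduce''.
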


For the statement of the trivialization lemma, recall that a \emph{tree} in a simplicial complex $X$ is a contractible subcomplex
of the $1$-skeleton of $X$. Every simplicial complex contains a maximal tree, and it is a basic fact that a tree is maximal in $X$
if and only if it contains all the vertices of $X$.

\begin{thm}[Trivialization lemma]\label{thm:trivialization lemma}
  Let $X$ be a simplicial complex, and let $T\subset X$ be a maximal tree in $X$. Then there are constants $C(X),\delta(X)>0$ such 
  that the following holds: Let $E\to X$ be an $\epsilon$-flat bundle where $\epsilon\leq\delta(X)$. Suppose further that for 
  every simplex $\sigma=\{x,y\}\in X_1$, there is a simplicial loop $\Gamma_\sigma=(x,y)*\Gamma_\sigma^0$ such that 
  $\Gamma_\sigma^0$ is a simplicial path in $T$ and such that $\|T_{\Gamma_\sigma}-\id\|\leq\epsilon$. Then $E$ admits a global 
  $C(X)\epsilon$-trivialization.
\end{thm}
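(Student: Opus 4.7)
The plan is to build $\Phi_X$ by induction on the skeleta of $X$, using the tree $T$ to fix a coherent identification of all vertex fibres with $V$, and then applying lemma \ref{lem:extension-of-unitary-valued-lipschitz-maps} at each higher skeletal level to extend the trivialization over new simplices with controlled Lipschitz constants.

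First I fix a root vertex $v_0\in T$ and a unitary identification $\Phi_X(v_0,\cdot)\colon V\to E_{v_0}$. Since $T$ contains every vertex, each $v\in V_X$ is joined to $v_0$ by a simplicial path $\tau_v$ in $T$, unique up to backtracks of the form $(u,w,u)\mapsto(u)$ which do not affect transport. Setting $\Phi_X(v,\cdot):=T_{\tau_v}\circ\Phi_X(v_0,\cdot)$ defines $\Phi_X$ on $|X^{(0)}|$. For a $1$-simplex $\sigma=\{x,y\}$, a direct calculation gives
\[
  \Psi_{\sigma\in X}(y)\,\Psi_{\sigma\in X}(x)^{-1}=\Phi_\sigma(y,\cdot)^{-1}\,T_{\tau_y}\,T_{\tau_x}^{-1}\,T_{(x,y)}^{-1}\,\Phi_\sigma(y,\cdot),
\]
which is a unitary conjugate of transport along a simplicial loop that, modulo tree backtracks, coincides with $\Gamma_\sigma^{-1}$ rebased at $y$; by hypothesis this is within $\epsilon$ of $\id$. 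A geodesic interpolation in $U(\mathcal L_A(V))$ between the two endpoint values extends $\Psi_{\sigma\in X}$ across $|\sigma|$ with Lipschitz constant on the order of $\epsilon$, and this determines $\Phi_X$ on $|\sigma|$.

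For the inductive step from $k-1$ to $k\geq 2$, suppose $\Phi_X$ has been defined on $|X^{(k-1)}|$ so that $\Psi_{\rho\in X}$ is $C_{k-1}\epsilon$-Lipschitz for every simplex $\rho$ of dimension less than $k$. Given a $k$-simplex $\sigma$, the cocycle identity forces
\[
  \Psi_{\sigma\in X}\big|_{|\rho|}=\Psi_{\rho\subset\sigma}^{-1}\cdot\Psi_{\rho\in X}
\]
on each proper face $\rho\subsetneq\sigma$. Since $\Psi_{\rho\subset\sigma}$ is $\epsilon$-Lipschitz as part of the given $\epsilon$-flat family, the restriction $\Psi_{\sigma\in X}|_{|\partial\sigma|}$ is $C'_k\epsilon$-Lipschitz for some $C'_k$ depending only on $k$ and $C_{k-1}$; the uniform bound across different faces follows by joining two boundary points by a piecewise-linear arc in $|\partial\sigma|$ of length comparable to their intrinsic distance and crossing at most $k+1$ faces. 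The diameter of the image of $|\partial\sigma|$ in $U(\mathcal L_A(V))$ is therefore at most $\sqrt 2\,C'_k\epsilon$, which we force to be at most $1/2$ by shrinking $\delta(X)$. Lemma \ref{lem:extension-of-unitary-valued-lipschitz-maps} then supplies an extension of $\Psi_{\sigma\in X}$ to all of $|\sigma|$ with Lipschitz constant at most $C\cdot C'_k\epsilon$, and we set $C_k:=C\cdot C'_k$.

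The main obstacle is the simultaneous bookkeeping of the constants $C_k$ and the diameter hypotheses throughout the induction: one must choose $\delta(X)$ small enough that at every level $k\leq\dim X$ one has $C_{k-1}\delta(X)\leq 1/(2\sqrt 2)$, so that the hypothesis of lemma \ref{lem:extension-of-unitary-valued-lipschitz-maps} is satisfied at every step, while simultaneously $C(X):=C_{\dim X}$ remains finite. Since the recursion $C_k=C\cdot C'_k$ involves only finitely many steps (up to $\dim X$), both constants can indeed be chosen depending only on $\dim X$. A subsidiary technical point that needs careful verification is the passage from the face-wise Lipschitz bound on $\Psi_{\sigma\in X}$ to a uniform bound on the entire sphere $|\partial\sigma|$, which rests on the piecewise-linear geometry of the boundary of a simplex.
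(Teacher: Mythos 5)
Your proof is correct and follows essentially the same approach as the paper: fix a trivialization over the maximal tree (via transport along tree paths from a root), verify using the hypothesis on the $\Gamma_\sigma$ that the resulting transition functions over $1$-simplices are $\epsilon$-close at endpoints, and then induct on skeleta using Lemma~\ref{lem:extension-of-unitary-valued-lipschitz-maps} to extend across each new simplex, with bookkeeping of the constants $C_k$ over finitely many steps. The only point worth tightening is your phrase ``geodesic interpolation in $U(\mathcal L_A(V))$'' for the $1$-simplex step: the unitary group of a general C*-algebra need not support geodesics in any useful sense, and the correct device is exactly Lemma~\ref{lem:extension-of-unitary-valued-lipschitz-maps} applied with $n=1$ (a two-point domain $S^0$ extended to $D^1$), which is what the paper uses and what your inductive step already invokes at higher dimensions.
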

\begin{proof}
  Firstly, we want to prove that such a trivialization exists in the case that $X$ is a graph, i.~e. $1$-dimensional. First note
  that it is trivial to construct a global $0$-trivialization over the tree $T$. Now let $\sigma=\{x,y\}\in X_1$ be a simplex, and 
  let $\Gamma_\sigma=(x,y)*\Gamma_\sigma^0$ be a simplicial loop as in the assumption of the theorem. Consider the transition 
  function
  \[
    \Psi_{\sigma,T}\colon\{x,y\}\to U(\mathcal L_A(V)),\quad x\mapsto\Phi_\sigma(x,\cdot)^{-1}\circ\Phi_T(x,\cdot).
  \]
  Then we have that
  \begin{align*}
    \|\Psi_{\sigma,T}(x)-\Psi_{\sigma,T}(y)\|&=\|\Phi_T(y,\cdot)^{-1}T_{(x,y)}\Phi_T(x,\cdot)-\id\|\\
                                             &=\|\Phi_T(y,\cdot)^{-1}T_{\Gamma_\sigma^0}^{-1}\Phi_T(y,\cdot)
                                              \Phi_T(y,\cdot)^{-1}T_{\Gamma_\sigma}\Phi_T(x,\cdot)-\id\|\\
                                             &=\|T_{\Gamma_\sigma}-\id\|\leq\epsilon
  \end{align*}
  by assumption and using that $\Phi_T(y,\cdot)^{-1}T_{\Gamma_\sigma^0}^{-1}\Phi_T(y,\cdot)=\id$ because $\Phi_T$ is a
  $0$-trivialization. Now by lemma
  \ref{lem:extension-of-unitary-valued-lipschitz-maps}, the map $\Psi_{\sigma,T}$ posseses a $C\epsilon$-Lipschitz extension
  $\Psi_{\sigma,T}'$. Of course, we have that $\Phi_T(p,\cdot)=\Phi_\sigma(p,\cdot)\circ\Psi_{\sigma,T}'(p)$ for $p\in\{x,y\}$, so
  we can extend $\Phi_T$ by the same formula onto $|\sigma|$, and the transition function for $\sigma$ will be
  $C\epsilon$-Lipschitz.

  In the general case, we proceed by induction on the dimension of $Y$. Thus, we may assume that we already have a global
  $\epsilon$-trivialization over the $k$-skeleton of $Y$, where $k\geq 1$, and we want to extend it to the $(k+1)$-skeleton. Let
  $S=Y^{(k)}=\bigcup_{i\leq k}Y_i$, and let $\Phi_S$ be the corresponding global trivialization. Given a $(k+1)$-simplex $\rho$, 
  the map 
  \[
    \Psi_{\rho,S}\colon|\partial\rho|\to U(\mathcal L_A(V)),\quad x\mapsto\Phi_\sigma(x,\cdot)^{-1}\circ\Phi_S(x,\cdot)
  \]
  is $\epsilon$-Lipschitz on every simplex of $|\partial\rho|$. By connectedness of $|\partial\rho|$, it is globally 
  $\epsilon$-Lipschitz, and we may use lemma \ref{lem:extension-of-unitary-valued-lipschitz-maps} again to find a Lipschitz 
  extension on the whole of $|\rho|$. As above, this completes the proof in the general case.
\end{proof}

\begin{korl}\label{korl:trivialization-lemma-simply-connected}
  Let $X$ be a finite contractible simplicial complex. Then there are constants $C(X),\delta(X)>0$ such that the following holds:
  Let $E\to X$ be an $\epsilon$-flat bundle where $\epsilon\leq\delta(X)$. Then $E$ admits a global $C(X)\epsilon$-trivialization.
\end{korl}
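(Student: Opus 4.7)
The plan is to deduce the corollary directly from the trivialization lemma (Theorem \ref{thm:trivialization lemma}) by producing loops $\Gamma_\sigma$ of the form required there and verifying the bound on $\|T_{\Gamma_\sigma}-\id\|$ via Theorem \ref{thm:transport-along-contractible-loops}.

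Since $X$ is finite, connected and contractible, I would first fix a maximal tree $T \subset X$; because $X$ is connected, $T$ contains every vertex. For each edge $\sigma = \{x,y\} \in X_1$, there is then a unique simplicial path $\Gamma_\sigma^0$ in $T$ from $y$ to $x$, and the concatenation $\Gamma_\sigma = (x,y)*\Gamma_\sigma^0$ is a simplicial loop based at $x$. Because $X$ is simply connected, Proposition \ref{prop:presentation-of-fundamental-group} shows that every such $\Gamma_\sigma$ is contractible, so its homotopical complexity $n_\sigma = \hc(\Gamma_\sigma)$ is finite.

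The crucial step is to obtain uniform control of these complexities. Since $X$ is finite, the set $X_1$ is finite, so (after the above choices are fixed once and for all) the number
\[
  N := \max_{\sigma \in X_1} n_\sigma
\]
is finite and depends only on $X$. Theorem \ref{thm:transport-along-contractible-loops} then supplies constants $c_0 = c(N)$ and $\delta_0 = \delta(N)$, depending only on $X$, such that for every $\epsilon$-flat bundle $E\to|X|$ with $\epsilon \leq \delta_0$ one has $\|T_{\Gamma_\sigma} - \id\| \leq c_0 \epsilon$ for every $\sigma \in X_1$.

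To match the hypothesis of the trivialization lemma exactly, I would use the trivial observation that an $\epsilon$-flat bundle is also $\epsilon'$-flat for any $\epsilon' \geq \epsilon$. Setting $\epsilon' := c_0 \epsilon$ (and noting $c_0 \geq 1$), both clauses of Theorem \ref{thm:trivialization lemma} are then satisfied at level $\epsilon'$, provided $\epsilon'$ lies below the threshold $\delta'$ of that theorem. This yields a global $C'\epsilon'$-trivialization, i.e.\ a global $C' c_0 \epsilon$-trivialization. Setting $\delta(X) := \min\{\delta_0,\, \delta'/c_0\}$ and $C(X) := C' c_0$ completes the proof. The only mild subtlety is the extraction of the uniform complexity bound $N$; this is precisely the place where finiteness of $X$ is used, and without it the constants $c(n_\sigma)$ might not be uniformly bounded.
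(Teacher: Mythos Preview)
Your proof is correct and follows essentially the same approach as the paper: choose a maximal tree, form the loops $\Gamma_\sigma$, invoke Theorem~\ref{thm:transport-along-contractible-loops} to bound $\|T_{\Gamma_\sigma}-\id\|$, and feed this into Theorem~\ref{thm:trivialization lemma}. You are more explicit than the paper about two points that the paper leaves implicit---the use of finiteness of $X$ to obtain a uniform bound $N$ on the homotopical complexities, and the rescaling $\epsilon\mapsto c_0\epsilon$ needed to match the hypothesis of the trivialization lemma exactly---but the underlying argument is the same.
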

\begin{proof}
  Choose a tree $T\subset X$, and arbitrary paths $\Gamma_\sigma=(x,y)*\Gamma_\sigma^0$ for every simplex $\sigma=\{x,y\}\in
  X_1-T$, such that $\Gamma_\sigma^0$ is a simplicial path in $T$. Then these curves satisfy the assumptions from theorem
  \ref{thm:trivialization lemma} because of theorem \ref{thm:transport-along-contractible-loops}.
\end{proof}

\section{Applications of the trivialization lemma}

\label{sec:first-applications}

\subsection{Subdivision}

As a first application of the trivialization lemma, we show that almost flat bundles are invariant under barycentric subdivision.

The \emph{barycentric subdivision} of a simplicial complex $X$ is the simplicial complex $\mathcal S(X)$ whose vertices are
$V_{\mathcal S(X)}=X$ and whose simplices are $\mathcal S(X)=\{\{\sigma_0,\ldots,\sigma_k\}:\sigma_i\subset\sigma_{i+1}\}$.
This deserves the name subdivision, as the following shows.

\begin{lem}[{\cite{spanier-algebraic-topology}}]
  For any simplicial complex $X$, there is a natural homeomorphism $\Xi_X\colon|\mathcal S(X)|\to|X|$ which is given on vertices 
  by $\Xi_X(\sigma)=\sum_{v\in\sigma}\frac 1{\#\sigma}v$, and which is affine linear on every simplex of $\mathcal S(X)$.\qed
\end{lem}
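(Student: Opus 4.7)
The plan is to define $\Xi_X$ explicitly on vertices and extend affinely, then verify the result is a homeomorphism by analysing the combinatorics of barycentric coordinates. On the vertex $\sigma\in V_{\mathcal S(X)}$, set $\Xi_X(\sigma)=b_\sigma:=\sum_{v\in\sigma}\frac{1}{\#\sigma}v$, the barycentre of $|\sigma|\subset|X|$. For a simplex $\tau=\{\sigma_0\subsetneq\cdots\subsetneq\sigma_k\}$ of $\mathcal S(X)$, extend by $\Xi_X\bigl(\sum_i\lambda_i\sigma_i\bigr)=\sum_i\lambda_i b_{\sigma_i}$; since each $b_{\sigma_i}$ lies in the convex set $|\sigma_k|$, the image lies in $|\sigma_k|\subset|X|$. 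Compatibility on common faces of simplices of $\mathcal S(X)$ is immediate from the formula, so $\Xi_X$ is a well-defined set map whose restriction to each closed simplex of $|\mathcal S(X)|$ is affine, hence continuous; by the weak topology on $|\mathcal S(X)|$ the map $\Xi_X$ is globally continuous.

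The core step is bijectivity, which I would establish by producing an explicit inverse. Given $x$ in the open interior of some $|\sigma|$ for $\sigma\in X$, with barycentric coordinates $x=\sum_{v\in\sigma}\mu_v v$ (all $\mu_v>0$), list the distinct values taken by the $\mu_v$ as $c_0>c_1>\cdots>c_k>0$, and let $\sigma_i$ be the set of vertices $v\in\sigma$ with $\mu_v\geq c_i$. Then $\sigma_0\subsetneq\sigma_1\subsetneq\cdots\subsetneq\sigma_k=\sigma$ is a flag, hence a simplex $\tau$ of $\mathcal S(X)$, and the equations $\mu_v=\sum_{i\,:\,v\in\sigma_i}\frac{\lambda_i}{\#\sigma_i}$ form a telescoping system whose unique solution $\lambda_i=\#\sigma_i(c_i-c_{i+1})$ (with $c_{k+1}:=0$) is positive and sums to $1$ (as one checks directly from $\sum_v\mu_v=1$). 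These coefficients give a point of $|\tau|$ mapping to $x$, and the derivation exhibits both existence and uniqueness of the preimage. This combinatorial decomposition is the main technical obstacle: one must verify that the ordering of coordinates always yields a flag and that the reconstruction formula produces admissible weights — routine once set up, but the bookkeeping is the substance of the lemma.

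To conclude, I would observe that on each open simplex of $|X|$ the inverse constructed above is rational in the $\mu_v$ and therefore continuous; since the decomposition of $|\sigma|$ into images of simplices of $\mathcal S(X)$ contained in the flags ending at $\sigma$ is finite and patches together consistently, the inverse is continuous on each closed $|\sigma|\subset|X|$, hence continuous globally by the weak topology on $|X|$. This makes $\Xi_X$ a homeomorphism. Naturality under simplicial maps $f\colon X\to Y$ (injective on each simplex) follows directly from the vertex formula: $f$ sends $\sigma\in X$ to some $\sigma'\in Y$ and carries $b_\sigma$ to $b_{\sigma'}$, after which affine extension commutes with $|f|$ on the nose.
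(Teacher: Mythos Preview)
Your proof is correct and complete. Note, however, that the paper itself gives no proof of this lemma: it is stated with a citation to Spanier and an immediate \qed, so there is no argument in the paper to compare against. Your explicit construction of the inverse via the level-set flag of the barycentric coordinates, together with the telescoping formula $\lambda_i=\#\sigma_i(c_i-c_{i+1})$, is exactly the standard argument one finds in Spanier.

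One small remark on exposition: your phrase ``rational in the $\mu_v$'' for the inverse is not quite accurate---the inverse is piecewise \emph{affine} on each closed $|\sigma|$, with the pieces being precisely the images $\Xi_X(|\tau|)$ for flags $\tau$ terminating at $\sigma$. A cleaner way to finish is to observe that the finite subcomplex of $\mathcal S(X)$ consisting of all simplices subordinate to $\sigma$ has compact realization, and $\Xi_X$ restricted to it is a continuous bijection onto the Hausdorff space $|\sigma|$, hence a homeomorphism; then glue over $|X|$ via the weak topology. This avoids having to discuss continuity of the inverse directly.
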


Thus, we may identify $|\mathcal S(X)|$ and $|X|$, and in particular, a bundle $E\to|X|$ is the same thing as a bundle
$E\to|\mathcal S(X)|$. Now suppose that we have an $\epsilon$-flat bundle $E\to|X|$ (with respect to the triangulation $X$). If
$\rho$ is a simplex of $\mathcal S(X)$ then the image of $|\rho|$ in $|X|$ is contained in the realization of a simplex 
$\bigcup\rho\in X$, and the embedding $\Xi_X\colon|\rho|\to|\bigcup\rho|$ induces a trivialization
\[
  \Phi_\rho=\Phi_{\bigcup\rho}\circ(\Xi_X\times\id)\colon|\rho|\times V\to E|_{|\rho|}.
\]
Now if $\rho'\subset\rho$, then $\Psi_{\rho'\subset\rho}=\Psi_{\bigcup\rho'\subset\bigcup\rho}\circ\Xi_X$.

This shows that almost flat bundles over $X$ induce almost flat bundles over $\mathcal S(X)$. The opposite statement is also true,
as we will see momentarily. For the proof of this statement, we will need the following useful observation.

\begin{lem}\label{lem:product-of-lipschitz-maps}
  Let $X$ be a metric space and let $f,g\colon X\to U(\mathcal L_A(V))$ be $\epsilon$-Lipschitz maps for some $\epsilon>0$. Then
  the map $X\to U(\mathcal L_A(V))$, $x\mapsto f(x)\circ g(x)$, is $3\epsilon$-Lipschitz.
\end{lem}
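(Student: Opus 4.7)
The plan is to apply the standard ``add and subtract'' trick in the non-commutative setting. For any two points $x,y \in X$, I would estimate
\[
  \|f(x)g(x) - f(y)g(y)\| \leq \|f(x)\,(g(x)-g(y))\| + \|(f(x)-f(y))\,g(y)\|
\]
by inserting the intermediate term $f(x)g(y)$ and applying the triangle inequality. Since $f(x)$ and $g(y)$ are unitary elements of $\mathcal L_A(V)$, each has operator norm equal to $1$, so submultiplicativity of the operator norm bounds each of the two summands by $\epsilon\cdot d(x,y)$. This yields $\|f(x)g(x) - f(y)g(y)\| \leq 2\epsilon\, d(x,y)$, which is in fact sharper than the claimed $3\epsilon$-bound; the extra slack in the statement is harmless and presumably chosen for uniformity with other constants appearing later.

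Before invoking the Lipschitz estimate, I would also verify the implicit claim that $x \mapsto f(x)\circ g(x)$ genuinely lands in $U(\mathcal L_A(V))$. This is immediate: if $u,v \in U(\mathcal L_A(V))$, then $(uv)^*(uv) = v^*u^*uv = v^*v = \id$, and similarly $(uv)(uv)^* = \id$, so the product is unitary. There is no real obstacle in this lemma — the only ``content'' is submultiplicativity of the operator norm together with $\|u\|_{\mathrm{op}} = 1$ for unitary $u$, and the argument works verbatim in the Hilbert-module setting because adjointable unitaries between Hilbert $A$-modules satisfy exactly these norm identities.
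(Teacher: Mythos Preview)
Your argument is correct and in fact yields the sharper constant $2\epsilon$. It is, however, a genuinely different route from the paper's own proof. The paper does not insert an intermediate term; instead it multiplies by unitaries on both sides to rewrite
\[
  \|f(x)g(x)-f(y)g(y)\|=\|f(y)^{-1}f(x)\,g(x)g(y)^{-1}-\id\|
\]
and then invokes its ``very useful lemma'' (Lemma~\ref{lem:very-useful-lemma}) with $n=2$, $B_2=f(y)^{-1}f(x)$, $B_1=g(x)g(y)^{-1}$, each satisfying $\|B_i-\id\|\leq\epsilon\,d(x,y)$. That lemma gives the bound $(2^2-1)\epsilon\,d(x,y)=3\epsilon\,d(x,y)$, which is where the $3$ in the statement comes from. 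Your direct triangle-inequality estimate is more elementary and loses nothing; the paper's approach is chosen for consistency, since Lemma~\ref{lem:very-useful-lemma} is the workhorse used repeatedly throughout the argument, and the precise constant is irrelevant for the applications.
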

\begin{proof}
  If $x,y\in X$, then
  \[
    \|f(x)g(x)-f(y)g(y)\|=\|f(y)^{-1}f(x)g(x)g(y)^{-1}-\id\|\leq 3\epsilon
  \]
  by lemma \ref{lem:very-useful-lemma}.
\end{proof}

\begin{prop}\label{prop:subdivision}
  Let $X$ be a finite-dimensional simplicial complex, and consider its barycentric subdivision $\mathcal S(X)$. Then there are
  constants $C_1,C_2,\delta>0$, depending only on the dimension of $X$, such that:
  \begin{itemize}
    \item every $\epsilon$-flat bundle over $X$ is canonically a $C_1\epsilon$-flat bundle over $\mathcal S(X)$, and
    \item if $\epsilon\leq\delta$, then every $\epsilon$-flat bundle over $\mathcal S(X)$ admits a $C_2\epsilon$-flat family of
      trivializations with respect to the triangulation $X$.
  \end{itemize}
\end{prop}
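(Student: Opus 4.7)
The first assertion just unpacks the formula $\Psi^{\mathcal S}_{\rho'\subset\rho}=\Psi_{\bigcup\rho'\subset\bigcup\rho}\circ\Xi_X$ displayed before the proposition. The map $\Xi_X$ restricted to any simplex of $\mathcal S(X)$ is affine linear with image in a single simplex $|\bigcup\rho|$ of $X$; comparing the edge length $\sqrt{2}$ of the standard $\mathcal S$-simplex with the (easily computed) pairwise distances between the barycenters of a chain of faces inside a standard simplex, one obtains a Lipschitz bound $L=L(\dim X)$ for $\Xi_X$ on each $\mathcal S$-simplex. The pulled back transition functions are then $L\epsilon$-Lipschitz, giving $C_1=L$.

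For the converse direction the plan is to apply the trivialization lemma simplex-by-simplex in $X$. Fix $\sigma\in X$; the restriction $E|_{|\sigma|}$ is an $\epsilon$-flat bundle over the induced subdivision $\mathcal S(\sigma)$, which is a finite contractible complex of dimension at most $\dim X$. There are only finitely many combinatorial isomorphism types of such subdivisions, so Corollary \ref{korl:trivialization-lemma-simply-connected} supplies constants $C,\delta>0$ depending only on $\dim X$, and provided $\epsilon\le\delta$, a global $C\epsilon$-trivialization $\Phi'_\sigma\colon|\sigma|\times V\to E|_{|\sigma|}$. These are the candidate trivializations with respect to the triangulation $X$, and the task is to show that the family $\{\Phi'_\sigma\}$ is $C_2\epsilon$-flat for some $C_2=C_2(\dim X)$. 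For $\sigma'\subset\sigma$ and $x\in|\sigma'|$ lying in some $\mathcal S$-simplex $\rho\subset\sigma'$, the construction of the global trivializations gives
\[
\Phi'_\sigma(x,\cdot)=\Phi_\rho(x,\cdot)\circ\Psi_{\rho\in\sigma}(x),\qquad \Phi'_{\sigma'}(x,\cdot)=\Phi_\rho(x,\cdot)\circ\Psi_{\rho\in\sigma'}(x),
\]
with both $\Psi_{\rho\in\sigma}$ and $\Psi_{\rho\in\sigma'}$ being $C\epsilon$-Lipschitz. Hence the new transition function is $\Psi'_{\sigma'\subset\sigma}(x)=\Psi_{\rho\in\sigma'}(x)^{-1}\circ\Psi_{\rho\in\sigma}(x)$, and Lemma \ref{lem:product-of-lipschitz-maps} produces a $3C\epsilon$-Lipschitz estimate on each $\mathcal S$-subsimplex of $|\sigma'|$.

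The main obstacle is converting this piecewise bound into a genuine Lipschitz bound on $|\sigma'|$ equipped with its standard metric. Two observations take care of this. First, the inclusion $\Xi_X\colon|\rho|\hookrightarrow|\sigma'|$ is affine and bi-Lipschitz with constants depending only on $\dim X$, so the $3C\epsilon$-Lipschitz bound in the intrinsic metric of $|\rho|$ transfers, with a dimensional loss, to a Lipschitz bound on $\Xi_X(|\rho|)\subset|\sigma'|$ with respect to the ambient metric. Second, $|\sigma'|$ is convex, so for any $x,y\in|\sigma'|$ the straight line segment crosses a finite succession of $\mathcal S$-subsimplices of total length $|x-y|$; summing the piecewise estimates along this segment (and absorbing everything into one constant) yields the same Lipschitz estimate globally. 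The resulting constant depends only on $C$ and on the dimensional bi-Lipschitz factor, hence only on $\dim X$, and this is the desired $C_2$.
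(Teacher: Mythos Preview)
Your argument is correct and follows the same strategy as the paper: trivialize each $X$-simplex via Corollary~\ref{korl:trivialization-lemma-simply-connected} applied to its barycentric subdivision, then factor the resulting transition function through the given $\mathcal S$-trivializations and invoke Lemma~\ref{lem:product-of-lipschitz-maps}. Two minor differences are worth recording. First, the paper writes the transition function as a product of \emph{three} Lipschitz maps (inserting the original $\mathcal S$-transition $\Psi_{\sigma'\subset\rho'}$ in the middle), whereas you use a single $\mathcal S$-simplex $\rho\subset|\sigma'|$ and obtain only two factors; your version is slightly cleaner, and both are valid. Second, you spell out the passage from a piecewise Lipschitz bound on the $\mathcal S$-subsimplices to a global Lipschitz bound on $|\sigma'|$ via convexity and the dimensional bi-Lipschitz constant of $\Xi_X$; the paper leaves this implicit, so your treatment is in fact more complete on this point.
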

\begin{proof}
  The first assertion follows from the discussion above since $\Xi_X|_{|\rho|}$ is $C_1\epsilon$-Lipschitz with some constant 
  $C_1$ which depends only on the dimension of $\rho$. In fact, $C_1$ is the Lipschitz constant of the map $\Xi_X$ defined above.
  
  For the second assertion, let $E\to\mathcal S(X)$ be an $\epsilon$-flat bundle, and let $\rho$ be a simplex of $X$. Then
  $|\rho|$ is the geometric realization of a contractible sub-complex of $\mathcal S(X)$. Thus, by the trivialization lemma, there
  is a global $C(\rho)\epsilon$-trivialization $\Phi_\rho$ on $|\rho|$. Note that the constant $C(\rho)$ in fact only depends on
  the dimension of the simplex, so there is a constant $C'$ which only depends on the dimension of $X$, such that $C(\rho)\leq C'$
  for every simplex $\rho$. 

  Now consider another simplex $\sigma\subset\rho$, and simplices $\sigma'\subset\rho'$ of $\mathcal S(X)$ with
  $|\sigma'|\subset|\sigma|$ and $|\rho'|\subset|\rho|$. Then, for every $x\in|\rho'|$, we have that
  \[
    \Psi_{\sigma\subset\rho}(x)=[\Phi_\sigma(x,\cdot)^{-1}\Phi_{\sigma'}(x,\cdot)]
    \cdot[\Phi_{\sigma'}(x,\cdot)^{-1}\Phi_{\rho'}(x,\cdot)]\cdot[\Phi_{\rho'}(x,\cdot)^{-1}\Phi_{\rho}(x,\cdot)].
  \]
  This means that $\Psi_{\sigma\subset\rho}$ is (locally) the product of three maps $|\sigma'|\to U(\mathcal L_A(V))$ which are
  $\max(C',1)\epsilon$-Lipschitz. Now the claim follows by a two-fold application of lemma \ref{lem:product-of-lipschitz-maps}.
\end{proof}

\subsection{Extensions of almost flat bundles}

We give another application of the trivialization lemma, which will be the key observation for most of this paper.
Namely, we show that one may extend $\epsilon$-flat bundles in an essentially unique way.

Consider a simplicial complex $X$, and an $\epsilon$-flat bundle $E\to|X^{(k)}|$ over the $k$-skeleton of $X$. We want to extend
$E$ to a $C\epsilon$-small bundle $E\to|X^{(k+1)}|$ over the $(k+1)$-skeleton of $X$. Such an extension is the same thing as 
a global $C\epsilon$-trivialization over the boundary $|\partial\rho|$ of every simplex $\rho\in X_{k+1}$. Now if $k\geq 2$, then
$|\partial\rho|\cong S^k$ is simply connected, so that the trivialization lemma (or rather: corollary
\ref{korl:trivialization-lemma-simply-connected}) provides a constant $C$ such that global
$C\epsilon$-trivializations over all $|\partial\rho|$ exist if $\epsilon$ is small enough. If $k=0$, then there are trivially 
global $0$-trivializations over all $|\partial\rho|$. Thus, we have:

\begin{thm}\label{thm:extension-of-almost-flat-bundles-1}
  For every $k\in\N-\{1\}$, there are constants $C=C(k),\delta=\delta(k)>0$ with the property that for every simplicial complex 
  $X$ and every $\epsilon$-flat bundle $E\to|X^{(k)}|$ with $\epsilon\leq\delta$, there exists an extension of $E$ to a
  $C\epsilon$-flat bundle over $|X^{(k+1)}|$.\qed
\end{thm}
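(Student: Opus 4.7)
The plan is to extend the $\epsilon$-flat bundle one $(k+1)$-simplex at a time. Fix $\rho \in X_{k+1}$ and consider the restriction $E|_{|\partial\rho|}$, which is an $\epsilon$-flat bundle over the subcomplex $\partial\rho$. The key observation is that $\partial\rho$ is combinatorially isomorphic to the boundary of the standard $(k+1)$-simplex, independently of $X$ or $\rho$, so any constants attached to $\partial\rho$ depend only on $k$.

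The heart of the argument is to produce a global $C_0\epsilon$-trivialization $\Phi_{\partial\rho}\colon|\partial\rho|\times V\to E|_{|\partial\rho|}$ for some $C_0=C_0(k)$, provided $\epsilon\leq\delta_0(k)$. For $k=0$, $|\partial\rho|$ is just two points and there is a tautological (and trivially $0$-Lipschitz) global trivialization. For $k\geq 2$, $|\partial\rho|\cong S^k$ is simply connected; since its combinatorial type is fixed, we may choose a maximal tree $T\subset\partial\rho$ and, for each non-tree edge $\sigma=\{x,y\}$, a loop $\Gamma_\sigma=(x,y)*\Gamma_\sigma^0$ with $\Gamma_\sigma^0\subset T$ and homotopical complexity bounded by a constant $n=n(k)$. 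Theorem \ref{thm:transport-along-contractible-loops} then yields $\|T_{\Gamma_\sigma}-\id\|\leq c(n)\epsilon$, and theorem \ref{thm:trivialization lemma} produces the desired global trivialization (after absorbing $c(n)$ into the final constant). This is exactly the scheme used in corollary \ref{korl:trivialization-lemma-simply-connected}, adapted from contractible to simply connected $\partial\rho$.

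Given such a $\Phi_{\partial\rho}$, I extend the bundle over $|\rho|$ by setting $E|_{|\rho|}=|\rho|\times V$ and gluing to $E|_{|\partial\rho|}$ via $\Phi_{\partial\rho}$, taking $\Phi_\rho$ to be the tautological trivialization. For every $k$-face $\tau\subset\rho$, the new transition function $\Psi_{\tau\subset\rho}\colon|\tau|\to U(\mathcal L_A(V))$ then coincides with the restriction of $\Psi_{\tau\in\partial\rho}$ coming from the global trivialization $\Phi_{\partial\rho}$, hence is $C_0\epsilon$-Lipschitz. Performing this simplex by simplex gives an extension of $E$ to a $C_0\epsilon$-flat bundle over $|X^{(k+1)}|$, so we may take $C(k)=C_0(k)$ and $\delta(k)=\delta_0(k)$.

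The only genuine issue to check is that all constants depend only on $k$, and this is automatic because the simplicial complex $\partial\rho$ depends up to isomorphism only on $k$. The excluded case $k=1$ is precisely where the argument must break: $|\partial\rho|\cong S^1$ is not simply connected, so there is no reason for the parallel transport around $\partial\rho$ to be close to the identity, and a global trivialization over $|\partial\rho|$ (and hence an extension across $|\rho|$) can fail to exist.
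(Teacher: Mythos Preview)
Your argument is correct and matches the paper's approach essentially verbatim: the paper also reduces the extension over each $(k+1)$-simplex $\rho$ to producing a global $C\epsilon$-trivialization over $|\partial\rho|$, handles $k=0$ trivially, and for $k\geq 2$ invokes the trivialization lemma via the simple connectivity of $|\partial\rho|\cong S^k$, noting that the constants depend only on the combinatorial type of $\partial\rho$ and hence only on $k$. Your explicit remark that corollary~\ref{korl:trivialization-lemma-simply-connected} is stated for contractible complexes but its proof works verbatim for simply connected ones is exactly the small adaptation the paper glosses over; one tiny phrasing fix is that you should check $\Psi_{\tau\subset\rho}$ for \emph{all} proper faces $\tau$ of $\rho$, not just the $k$-faces, but this follows immediately from the definition of a global $C_0\epsilon$-trivialization.
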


For $k=1$, the existence of such an extension implies that parallel transport along the boundary of every simplex is small. In
turn, this is also a sufficient condition for the existence of an extension by the trivialization lemma. This gives the following
statement.

\begin{thm}\label{thm:extensions-of-almost-flat-bundles-2}
  There are constants $C,\delta>0$ such that the following holds: Let $E\to|X^{(1)}|$ be a bundle with trivializations
  $\Phi_\sigma\colon|\sigma|\times V\to E$. Assume that transport along the boundary of every 2-simplex of $X$ is
  $\epsilon$-close to the identity where $\epsilon\leq\delta$. Then there is an extension of $E$ to a $C\epsilon$-flat bundle over
  $|X^{(2)}|$.\qed
\end{thm}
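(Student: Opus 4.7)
The approach is to extend over each $2$-simplex separately using the trivialization lemma. Fix a $2$-simplex $\rho \in X_2$ and consider the restriction $E|_{|\partial\rho|}$. The simplicial complex $|\partial\rho|$ is combinatorially a triangle with three vertices and three edges, the \emph{same} finite complex $\Delta$ for every $\rho$, so the constants $C(\Delta)$ and $\delta(\Delta)$ furnished by Theorem~\ref{thm:trivialization lemma} serve as universal $C$ and $\delta$ for the whole argument.

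To apply the trivialization lemma to $E|_{|\partial\rho|}$, I first note that $E$ equipped with the given trivializations over $|X^{(1)}|$ is automatically $0$-flat as a bundle over the $1$-skeleton: the only transition functions $\Psi_{\sigma'\subset\sigma}$ available here have $\sigma'$ a vertex, so they are maps from a single point and are trivially $0$-Lipschitz. Next, choose a maximal tree $T \subset \Delta$ consisting of two of the three edges. For the remaining edge $\sigma = \{x,y\}$, the concatenation $\Gamma_\sigma = (x,y) * \Gamma_\sigma^0$, where $\Gamma_\sigma^0$ is the unique path in $T$ from $y$ back to $x$, is precisely the boundary loop of $\rho$. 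By hypothesis $\|T_{\Gamma_\sigma} - \id\| \leq \epsilon$, which is exactly the condition required by Theorem~\ref{thm:trivialization lemma}. The trivialization lemma therefore yields a global $C(\Delta)\epsilon$-trivialization $\Phi_\rho^\partial \colon |\partial\rho| \times V \to E|_{|\partial\rho|}$ whenever $\epsilon \leq \delta(\Delta)$.

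Finally, I extend across each $2$-simplex: set $E|_{|\rho|} := |\rho| \times V$, glued to the bundle already defined over $|\partial\rho|$ via $\Phi_\rho^\partial$, and take the canonical trivialization $\Phi_\rho$ to be the identity map $|\rho| \times V \to |\rho| \times V$. For each edge $\sigma \subset \rho$, the new transition function $\Psi_{\sigma \subset \rho}$ agrees on $|\sigma|$ with the corresponding transition function arising from $\Phi_\rho^\partial$, hence is $C(\Delta)\epsilon$-Lipschitz; transitions $\Psi_{v \subset \rho}$ for vertices are trivially $0$-Lipschitz. Carrying out this construction for every $\rho \in X_2$ in parallel produces the required $C\epsilon$-flat bundle on $|X^{(2)}|$ with $C = C(\Delta)$ and $\delta = \delta(\Delta)$. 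The only substantive step is verifying that the boundary-loop hypothesis of the theorem matches exactly the tree-plus-loop hypothesis of Theorem~\ref{thm:trivialization lemma} on the triangular complex $\Delta$; beyond that, the argument is bookkeeping and I anticipate no real obstacle.
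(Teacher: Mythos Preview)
Your proposal is correct and follows essentially the same approach as the paper: the paper's one-line justification is precisely that the transport hypothesis is the sufficient condition needed to invoke the trivialization lemma on each boundary triangle $\partial\rho$, and you have spelled out exactly those details. The observation that $E$ is automatically $0$-flat over the $1$-skeleton, and that the tree-plus-loop hypothesis of Theorem~\ref{thm:trivialization lemma} on the triangle reduces to the given boundary-transport condition, is just what is implicit in the paper's \qed.
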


On the other hand, such extensions are unique in the following sense.

\begin{thm}\label{thm:uniqueness-of-extensions-of-almost-flat-bundles}
  Let $X$ be a simplicial complex of dimension $n$. Then there are constants $C=C(n),\delta=\delta(n)>0$, depending only on 
  the dimension of $X$, such that the following holds: Let $E,E'\to|X|$ be two $\epsilon$-small bundles modeled on the same 
  projective Hilbert $A$-module $A$, where $\epsilon\leq\delta$. We denote the trivializations by $\Phi_\rho,\Phi_\rho'$, 
  respectively. Assume that for every 1-simplex $\{p,q\}\in X_1$, we have that
  \[
    \|\Phi_q(q,\cdot)^{-1}T_{[p,q]}\Phi_p(p,\cdot)-\Phi_q'(q,\cdot)^{-1}T_{[p,q]}'\Phi_p'(p,\cdot)\|<\epsilon.
  \]
  Then there is an isomorphism of bundles $\Xi\colon E\to E'$ with the property that the map
  \[
    |\sigma|\to U(\mathcal L_A(V)),\quad x\mapsto \Phi_\rho'(x,\cdot)^{-1}\Xi\Phi_\rho(x,\cdot)
  \]
  is $C\epsilon$-Lipschitz for every simplex $\sigma\in X$.
\end{thm}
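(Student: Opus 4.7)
The plan is to build $\Xi$ by induction on the skeleta of $X$, maintaining as an inductive hypothesis that for every simplex $\sigma$ of dimension at most $k$ already reached, the local unitary-valued representative $u_\sigma(x) := \Phi_\sigma'(x,\cdot)^{-1} \circ \Xi(x) \circ \Phi_\sigma(x,\cdot)$ is $C_k\epsilon$-Lipschitz on $|\sigma|$, with $C_k$ depending only on $k$. The main tools are the extension lemma \ref{lem:extension-of-unitary-valued-lipschitz-maps}, the product-of-Lipschitz-maps lemma \ref{lem:product-of-lipschitz-maps}, and the $\epsilon$-Lipschitz control on the transition functions $\Psi_{\sigma\subset\rho}, \Psi'_{\sigma\subset\rho}$ of $E$ and $E'$.

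On vertices I set $\Xi_v := \Phi_v'(v,\cdot) \circ \Phi_v(v,\cdot)^{-1}$, which is the identity in the vertex frames. For a 1-simplex $\sigma = \{p,q\}$, a direct unwrapping of the definitions gives $u_\sigma(p) = \Psi'_{\{p\}\subset\sigma}(p)^{-1}\Psi_{\{p\}\subset\sigma}(p)$ and its $q$-analogue, while the hypothesis translates into $\|\Psi_{\{q\}\subset\sigma}(q) \Psi_{\{p\}\subset\sigma}(p)^{-1} - \Psi'_{\{q\}\subset\sigma}(q) \Psi'_{\{p\}\subset\sigma}(p)^{-1}\| < \epsilon$; since multiplication by unitaries preserves the operator norm, this is equivalent to $\|u_\sigma(p) - u_\sigma(q)\| < \epsilon$. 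For $\epsilon$ small I then write $u_\sigma(q) = u_\sigma(p) \exp(X)$ with $\|X\| \leq 2\epsilon$ (using that $\log$ is Lipschitz near $\id \in U(\mathcal L_A(V))$) and linearly interpolate by $t \mapsto u_\sigma(p) \exp(tX)$ to obtain a $C_1\epsilon$-Lipschitz extension across $|\sigma|$.

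For the inductive step from the $k$-skeleton to the $(k+1)$-skeleton with $k \geq 1$, pick a $(k+1)$-simplex $\rho$. On each proper face $\sigma \subsetneq \rho$ the identity $u_\rho(x) = \Psi'_{\sigma\subset\rho}(x)^{-1} \cdot u_\sigma(x) \cdot \Psi_{\sigma\subset\rho}(x)$ expresses $u_\rho$ on $|\sigma|$ as a triple product of unitary-valued maps with Lipschitz constants $\epsilon$, $C_k\epsilon$, $\epsilon$; two applications of lemma \ref{lem:product-of-lipschitz-maps} bound the Lipschitz constant of $u_\rho|_{|\sigma|}$ by a multiple of $\max(C_k,1)\epsilon$. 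Since $|\partial\rho| \cong S^k$ is connected and any two of its points are joined by a piecewise-linear path of length at most a dimension-dependent multiple of their Euclidean distance, $u_\rho$ is globally Lipschitz on $|\partial\rho|$. Choosing $\delta(n)$ small enough that $u_\rho(|\partial\rho|)$ has diameter at most $1/2$, lemma \ref{lem:extension-of-unitary-valued-lipschitz-maps} yields a Lipschitz extension $u_\rho : |\rho| \to U(\mathcal L_A(V))$; I then set $\Xi(x) := \Phi_\rho'(x,\cdot) u_\rho(x) \Phi_\rho(x,\cdot)^{-1}$ on $|\rho|$, which agrees with the previously defined $\Xi$ on $|\partial\rho|$ by construction.

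The main obstacle is the 1-skeleton step. The hypothesis is phrased in terms of the edge trivializations $\Phi_\sigma, \Phi_\sigma'$ (since $T_{[p,q]}$ uses these), whereas the boundary data that must be matched at $p$ and $q$ comes from the vertex trivializations $\Phi_p, \Phi_p'$. Unraveling both sides carefully and verifying that the hypothesis really does imply the required endpoint closeness, modulo harmless unitary conjugation, is where the assumption is consumed; the remaining skeletal extensions then run on autopilot via the same machinery that underlies the trivialization lemma, with constants accumulating in a dimension-controlled manner because each step multiplies the previous Lipschitz constant by a fixed factor coming from lemmas \ref{lem:extension-of-unitary-valued-lipschitz-maps} and \ref{lem:product-of-lipschitz-maps}.
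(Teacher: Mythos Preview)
Your proof is correct and follows essentially the same inductive strategy as the paper: define $\Xi$ on vertices via the trivializations, verify the endpoint closeness on edges from the hypothesis, then extend skeleton by skeleton using lemma~\ref{lem:extension-of-unitary-valued-lipschitz-maps}. The only minor differences are that you interpolate on edges via $\exp/\log$ rather than invoking lemma~\ref{lem:extension-of-unitary-valued-lipschitz-maps} directly, and you spell out the triple-product identity $u_\rho = (\Psi'_{\sigma\subset\rho})^{-1} u_\sigma \Psi_{\sigma\subset\rho}$ with lemma~\ref{lem:product-of-lipschitz-maps} in the inductive step, which the paper leaves implicit.
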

\begin{proof}
  We prove the statement by induction on the dimension $n$. We begin with the base case $n=1$. For every vertex $v\in X_0$, we
  let
  \[
    \Xi_v=\Phi_v'(v,\cdot)\Phi_v(v,\cdot)^{-1}\colon E_v\to E_v'.
  \]
  If $\rho=\{p,q\}\in X_1$ is an edge, then
  \begin{align*}
    \|&\Phi_\rho'(p,\cdot)^{-1}\Xi_p\Phi_\rho(p,\cdot)-\Phi_\rho'(q,\cdot)^{-1}\Xi_q\Phi_\rho(q,\cdot)\|\\
      &=\|\Phi_q'(q,\cdot)^{-1}\Phi_\rho'(q,\cdot)\Phi_\rho'(p,\cdot)^{-1}\Phi_p'(p,\cdot)-
          \Phi_q(q,\cdot)^{-1}\Phi_\rho(q,\cdot)\Phi_\rho(p,\cdot)^{-1}\Phi_p(p,\cdot)\|\\
          &=\|\Phi_q'(q,\cdot)^{-1}T_{[p,q]}'\Phi_p'(p,\cdot)-\Phi_q(q,\cdot)^{-1}T_{[p,q]}\Phi_p(p,\cdot)\|<\epsilon
  \end{align*}
  by assumption.

  Now we may use lemma \ref{lem:extension-of-unitary-valued-lipschitz-maps} to construct a $C(1)\epsilon$-Lipschitz map
  $f\colon|\rho|\to U(\mathcal L_A(V))$ extending the map $|\partial\rho|\to U(\mathcal L_A(V))$,
  $x\mapsto\Phi_\rho'(x,\cdot)^{-1}\Xi_x\Phi_\rho(x,\cdot)$, provided that $\epsilon$ is small enough. Obviously, the map
  \[
    \Xi_x=\Phi_\rho'(x,\cdot)f(x)\Phi_\rho(x,\cdot)^{-1}\colon E_x\to E_x'
  \]
  extends the maps $\Xi_v$ defined before, and they fit together to an isomorphism of bundles $\Xi_1\colon E|_{X^{(1)}}\to
  E'|_{X^{(1)}}$ which satisfies the Lipschitz property stated in the theorem.

  Thus, consider $n\geq 2$. We assume that we already have a bundle isomorphism 
  $\Xi_{n-1}\colon E|_{X^{(n-1)}}\to E'|_{X^{(n-1)}}$ satisfying the
  demanded Lipschitz property. Let $\rho\in X_n$ be a simplex. By assumption, the map
  \[
    |\partial\rho|\to U(\mathcal L_A(V)),\quad x\mapsto\Phi_\rho'(x,\cdot)^{-1}\Xi\Phi_\rho(x,\cdot)
  \]
  is locally (thus, globally) $C(n-1)\epsilon$-Lipschitz, and if $C(n-1)\epsilon$ is small enough, we may use lemma
  \ref{lem:extension-of-unitary-valued-lipschitz-maps} again to produce a $C(n)\epsilon$-Lipschitz extension $f\colon|\rho|\to
  U(\mathcal L_A(V))$. As before, we may put $\Xi_x=\Phi_\rho'(x,\cdot)f(x)\Phi_\rho(x,\cdot)^{-1}$, which gives the desired
  extension of $\Xi_{n-1}$ to a bundle isomorphism $\Xi_n$.
\end{proof}

\section{Almost flat bundles over Riemannian manifolds}

\label{sec:riemannian-manifolds}

In this section, let $X$ be a triangulated closed Riemannian manifold, and let $E\to X$ be an $\epsilon$-flat bundle. We 
want to show that $E$ may be equipped with a smooth structure, a smooth Hermitian metric, and a compatible connection $\nabla$, 
such that the curvature $\mathcal R^\nabla$ satisfies $\|\mathcal R^\nabla\|\leq C\epsilon$ for some constant $C>0$ which depends 
only on $X$ and on the choice of triangulation. Thus, for triangulated Riemannian manifolds, our definition of an almost flat
bundle strongly corresponds to the definition via the curvature tensor.

Here the idea is that one may define the connection inductively over neighborhoods of the skeleta of $X$. Those neighborhoods will
be constructed as subsets of the union of the open stars of the skeleta in the barycentric subdivision of $X$. The trivialization
lemma will give a trivialization over those open stars, by the following lemma.

\begin{lem}\label{lem:open-star-lemma}
  Let $X$ be a simplicial complex, let $\sigma\in X_k$ be a simplex, and consider the barycentric subdivision $\mathcal S(X)$
  of $X$. Then there is a contractible subcomplex $S\subset\mathcal S(X)$, such that $|S|$ is a neighborhood of $|\sigma|$.
\end{lem}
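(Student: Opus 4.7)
My plan is to construct $S$ as an explicit simplicial neighborhood of $|\sigma|$ inside $\mathcal S(X)$. The candidate I would take is
\[
  S \;=\; \bigcup_{v \in \sigma}\,\overline{\mathrm{St}}\bigl(v,\,\mathcal S(X)\bigr),
\]
the union of the closed stars (taken in $\mathcal S(X)$) of the vertices of $\sigma$; here each vertex $v \in \sigma$ is viewed as the $0$-simplex $\{v\}$ of $X$, i.e.\ as a vertex of $\mathcal S(X)$. A simplex of $\mathcal S(X)$ is by definition a chain $\tau_0 \subsetneq \cdots \subsetneq \tau_m$ in $X$, and a short verification shows that such a chain lies in $\overline{\mathrm{St}}(v, \mathcal S(X))$ if and only if $v \in \tau_0$: the singleton $\{v\}$ is minimal among simplices containing $v$, so $\{v\}$ can be inserted into the chain only at the bottom, and that requires $\{v\} \subseteq \tau_0$. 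Hence $S$ is the subcomplex of $\mathcal S(X)$ consisting of those chains whose minimum element contains at least one vertex of $\sigma$.

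To prove $|S|$ is a neighborhood of $|\sigma|$, I would argue pointwise. Any $x \in |\sigma|$ lies in the interior of a unique open simplex of $\mathcal S(X)$; since $|\sigma|$ inherits the barycentric subdivision from $\mathcal S(X)$, this open simplex corresponds to a chain $\rho_0 \subsetneq \cdots \subsetneq \rho_m$ with every $\rho_i \subseteq \sigma$. Any simplex of $\mathcal S(X)$ whose closure contains $x$ corresponds to a chain refining $\rho_0 \subsetneq \cdots \subsetneq \rho_m$; its minimum element is thus a non-empty face of $\rho_0$, hence a face of $\sigma$, and so contains a vertex of $\sigma$. The entire open star of $x$ in $\mathcal S(X)$ therefore lies in $S$, exhibiting $|S|$ as a neighborhood of $|\sigma|$.

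For contractibility I would invoke the nerve theorem for simplicial covers by subcomplexes, applied to $\{\overline{\mathrm{St}}(v, \mathcal S(X))\}_{v \in \sigma}$. Each member is a simplicial cone with apex $v$ and is therefore contractible. For any non-empty $W \subseteq \sigma$, the set $W$ is a face of $\sigma$ and so a simplex of $X$, giving a vertex $b_W$ of $\mathcal S(X)$; by the characterisation above, $\bigcap_{v \in W}\overline{\mathrm{St}}(v, \mathcal S(X))$ consists of chains $\tau_0 \subsetneq \cdots \subsetneq \tau_m$ with $W \subseteq \tau_0$, and adjoining $W$ at the bottom of any such chain realises the intersection as the simplicial cone with apex $b_W$. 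Every multiple intersection is thus non-empty (the length-zero chain $\{W\}$ witnesses it) and contractible, so the nerve of the cover is the full simplex on the vertex set of $\sigma$. The nerve theorem then yields $|S| \simeq |\sigma|$, which is contractible.

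The delicate step is choosing $S$ correctly. The obvious minimal guess $\overline{\mathrm{St}}(b_\sigma, \mathcal S(X))$ is contractible but fails to be a neighborhood of $|\sigma|$: for any vertex $v \in \sigma$, simplices of $X$ meeting $v$ but disjoint from the rest of $\sigma$ yield simplices of $\mathcal S(X)$ lying outside it. Conversely, taking closed stars of all barycenters $b_\rho$ for $\rho \subseteq \sigma$ clearly gives a neighborhood but is combinatorially awkward for the nerve argument. Restricting to closed stars of vertices of $\sigma$ is the right compromise: it is large enough to contain the full open star of every simplex of $\mathcal S(X)$ meeting $|\sigma|$, yet small enough that every multiple intersection is cleanly a simplicial cone on some $b_W$ with $W \subseteq \sigma$.
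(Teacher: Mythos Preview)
Your proof is correct, and in fact your subcomplex $S$ coincides with the one the paper constructs: the paper takes the full subcomplex of $\mathcal S(X)$ on the vertex set $S_0=\{\rho\in X:\rho\cap\sigma\neq\emptyset\}$, and a chain $\tau_0\subsetneq\cdots\subsetneq\tau_m$ has all its vertices in $S_0$ if and only if $\tau_0\cap\sigma\neq\emptyset$, which is precisely your characterisation ``the minimum contains a vertex of $\sigma$''. Your neighborhood argument is essentially the paper's, spelled out in more detail.

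The genuine difference is in the contractibility step. The paper bypasses the nerve theorem by writing down an explicit deformation retraction of $|S|$ onto $|\sigma|$: on vertices it sends $\rho\in S_0$ to $\rho\cap\sigma$ (non-empty by definition of $S_0$, hence again a vertex of $\mathcal S(X)$), and then extends affinely over each simplex; the straight-line homotopy $t\mapsto t(\rho\cap\sigma)+(1-t)\rho$ stays inside $|S|$ and ends in $|\sigma|$. This is entirely elementary and self-contained. Your route via the nerve lemma is equally valid and the cone description of the intersections is clean, but it invokes a heavier external result where a two-line homotopy suffices. On the other hand, your decomposition into closed stars makes the ``neighborhood'' part of the argument more transparent than the paper's somewhat terse justification.
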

\begin{proof}
  Let $S_0\subset X$ be the set of those simplices $\rho$ which intersect $\sigma$. Since the vertices of $\mathcal S(X)$ are
  in a bijective correspondence with the simplices of $X$, we may define $S\subset\mathcal S(X)$ to be the subcomplex consisting
  of those simplices whose vertices lie in $S_0$.

  Then $|S|$ deformation retracts onto $|\sigma|$: Namely, if $\rho\in S_0$ is a vertex of $S$, then the mapping $t\mapsto
  t(\rho\cap\sigma)+(1-t)\rho$ is a well-defined linear curve in $S$, since by definition of $S_0$ we have that
  $\rho\cap\sigma\neq\emptyset$. This map may be extended linearly to a homotopy $|S|\times I\to|S|$ from the identity to a
  retraction onto $|\sigma|$ because the realization of a simplex $\{\sigma_0,\ldots,\sigma_k\}$ is contained in $|\sigma|$ if and
  only if all $\sigma_i\subset\sigma$. This implies that $|S|$ is contractible.

  It is clear that $|S|$ is a neighborhood of $|\sigma|$, since for every vertex $\rho\in S_0$ with $|\rho|\in|\sigma|$, the open
  star of $\rho$ is also contained in $S_0$.
\end{proof}

Now the first step in constructing the smooth bundle with connection is the following:

\begin{prop}\label{prop:open-sets-transition-functions}
  Let $X$ be a closed triangulated Riemannian manifold. Then there is a constant $C>0$, depending only on $X$, and a family of 
  open sets $U_\sigma$, one for every simplex $\sigma\in X$, such that $|\sigma|\subset U_\sigma$, with the following property:

  Let $E\to|X|$ be an $\epsilon$-flat bundle. Then there are trivializations $\Theta_\sigma\colon U_\sigma\times V\to
  E|_{U_\sigma}$, such that the transition functions
  \[
    \Psi_{\sigma,\rho}\colon U_\sigma\cap U_\rho\to U(\mathcal L_A(V)),\quad x\mapsto\Phi_\sigma(x,\cdot)^{-1}\Phi_\rho(x,\cdot)
  \]
  are all smooth and $C\epsilon$-Lipschitz.
\end{prop}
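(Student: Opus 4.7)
The strategy is to (i) define the open sets $U_\sigma$ combinatorially from the barycentric subdivision, (ii) obtain continuous Lipschitz trivializations via the trivialization lemma, and (iii) promote these to smooth trivializations with smooth cocycles via a smoothing argument in exponential coordinates.

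For (i), for each simplex $\sigma\in X$ I would apply Lemma \ref{lem:open-star-lemma} to obtain a contractible subcomplex $S_\sigma\subset\mathcal S(X)$ whose geometric realization is a neighborhood of $|\sigma|$, and set $U_\sigma$ to be the topological interior of $|S_\sigma|$. Because $X$ is finite and the combinatorial type of $S_\sigma$ depends only on the link of $\sigma$, all constants associated to $S_\sigma$ by the trivialization lemma are uniformly bounded in $\sigma$ in terms of $X$ alone. The crucial point is that $U_\sigma$ is defined before $E$ appears and depends only on the triangulation.

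For (ii), given an $\epsilon$-flat bundle $E\to|X|$, Proposition \ref{prop:subdivision} lets me view $E$ as a $C_1\epsilon$-flat bundle over $\mathcal S(X)$ for a universal $C_1$. Corollary \ref{korl:trivialization-lemma-simply-connected}, applied to each contractible complex $S_\sigma$, then produces continuous global trivializations $\tilde\Theta_\sigma\colon|S_\sigma|\times V\to E|_{|S_\sigma|}$ whose transition functions to the $\mathcal S(X)$-trivializations are $C_2\epsilon$-Lipschitz, provided $\epsilon$ is small enough. On each $\mathcal S(X)$-simplex of $U_\sigma\cap U_\rho$, the cocycle $\tilde\Psi_{\sigma,\rho}=\tilde\Theta_\sigma^{-1}\tilde\Theta_\rho$ factors as a product of three $C_2\epsilon$-Lipschitz unitary-valued maps, so by Lemma \ref{lem:product-of-lipschitz-maps} it is globally $C_3\epsilon$-Lipschitz on $U_\sigma\cap U_\rho$.

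The main obstacle is step (iii): upgrading the continuous $\tilde\Theta_\sigma$ to smooth trivializations $\Theta_\sigma$ whose cocycles are smooth, without inflating the Lipschitz constants by more than a universal factor. The key local tool is smoothing in exponential coordinates: if $f\colon W\to U(\mathcal L_A(V))$ is a Lipschitz map whose image has sufficiently small diameter, then on a smoothly embedded $W\subset X$ one can write $f=u_0\exp(h)$ for a Lipschitz $h$ with values in $\mathfrak u(\mathcal L_A(V))$, mollify $h$ by convolution with a smooth compactly supported bump, and exponentiate back; since $\exp$ is bi-Lipschitz on a bounded neighborhood of $0$, the resulting smooth map has Lipschitz constant increased only by a universal factor. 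To enforce cocycle coherence I would process the simplices $\sigma_1,\ldots,\sigma_N$ in a fixed order (say by increasing dimension) and, at the $j$-th step, replace $\tilde\Theta_{\sigma_j}$ by $\tilde\Theta_{\sigma_j}\cdot u_{\sigma_j}$ for a smooth unitary gauge transformation $u_{\sigma_j}\colon U_{\sigma_j}\to U(\mathcal L_A(V))$ so that every transition function to a previously smoothed $\Theta_{\sigma_i}$ ($i<j$) agrees with the smoothing of $\tilde\Psi_{\sigma_i,\sigma_j}$ on a fixed shrinking of the overlap, interpolated to the unmodified cocycle outside via a smooth partition of unity subordinate to a refinement of $\{U_\sigma\}$. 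Because the cover has bounded multiplicity (depending only on $X$) and each step inflates the constants by only a universal factor, the final cocycles are simultaneously smooth and $C\epsilon$-Lipschitz for a single $C=C(X)$. Everything except this inductive coherent smoothing is a direct packaging of the trivialization lemma and the subdivision proposition.
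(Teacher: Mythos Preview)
Your proposal is correct and follows essentially the same route as the paper: steps (i) and (ii) are identical (open-star neighborhoods in the barycentric subdivision, Proposition~\ref{prop:subdivision}, the trivialization lemma on each $S_\sigma$, and Lemma~\ref{lem:product-of-lipschitz-maps} for the cocycles), and step (iii) is the same iterative smoothing-with-shrinking scheme that the paper packages as Lemma~\ref{lem:smooth-approximation}. The only cosmetic difference is that you stay in $U(\mathcal L_A(V))$ via $\log$/mollify/$\exp$, whereas the paper mollifies directly in $\mathcal L_A(V)$ and then projects back to unitaries by polar decomposition $(\Psi''\Psi''^{*})^{-1/2}\Psi''$; both devices give the same controlled Lipschitz bound.
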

\begin{proof}
  By proposition \ref{prop:subdivision}, we may assume that $E$ is an $C_1\epsilon$-flat bundle with respect to $\mathcal S(X)$.
  Let $\sigma$ be any simplex, and consider the subcomplex $S\subset\mathcal S(X)$ of the barycentric subdivision which was
  described in lemma \ref{lem:open-star-lemma}. Then, over $|S|$, we have a global $C_2\epsilon$-trivialization $\Theta_\sigma$
  of $E$ by theorem \ref{thm:trivialization lemma}. Thus, we may define $U_\sigma$ to be the interior of $|S|$. The Lipschitzness
  is a consequence of lemma \ref{lem:product-of-lipschitz-maps}.

  However, up to now, there is no reason for the transition functions to be smooth. However, if we replace the sets $U_\sigma$ by
  smaller open subsets, it is possible to smoothen the transition functions using the following lemma.

  \begin{lem}\label{lem:smooth-approximation}
    Let $U_0,U_1,\ldots,U_k\subset X$ be open subsets, and let $\Phi_i\colon U_i\times V\to E|_{U_i}$ be trivializations such 
    that the transition functions $\Psi_{i,j}\colon U_i\cap U_j\to U(\mathcal L_A(V))$,
    $x\mapsto\Phi_i(x,\cdot)^{-1}\Phi_j(x,\cdot)$ are all $\lambda$-Lipschitz. Let $K_i\subset U_i$ be compact subsets which are
    completely contained in the image of some chart. Suppose further that all $\Psi_{i,j}$ are smooth 
    if $i,j\geq 1$. 
    
    Then there is a constant $C>0$, depending on the sets $U_i$ but not on $E$ nor $V$, and open subsets $V_0,\ldots,V_k\subset X$ 
    satisfying $K_i\subset V_i\subset U_i$, and a trivialization $\tilde\Phi_0\colon V_0\times V\to E|_{V_0}$ such that the 
    transition functions $\tilde\Psi_{0,j}\colon V_0\cap V_j\to U(\mathcal L_A(V))$, 
    $x\mapsto\tilde\Phi_0(x,\cdot)^{-1}\Phi_j(x,\cdot)$ are all smooth and $C\lambda$-Lipschitz.
  \end{lem}

  Now we may complete the proof of proposition \ref{prop:open-sets-transition-functions}. Namely, since the simplices of $X$ are
  smoothly embedded, they are all contained in a coordinate chart, so we may apply the lemma iteratively to get smooth transition
  functions which are still Lipschitz with controlled Lipschitz constant.
\end{proof}

\begin{proof}[Proof of lemma \ref{lem:smooth-approximation}]
  Inductively, we may assume that $\Psi_{0,i}$ is already smooth if $i<j$. Restricting $U_0$ to the image of a bi-Lipschitz
  chart around $K_0$, we may consider $U_0$ to be a subset of $\R^n$ with the induced metric. Let $V_0$ be an open neighborhood of
  $K_0$ in $U_0$ such that $\overline V_0\subset U_0$. For every natural number $k\in\N$, consider smooth functions 
  $\phi_k\colon\R^n\to\R_{\geq 0}$ having support in the $k^{-1}$-ball around the origin, such that $\int_{\R^k}\phi_k=1$. Then,
  if $k$ is large enough, the map
  \[
    \Psi_{0,j}'\colon V_0\cap U_j\to\mathcal L_A(V),\quad x\mapsto\int_{\R^n}\Psi_{0,j}(y)\phi_k(y-x)\,dy
  \]
  is a well-defined smooth map, and it is easily seen to have the same Lipschitz constant $\lambda$ as $\Psi_{0,j}$. 
  Furthermore, $\|\Psi_{0,j}'-\Psi_{0,j}\|_{\text{sup}}\leq\frac\lambda k$. We choose $k$ so large that $\frac\lambda k<\frac 13$.

  Now choose an open neighborhood $V_j\subset U_j$ of $K_j$ such that $\overline V_j\subset U_j$, and let $\chi\colon V_0\to[0,1]$
  be a smooth map satisfying $\chi|_{V_0\cap V_j}=1$ and $\chi=0$ on a neighborhood of $V_0-U_j$. Let
  \[
    \Psi_{0,j}''\colon V_0\cap V_j\to\mathcal L_A(V),\quad x\mapsto\chi(x)\Psi_{0,j}'(x)+(1-\chi(x))\Psi_{0,j}(x).
  \]
  Then we have that
  \begin{align*}
    \|\Psi_{0,j}''(x)-\Psi_{0,j}''(y)\|&\leq\|\chi(x)\|(\|\Psi_{0,j}'(x)-\Psi_{0,j}'(y)\|+\|\Psi_{0,j}(y)-\Psi_{0,j}(x)\|)\\
                                       &\phantom{=}+\|\chi(x)-\chi(y)\|\|\Psi_{0,j}'(y)-\Psi_{0,j}(y)\|,
  \end{align*}
  so we see that $\Psi_{0,j}''$ is $C_1\lambda$-Lipschitz for $C_1=2+L(\chi)k^{-1}$. Finally, we set 
  $\tilde\Psi_{0,j}(x)=(\Psi_{0,j}''(x)\Psi_{0,j}''(x)^*)^{-1/2}\Psi_{0,j}''(x)$. Because of lemma
  \ref{lem:projection-onto-unitaries-lipschitz}, $\tilde\Psi_{0,j}$ is
  $C\lambda$-Lipschitz for some constant which does not depend on $E$ nor on $V$.

  Now let $\tilde\Phi_0(x,\cdot):=\tilde\Psi_{0,j}(x)^{-1}\Phi_j(x,\cdot)$ for every $x\in V_0\cap U_j$, and
  $\tilde\Phi_0(x,\cdot):=\Phi_0(x,\cdot)$ outside of $U_j$.
\end{proof}

Now the idea is to construct a connection with small curvature inductively on open neighborhoods of the skeleta of $X$. In order
to extend this connection, we will need the following extension lemma for connections mainly due to Fukumoto
\cite{fukumoto-invariance-karea-surgery}.

\begin{lem}\label{lem:extension-of-almost-flat-connections}
  Let $B$ be a (not necessarily closed) Riemannian manifold without boundary. Write $X=B\times[0,5]$ equipped with the product
  metric. Then there is a constant $C>0$ with the following property:

  Let $E=X\times V$ be a trivial Hilbert module bundle over $X$, and let $E$ be equipped with a connection $\nabla$ which is
  compatible with the canonical Hermitian metric on $E$. We may write $\nabla=\partial+\Gamma$, where $\partial$ denotes taking
  directional derivatives and where $\Gamma$ is a section of $T^*X\otimes\End(E)$. Assume that $\|\Gamma_x\|\leq\epsilon\|x\|$ for
  every $x\in TX$, and that $\|\mathcal R^\nabla\|\leq\epsilon$, where $\epsilon\leq 1$.

  Then there exists another compatible connection $\tilde\nabla$ on $E$ with the following properties:
  \begin{itemize}
    \item $\|\tilde\Gamma_x\|\leq C\epsilon\|x\|$ for every $x\in TX$ if $\tilde\nabla=\partial+\tilde\Gamma$.
    \item $\|\mathcal R^{\tilde\nabla}\|\leq C\epsilon$,
    \item $\tilde\nabla=\nabla$ over $B\times[0,1]$, and
    \item $\tilde\nabla=\partial$ over $B\times[4,5]$.
  \end{itemize}
\end{lem}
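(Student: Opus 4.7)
The plan is to interpolate $\Gamma$ to $0$ in the most direct way, via a cut-off in the $t$-variable. Fix a smooth $\chi\colon[0,5]\to[0,1]$ with $\chi\equiv 1$ on $[0,1]$, $\chi\equiv 0$ on $[4,5]$, and $\|\chi'\|_\infty$ bounded by a fixed constant (one can arrange $\|\chi'\|_\infty\leq 1$ since the transition region $[1,4]$ has length $3$). I would then set $\tilde\nabla:=\partial+\tilde\Gamma$ with $\tilde\Gamma:=\chi(t)\,\Gamma$, viewed as a smooth section of $T^*X\otimes\End(E)$.

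Three of the required conditions fall out for free: on $B\times[0,1]$, $\chi\equiv 1$ gives $\tilde\nabla=\nabla$; on $B\times[4,5]$, $\chi\equiv 0$ gives $\tilde\nabla=\partial$; and $\|\tilde\Gamma_x\|=\chi(t)\,\|\Gamma_x\|\leq\epsilon\|x\|$ is immediate. Compatibility of $\tilde\nabla$ with the metric is automatic, since $\chi\Gamma$ still takes values in the skew-adjoint elements of $\mathcal L_A(V)$.

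The only computation of substance is the curvature bound. Writing out $\mathcal R^{\tilde\nabla}=d(\chi\Gamma)+(\chi\Gamma)\wedge(\chi\Gamma)$ and using $d\Gamma=\mathcal R^\nabla-\Gamma\wedge\Gamma$, I would reduce this to
\[
  \mathcal R^{\tilde\nabla}=\chi'(t)\,dt\wedge\Gamma\;+\;\chi(t)\,\mathcal R^\nabla\;+\;\bigl(\chi(t)^2-\chi(t)\bigr)\,\Gamma\wedge\Gamma.
\]
Estimating each summand using the hypotheses $\|\Gamma_x\|\leq\epsilon\|x\|$, $\|\mathcal R^\nabla\|\leq\epsilon$, $\epsilon\leq 1$, and the uniform bound on $\chi,\chi'$ then yields $\|\mathcal R^{\tilde\nabla}\|\leq C\epsilon$ for a universal constant $C$.

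What might be perceived as the main obstacle -- controlling the term $\chi'(t)\,dt\wedge\Gamma$ arising from differentiating the cut-off -- is in fact handled by the (perhaps unexpected) hypothesis that the connection form $\Gamma$ itself has norm at most $\epsilon$. This hypothesis is genuinely used, because the curvature alone does not control $\|\Gamma\|$ (a gauge transformation can make $\|\Gamma\|$ arbitrarily large). Had one only been given $\|\mathcal R^\nabla\|\leq\epsilon$, the natural strategy would have been to first pass to a gauge in which $\Gamma$ is small (for instance, the temporal gauge along the $t$-direction, in which $\Gamma(\partial_t)=0$ and the remaining components are controlled by integrating the curvature in $t$), and only then apply the cut-off construction above.
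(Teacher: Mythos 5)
Your proof is correct, and it is in fact a streamlining of the paper's argument. The paper proceeds in two stages: first it replaces $\nabla$ by the pullback connection $\nabla'=\Phi^*\nabla$ along $\Phi=\id\times\chi_1$ (where $\chi_1$ equals the identity on $[0,1]$ and is constant on $[2,5]$), which makes the connection 1-form $t$-independent and annihilates its $\partial_t$-component on $B\times[2,5]$; only then does it multiply by a cutoff $\chi_2$. You skip the reparametrization entirely and multiply $\Gamma$ by a single cutoff $\chi$. Crucially, the curvature identity
\[
\mathcal R^{\tilde\nabla}=\chi'\,dt\wedge\Gamma+\chi\,\mathcal R^\nabla+(\chi^2-\chi)\,\Gamma\wedge\Gamma
\]
that you derive does not use $t$-independence of $\Gamma$ anywhere — it is the same identity the paper obtains after its extra step, and the same term-by-term estimate gives $\|\mathcal R^{\tilde\nabla}\|\leq C\epsilon$. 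All four required conclusions (including compatibility, since $\chi\Gamma$ remains $\mathfrak u(\mathcal L_A(V))$-valued) then hold, and your $\tilde\Gamma$-bound even comes with a slightly better constant, since no $\|\Phi_*\|$ stretching factor appears. So the reparametrization in the paper buys nothing for the lemma as stated; it yields the additional feature $\tilde\Gamma(\partial_t)=0$ on $[2,5]$, but that is not claimed and not used. Your closing remark about the hypothesis $\|\Gamma_x\|\leq\epsilon\|x\|$ being genuinely needed — because $\|\mathcal R^\nabla\|$ alone does not control the gauge-dependent 1-form $\Gamma$ — is also accurate and a good point to record.
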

\begin{proof}
	Let $\chi_1\colon[0,5]\to[0,5]$ be a smooth map satisfying $\chi_1|_{[0,1]}=\mathbbm 1$ and $\chi_1|_{[2,5]}=5$. Consider the
  map $\Phi=\id\times\chi_1\colon X\to X$. We define a new connection $\nabla'=\partial+\Gamma'$ by $\Gamma_v'=\Gamma_{\Phi_*v}$
  for every $v\in TX$. In particular, $\|\Gamma_v'\|=\|\Gamma_{\Phi_*v}\|\leq\epsilon\|\Phi_*v\|\leq c\epsilon\|v\|$ where
  $c=\|(\id\times\chi_1)_*\|_{\mathrm{sup}}\leq\sqrt{1+\|\dot\chi_1\|_{\mathrm{sup}}^2}$. Of course, $\nabla'=\nabla$ over 
  $B\times[0,1]$, but now $\Gamma_{(x,\tau)}'=\Gamma_{(x,0)}$ if $x\in TB$, $\tau\in T_t[2,5]$.

  It is easy to calculate that
  \begin{equation}\label{eq:curvature-christoffel-symbols}
		\mathcal R^{\nabla}(x,y)=(\partial_x\Gamma_y-\partial_y\Gamma_x)+[\Gamma_x,\Gamma_y]
  \end{equation}
  if $x$ and $y$ are vector fields on $X$ such that $[x,y]=0$. In particular, by a calculation in local coordinates (or
  alternatively by the fact that $\nabla'$ equals the pullback connection $\Phi^*\nabla$), this easily
  implies that $\mathcal R^{\nabla'}=\Phi^*\mathcal R^\nabla$, and therefore $\|\mathcal R^{\nabla'}\|\leq c'\epsilon$ for another
  constant $c'$. Thus, we may replace $\nabla$ by $\nabla'$, and in particular assume that $\Gamma_{(x,\tau)}=\Gamma_{(x,0)}$ over
  $B\times[2,5]$.

	Let $\chi_2\colon[2,5]\to[0,1]$ be a smooth map which satisfies $\chi_2|_{[2,3]}=1$ and $\chi_2|_{[4,5]}=0$. We let
	\[
		\tilde\Gamma_{(x,\tau)}=\chi_2(t)\Gamma_{(x,0)}
	\]
	if $\tau\in T_t[2,5]$. Then $\tilde\Gamma=0$ on $B\times[4,5]$ (and therefore $\tilde\nabla=\partial$ over $B\times[4,5]$ if we
	let $\tilde\nabla=\partial+\tilde\Gamma$), and $\tilde\Gamma_{(x,\tau)}=\Gamma_{(x,\tau)}$ if $\tau\in T_t[2,3]$, so 
	$\tilde\Gamma$ may be extended to $B\times[0,5]$ by letting $\tilde\Gamma=\Gamma$ on $B\times[0,2]$. 
	Obviously, still $\|\tilde\Gamma_x\|\leq\epsilon\|x\|$. It only remains to show that 
  $\|\mathcal R^{\tilde\nabla}\|\leq C\epsilon$.

	Now let $q=(t,p)$, $x=(v_1,\tau_1)$ and $y=(v_2,\tau_2)$ with $v_1,v_2\in T_pB$ and $\tau_1,\tau_2\in T_t[0,5]$. 
  Assume in addition that $\|x\|,\|y\|\leq 1$. Then \eqref{eq:curvature-christoffel-symbols} becomes
  \[
		\mathcal R^{\tilde\nabla}(x,y)=(\partial_{\tau_1}\chi_2)\Gamma_y-(\partial_{\tau_2}\chi_2)\Gamma_x
		+\chi_2\mathcal R^\nabla(x,y)+(\chi_2^2-\chi_2)[\Gamma_x,\Gamma_y].
  \]

	We have $\|\Gamma_x\|,\|\Gamma_y\|\leq\epsilon$, $\chi_2\leq 1$, $|\chi_2^2-\chi_2|\leq\frac 14$, $\|\mathcal
  R^\nabla(x,y)\|\leq\epsilon$ and $\partial_{\tau_i}\chi_2\leq\|\dot\chi_2\|_{\mathrm{sup}}$. Therefore,
	\[
    \|\mathcal R^{\tilde\nabla}(x,y)\|\leq 2\|\dot\chi_2\|_{\mathrm{sup}}\epsilon+\epsilon+\frac 12\epsilon^2\leq C\epsilon
	\]
  if $C=2\|\dot\chi_2\|_{\mathrm{sup}}+\frac 32$. Then $\|\mathcal R^{\tilde\nabla}\|\leq C\epsilon$ because every vector
  $\alpha\in\Lambda^2TM$ with $\|\alpha\|\leq 1$ can be written as $\alpha=x\wedge y$ with $\|x\|,\|y\|\leq 1$.
\end{proof}

We have gathered all the technical details to prove the main theorem of this section.

\begin{thm}\label{thm:almost-flat-bundles-have-small-curvature}
	Let $X$ be a smoothly triangulated Riemannian manifold. Then there are constants $C,\delta>0$ such that every $\epsilon$-flat 
  bundle $E\to X$ with $\epsilon\leq\delta$ there exists a compatible connection $\nabla$ on $E$ satisfying 
  $\|\mathcal R^\nabla\|\leq C\epsilon$.
\end{thm}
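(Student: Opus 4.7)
The plan is to build the connection inductively on a chain of open neighborhoods $N_0\subset N_1\subset\cdots\subset N_{\dim X}=X$ of the $k$-skeleta of $X$, using proposition \ref{prop:open-sets-transition-functions} for the initial data on each simplex and lemma \ref{lem:extension-of-almost-flat-connections} (Fukumoto's extension lemma) for the inductive extension step. Proposition \ref{prop:open-sets-transition-functions} supplies, for each simplex $\sigma$, an open set $U_\sigma\supset|\sigma|$ and a smooth trivialization $\Theta_\sigma\colon U_\sigma\times V\to E|_{U_\sigma}$ whose transition functions $\Psi_{\sigma,\rho}$ are smooth and $C\epsilon$-Lipschitz. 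Each $\Theta_\sigma$ determines a flat compatible connection $\partial^\sigma$ on $E|_{U_\sigma}$; the Lipschitz bound implies that any two such connections $\partial^\sigma,\partial^\rho$ differ, on $U_\sigma\cap U_\rho$ in either trivialization, by a connection $1$-form of operator norm $O(\epsilon)$. The task is then to glue these flat pieces into a single compatible connection without losing the $O(\epsilon)$-smallness of the curvature.

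On $N_0$ I take $\nabla_0=\partial^v$ on each (disjoint) vertex neighborhood, which is flat. Having constructed $\nabla_k$ on $N_k$ with $\|\mathcal R^{\nabla_k}\|\leq C_k\epsilon$, I enlarge $N_k$ to $N_{k+1}$ by attaching to it, for each $(k+1)$-simplex $\sigma$, a region $P_\sigma$ diffeomorphic to a product $B_\sigma\times[0,5]$ (obtained via the normal exponential map along the open star of $\sigma$ in the barycentric subdivision of $X$), such that $B_\sigma\times[0,1]\subset N_k$ and $P_\sigma\subset U_\sigma$. Trivializing via $\Theta_\sigma$, I write $\nabla_k|_{B_\sigma\times[0,1]}$ in the form $\partial+\Gamma$; the hypotheses $\|\Gamma_x\|\leq C'\epsilon\|x\|$ and $\|\mathcal R\|\leq C_k\epsilon$ required by Fukumoto's lemma hold because $\nabla_k$ differs from the flat connections $\partial^\tau$ for simplices $\tau$ of lower dimension by terms of norm $O(\epsilon)$ (by induction), and those in turn differ from $\partial^\sigma$ by $O(\epsilon)$ via the Lipschitz bound on $\Psi_{\sigma,\tau}$. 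Lemma \ref{lem:extension-of-almost-flat-connections} then extends the connection across $B_\sigma\times[0,5]$ so that it agrees with $\nabla_k$ on $B_\sigma\times[0,1]$ and equals the flat $\partial^\sigma$ on $B_\sigma\times[4,5]$, at the price of multiplying the curvature bound by a fixed constant depending only on the cutoff functions. Defining $\nabla_{k+1}$ to equal $\partial^\sigma$ on the remainder of $U_\sigma\cap N_{k+1}$ outside $P_\sigma$ then produces a smooth compatible connection on $N_{k+1}$ with $\|\mathcal R^{\nabla_{k+1}}\|\leq C_{k+1}\epsilon$. After $\dim X$ steps this yields the desired $\nabla$ on $X$.

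The main obstacle will be setting up the collar decomposition so that each $P_\sigma$ is honestly a product $B_\sigma\times[0,5]$ with the inner collar $B_\sigma\times[0,1]$ lying in $N_k$, and doing so in a way compatible with the smooth triangulation of $X$. Because $X$ is closed and the triangulation is fixed, one can choose the tubular neighborhoods of the open simplices of the barycentric subdivision so that all resulting geometric constants (collar widths, Lipschitz norms of cutoff functions, metric distortion of the normal exponential maps) are uniformly bounded in terms of $X$ alone. Once that scaffolding is in place, the verification of the Fukumoto lemma's hypotheses reduces to the $C\epsilon$-Lipschitz bound on the transition functions combined with the inductive hypothesis on $\|\mathcal R^{\nabla_k}\|$, and the curvature is multiplied by at most a fixed factor at each of the $\dim X$ inductive steps, yielding the final constants $C=C(X)$ and $\delta=\delta(X)$.
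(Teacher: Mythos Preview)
Your proposal is correct and follows essentially the same approach as the paper: inductive construction of $\nabla$ on neighborhoods of the skeleta, using proposition \ref{prop:open-sets-transition-functions} for the smooth $C\epsilon$-Lipschitz trivializations and lemma \ref{lem:extension-of-almost-flat-connections} on a product collar to interpolate between the already-built connection and the flat connection $\partial^\sigma$ in the new simplex's chart. The paper's collar is set up concretely as the radial annulus $\{1\leq\|x\|\leq 2\}\times D^{n-k}$ inside a tubular neighborhood $\mathring\Delta^\sigma\times\R^{n-k}$ of the open $k$-simplex, and it carries the bound $\|\Gamma_x\|\leq C_{k-1}\epsilon\|x\|$ explicitly as part of the inductive hypothesis (rather than deriving it each time), but your outline already contains all of these ingredients.
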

\begin{proof}
  Let $U_\sigma$, $\Theta_\sigma$ and $\Psi_{\sigma,\rho}$ be as in proposition \ref{prop:open-sets-transition-functions}.
	Assume that $\nabla$ has
	already been constructed on a neighborhood $N$ of the $(k-1)$-skeleton of $X$. We will also assume that if we write
  $\nabla=\partial+\Gamma$ with respect to any of the trivializations $\Theta_\sigma$, then $\|\Gamma_x\|\leq C_{k-1}\epsilon$ for 
  all $x\in TU_\sigma$. We have to construct $\nabla$ on a neighborhood of the $k$-skeleton.

	Let $X_k$ denote the union of all the interiors of $k$-simplices of $X$. This is a smooth submanifold, so it is the zero section
  in a tubular neighborhood $X_k\times\R^{n-k}\subset X$. Now let $\sigma$ be a simplex of dimension $k$, and let
	$T_\sigma:=\mathring\Delta^\sigma\times\R^{n-k}$. Note that for distinct $\sigma,\rho$ of dimension $k$, $T_\sigma$ and $T_\rho$ 
	are disjoint. We may assume that $T_\sigma\subset U_\sigma$. Since $N$ is a neighborhood of the
	$(k-1)$-skeleton, we can identify $\mathring\Delta^\sigma$ with $\R^k$ in such a way that $\{x\in\R^k:\|x\|\geq
  1\}\times\R^{n-k}$ is contained in $N$.

	Now $\{x\in\R^k:1\leq\|x\|\leq 2\}\times\{x\in\R^{n-k}:\|x\|\leq 1\}$ is canonically diffeomorphic to 
  $([1,2]\times S^{k-1})\times\{x\in\R^{n-k}:\|x\|\leq 1\}$ and we may assume that this space has the product metric with respect 
  to the latter product decomposition since any two metrics are in bi-Lipschitz correspondence on a compact set.
  Now by lemma \ref{lem:extension-of-almost-flat-connections} we may assume that $\nabla=\partial$ on a neighborhood of
  $\{1\}\times S^{k-1}\times\{x\in\R^{n-k}:\|x\|<1\}$ with respect to $\Theta_\sigma$, and therefore can be extended by 
  $\nabla=\partial$ on $\{x\in\R^k:\|x\|\leq 1\}\times\{x\in\R^{n-k}:\|x\|<1\}$. By 
  \ref{lem:extension-of-almost-flat-connections}, the induction 
  hypotheses are fulfilled for the new connection, and since the $T_\sigma$ are all disjoint, this construction may be performed 
  for all $k$-simplices $\sigma$ simultaneously. If now $\rho$ is another simplex such that $U_\rho$ and $U_\sigma$ intersect,
  we may write $\nabla=\partial+\Gamma^\rho$ with respect to $\Theta_\rho$. Using the $C\epsilon$-Lipschitzness of 
  $\Psi_{\sigma,\rho}$ and the fact that $\Gamma^\rho_X=\partial_X\Psi_{\sigma,\rho}+\Gamma_X\Psi_{\sigma,\rho}$ and that 
  $\Psi$ is unitary, it follows that $\|\Gamma^\rho_X\|\leq C_k\epsilon\|X\|$.
\end{proof}

\section{Pullbacks of almost flat bundles}

\label{sec:pullbacks}

In this section, we investigate pullbacks of almost flat bundles. It is rather easy to see that pullbacks of almost flat bundles
are still almost flat. However, if a map induces an isomorphism on fundamental groups, there is also a sort of converse for this
statement which we will prove. The following statement asserts that almost flat bundles are pulled back to almost flat bundles. 

\begin{prop}\label{prop:pullback-of-almost-flat-bundles}
  Let $f\colon X\to Y$ be a continuous map between simplicial complexes, and suppose that $X$ is finite-dimensional.
  Then there are constants $C,\delta>0$ such that for all $\epsilon$-flat bundles $E\to|Y|$ with $\epsilon\leq\delta$ the bundle
  $f^*E\to|X|$ admits trivializations making it an $C\epsilon$-flat bundle.
\end{prop}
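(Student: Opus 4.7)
The plan is to reduce to the case of a simplicial map via simplicial approximation, carry out the pull-back combinatorially where the simplicial structure of $X$ is compatible with that of $Y$, and then translate back to the original triangulation using the subdivision result of Proposition \ref{prop:subdivision}.

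First I would invoke the simplicial approximation theorem: since $X$ is finite-dimensional, there exists an integer $n\geq 0$ and a simplicial map $g\colon\mathcal{S}^n(X)\to Y$ which is a simplicial approximation of $f$. In particular $g$ is homotopic to $f$ under the identification $|\mathcal{S}^n(X)|\cong|X|$, so $f^*E\cong g^*E$ as topological Hilbert $A$-module bundles over $|X|$. It therefore suffices to equip $g^*E$ with a $C\epsilon$-flat family of trivializations with respect to the triangulation of $X$, since any such family can be transported across a bundle isomorphism $f^*E\cong g^*E$ to give a $C\epsilon$-flat family on $f^*E$.

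Next I would exploit the simplicial structure of $g$: for every simplex $\tau\in\mathcal{S}^n(X)$, the image $g(\tau)$ is a simplex of $Y$ and the restriction $g|_{|\tau|}\colon|\tau|\to|g(\tau)|$ is the affine extension of a vertex map. A direct computation (the vertices of a standard simplex lie at pairwise distance $\sqrt{2}$, and any such affine map is determined by a linear map on a finite-dimensional Euclidean space whose operator norm is bounded in terms of $\dim\tau$) shows that $g|_{|\tau|}$ is $L$-Lipschitz for a constant $L$ depending only on $\dim X$. Defining trivializations $\tilde\Phi_\tau:=g^*\Phi_{g(\tau)}$ on $g^*E|_{|\tau|}$, the associated transition functions are precisely $\Psi_{g(\tau')\subset g(\tau)}\circ g|_{|\tau'|}$, which are $L\epsilon$-Lipschitz. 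Hence $g^*E$ is an $L\epsilon$-flat bundle with respect to the triangulation $\mathcal{S}^n(X)$.

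Finally I would iterate the second part of Proposition \ref{prop:subdivision} exactly $n$ times, moving stepwise from the triangulation $\mathcal{S}^n(X)$ down to $\mathcal{S}^{n-1}(X)$, $\ldots$, and ultimately to $X$. At each step the Lipschitz constant of the transition functions is multiplied by a factor $C_2$ depending only on $\dim X$, and the hypothesis of the proposition is retained as long as the current Lipschitz constant remains below $\delta(\dim X)$. Choosing $C:=LC_2^n$ and $\delta:=\delta(\dim X)/C$ gives the desired constants. The principal obstacle is the simplicial approximation step when $X$ is not compact: for a general infinite finite-dimensional simplicial complex one has to argue that a uniform iterated barycentric subdivision suffices, which requires a local finiteness hypothesis or a more careful simplex-by-simplex construction, but in either case the constants $C,\delta$ depend only on $f$ and on the ambient complexes, as required.
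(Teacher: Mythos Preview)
Your approach is essentially identical to the paper's: reduce to a simplicial map via simplicial approximation and Proposition \ref{prop:subdivision}, then pull back the trivializations simplexwise. You are in fact slightly more careful than the paper, which asserts that a simplicial map is $1$-Lipschitz on each simplex (true in the $\ell^1$ metric but not in the Euclidean metric on the standard simplex used here---e.g.\ the map $\Delta^3\to\Delta^1$ collapsing two pairs of vertices has Lipschitz constant $\sqrt 2$), and you rightly flag the non-compactness subtlety in the simplicial approximation step that the paper leaves implicit.
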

\begin{proof}
  By proposition \ref{prop:subdivision}, we may replace $X$ by a repeated barycentric subdivision of itself, and therefore we may
  assume that $f$ is simplicial. Now the first statement immediately follows by pulling back the trivializations over the
  simplices of $Y$, because a simplicial map is 1-Lipschitz on every simplex.
\end{proof}

In particular, the property of being an almost flat bundle does not depend very much on the choice of triangulation, as wee see if
we take $f=\id$ in proposition \ref{prop:pullback-of-almost-flat-bundles}:

\begin{korl}
  Suppose $X$ and $X'$ are finite-dimensional triangulations of the same space, i.~e. $|X|\cong|X'|$. Then there are constants 
  $C,\delta>0$ such that for every $\epsilon\leq\delta$, every $\epsilon$-flat bundle with respect to $X$ admits the structure of 
  a $C\epsilon$-flat bundle with respect to $X'$.\qed
\end{korl}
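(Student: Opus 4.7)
The plan is to apply Proposition \ref{prop:pullback-of-almost-flat-bundles} directly, taking $f$ to be the homeomorphism $|X'|\to|X|$ provided by the hypothesis $|X|\cong|X'|$. Viewing this homeomorphism as a continuous map between the simplicial complexes $X'$ and $X$, and using that $X'$ is finite-dimensional, the proposition yields constants $C,\delta>0$ such that for every $\epsilon$-flat bundle $E\to|X|$ with $\epsilon\leq\delta$, the pullback $f^*E$ admits trivializations making it a $C\epsilon$-flat bundle with respect to $X'$.

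The only thing to observe is that, since $f$ is a homeomorphism (indeed the identity on the common underlying space), the pullback $f^*E\to|X'|$ is canonically identified with the original bundle $E$. Thus the $C\epsilon$-flat family of trivializations produced by the proposition is in fact a family of trivializations of $E$ itself, indexed over the simplices of $X'$. This gives $E$ the structure of a $C\epsilon$-flat bundle with respect to the triangulation $X'$, as claimed.

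There is essentially no obstacle here beyond the one already handled inside the proof of Proposition \ref{prop:pullback-of-almost-flat-bundles}, namely the step of replacing $X'$ by a sufficiently fine iterated barycentric subdivision so that $f=\id$ becomes simplicial into $X$. Proposition \ref{prop:subdivision} guarantees that the resulting $\epsilon$-flat structure with respect to the subdivision then descends (up to a dimension-dependent constant) to one with respect to $X'$ itself, which is absorbed into the constants $C$ and $\delta$. Hence the corollary reduces to a single application of the preceding proposition with $f=\id$.
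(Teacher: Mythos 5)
Your proposal is correct and matches the paper's own reasoning: the corollary is stated as an immediate consequence of Proposition \ref{prop:pullback-of-almost-flat-bundles} by taking $f=\id$, which is exactly what you do. Your added remark about the canonical identification $f^*E\cong E$ and the subdivision step being absorbed inside the earlier proof is accurate and fills in what the paper leaves implicit.
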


Conversely, if $f$ induces an isomorphism of fundamental groups, and if $X$ and $Y$ are finite, then every almost flat class over 
$|X|$ is pulled back from an almost flat bundle over $|X|$. This clarifies a point left open in 
\cite{mishchenko-teleman-almost-flat-bundles}. In fact, we are going to prove a slightly stronger statement. 

For the formulation of this statement, we give a few auxiliary definitions. Recall from \cite{brunnbauer-hanke-large-and-small}
that a \emph{$\pi_1$-surjective subcomplex} of a simplicial complex $Y$ is a subcomplex $X\subset Y$ such that the inclusion
induces an epimorphism $\pi_1(X,x_0)\to\pi_1(Y,x_0)$ for every vertex $x_0\in X_0$. Denote by $\Omega_{X\subset Y}$ the set of all 
simplicial loops in $X$ which
are nullhomotopic as loops in $Y$. Consider a map $c\colon\Omega_{X\subset Y}\to\R_{>0}$. Now a \emph{$(c,\epsilon)$-flat bundle}
is an $\epsilon$-flat bundle $E\to|X|$ such that $\|T_\Gamma-\id\|\leq c(\Gamma)\epsilon$ for every 
$\Gamma\in\Omega_{X\subset Y}$. We shall prove a special case first.

\begin{lem}\label{lem:extension-of-bundles-pi1-surjective}
  Let $X\subset Y$ be a finite connected $\pi_1$-surjective subcomplex, let $X'\subset Y$ be another subcomplex containing $X$,
  and let $c\colon\Omega_{X\subset Y}\to\R_{>0}$ be a map as described above. Then there are constants $C,\delta>0$ and a map 
  $c'\colon\Omega_{X'\subset Y}\to\R_{>0}$ such that every $(c,\epsilon)$-flat bundle $E\to|X|$ is isomorphic to the restriction 
  of a $(c',C\epsilon)$-flat bundle $E'\to|X'|$ provided that $\epsilon\leq\delta$.
\end{lem}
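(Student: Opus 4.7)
The plan is to construct $E'$ by extending through the skeleta of $X'$. Fix a basepoint $x_0 \in X_0$ and, for each vertex $v \in X'_0$, choose a simplicial path $\gamma_v$ in $X'$ from $x_0$ to $v$, taking $\gamma_v \subset X$ whenever $v \in X_0$ (possible since $X$ is connected). For each edge $e = \{v, w\} \in X'_1$, form the simplicial loop $\ell_e := \gamma_v \cdot (v, w) \cdot \gamma_w^{-1}$ at $x_0$. If $e \in X_1$, set $\ell_e^X := \ell_e$, which lies in $X$. If $e \in X'_1 \setminus X_1$, the $\pi_1$-surjectivity of $X \subset Y$ supplies a simplicial loop $\ell_e^X \subset X$ based at $x_0$ that is homotopic to $\ell_e$ in $Y$; fix such a choice once and for all.

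On the 0-skeleton, set $E'_v := E_v$ for $v \in X_0$ and $E'_v := E_{x_0}$ otherwise. On the 1-skeleton, retain the trivializations of $E$ on $X_1$ and, for each $e \in X'_1 \setminus X_1$, define the transport along $e$ so that the $E'$-monodromy around the loop $\ell_e$ equals the $E$-monodromy $T_{\ell_e^X}$ of its substitute; any trivialization $\Phi_e$ of $E'|_{|e|}$ realising this transport will do.

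For every 2-simplex $\sigma \in X'_2$, the boundary $\partial\sigma$ is null-homotopic in $Y$ because it bounds $\sigma \subset Y$. Conjugating by the tree path $\gamma_u$ for one vertex $u$ of $\sigma$ and inserting trivial pieces $\gamma_v \cdot \gamma_v^{-1}$, one sees that $\gamma_u \cdot \partial \sigma \cdot \gamma_u^{-1}$ equals, as a simplicial loop, the product of the three $\ell_e^{\pm 1}$ corresponding to the edges of $\sigma$. Replacing each $\ell_e$ by its substitute $\ell_e^X$ produces a loop $\Gamma_\sigma^X \subset X$ which is $Y$-homotopic to $\partial\sigma$, hence null-homotopic in $Y$, so $\Gamma_\sigma^X \in \Omega_{X \subset Y}$. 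By our construction, the $E'$-monodromy of $\partial\sigma$ is conjugate via the unitary tree transport to the $E$-monodromy of $\Gamma_\sigma^X$, and hence bounded by $c(\Gamma_\sigma^X)\,\epsilon$. For $\epsilon$ small enough, Theorem \ref{thm:extensions-of-almost-flat-bundles-2} then extends $E'$ to a $C_2\epsilon$-flat bundle on the 2-skeleton, and iterated application of Theorem \ref{thm:extension-of-almost-flat-bundles-1} extends it through the higher skeleta.

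To define $c'$, apply the same substitution procedure to an arbitrary $\Gamma \in \Omega_{X' \subset Y}$: the resulting $\Gamma^X$ lies in $\Omega_{X \subset Y}$, and the $E'$-monodromy of $\Gamma$ equals, up to unitary tree-path conjugation, the $E$-monodromy of $\Gamma^X$, so setting $c'(\Gamma) := c(\Gamma^X)/C$ yields the desired bound. The main obstacle is twofold. First is the uniformity of constants: the companion loops $\ell_e^X$ must be chosen so that $\sup_{\sigma \in X'_2} c(\Gamma_\sigma^X)$ is finite, which is automatic when $X'$ is finite but requires more care in general, since this supremum determines how small $\delta$ must be taken. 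Second is verifying that the monodromy identifications genuinely hold as equalities after tracking the tree-path conjugations carefully, rather than merely as approximations — without this, the substitution argument only yields an $\epsilon$-close match and the bounds on $\partial\sigma$ would deteriorate at each step of the induction.
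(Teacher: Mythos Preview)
Your approach is essentially the same as the paper's, just organised globally rather than simplex-by-simplex. The paper adds one cell at a time: first an isolated vertex together with a connecting edge (trivial extension, $C=1$), then one $1$-simplex at a time using $\pi_1$-surjectivity exactly as you do (choose a substitute path in the current complex, and prescribe the new edge transport so the comparison loop has \emph{exactly} identity monodromy, again $C=1$), then $2$-simplices via Theorem~\ref{thm:extensions-of-almost-flat-bundles-2} and higher simplices via Theorem~\ref{thm:extension-of-almost-flat-bundles-1}. Since vertex and edge steps contribute no multiplicative constant, the paper's induction makes visible that the only non-trivial constants arise from the skeleton-extension theorems and that the argument terminates once $X'$ (or at least $X'\setminus X$) is finite---an assumption the paper uses implicitly and that your first ``obstacle'' correctly isolates.

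One point where your global write-up is imprecise and the paper's inductive phrasing is cleaner: you define transport along a new edge $e$ so that the $E'$-monodromy of $\ell_e=\gamma_v\cdot(v,w)\cdot\gamma_w^{-1}$ matches a prescribed unitary, but your paths $\gamma_v,\gamma_w$ lie only in $X'$ and may traverse other new edges whose transport is not yet fixed, so the prescription is circular as stated. The fix---which your phrase ``tree-path conjugations'' suggests you have in mind---is to first choose a maximal tree $T'$ in $X'$ extending a maximal tree $T$ of $X$, take each $\gamma_v$ inside $T'$, set transport along new tree edges to be the identity, and only then define transport on the remaining non-tree edges. The paper's one-simplex-at-a-time induction avoids this bookkeeping automatically: when an edge $\{p,q\}$ is added, both endpoints already lie in the current complex, so the substitute path can be taken there. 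Your second obstacle is also well spotted; both arguments hinge on the substitution producing \emph{exact} equalities of monodromy, and the paper achieves this precisely by prescribing the new transport so the comparison loop is exactly trivial.
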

\begin{proof}
  Suppose first that $X'$ arises from $X$ by adding a single vertex $p$ and an edge $\sigma=\{p,q\}$ where $q\in X_0$. In this
  case, we may choose arbitrary trivializations over $p$ and over $\sigma$. If $\Gamma\in\Omega_{X'\subset Y}$, transport along
  $\Gamma$ is the same thing as transport along the curve $\Gamma'$ which arises from $\Gamma$ by elimination of all occurences of 
  the piece $(q,p,q)$. Thus, we may set $c'(\Gamma)=c(\Gamma')$, $C=1$ and $\delta$ arbitrary.

  This shows that we may assume that $X'$ and $X$ have the same set of vertices. By induction on the number of simplices in
  $X'-X$, we may further assume that $X'$ arises from $X$ by adding a single simplex $\sigma$.

  If $\sigma=\{p,q\}$ is a 1-simplex, consider a simplicial path $\Gamma$ from $p$ to $q$ in $X$. Then $\Gamma'=\Gamma*(q,p)$ is a
  simplicial loop based at $p$. Since $X\subset Y$ is $\pi_1$-surjective, there is another simplicial loop $\Gamma''$, based
  at $p$, which is contained in $X$ and which is homotopic to $\Gamma'$ in $Y$. In particular, the path $\bar\Gamma''*\Gamma'$ is
  nullhomotopic in $Y$, where $\bar\Gamma''$ is the path which traverses $\Gamma''$ in the opposite direction. Thus, we may assume
  that already $\Gamma'$ is nullhomotopic in $Y$. We choose an extension of the bundle $E$, and a trivialization over $\sigma$
  such that transport along $\Gamma'$ equals the identity map. In particular, transport along $\Gamma$ and along the path $(p,q)$
  are equal. Now suppose $\tilde\Gamma$ is an arbitrary loop in $X'$ which is contractible in $Y$. Denote by $\tilde\Gamma'$ the
  path which arises from $\tilde\Gamma$ by substituting every occurence of $(p,q)$ by $\Gamma$, and every occurence of $(q,p)$ by
  the opposite $\Gamma'$. Since $(p,q)$ and $\Gamma$ are homotopic in $Y$, also $\tilde\Gamma'$ is nullhomotopic in $Y$, and 
  transport along $\tilde\Gamma'$ and along $\tilde\Gamma$ are equal. Thus, the statement of the lemma follows with
  $c'(\tilde\Gamma)=c(\tilde\Gamma')$, $C=1$ and $\delta$ arbitrary.

  If the dimension of $\sigma$ is 2, the statement follows using theorem \ref{thm:extensions-of-almost-flat-bundles-2}. If the
  dimension of $\sigma$ is at least 3, we may use theorem \ref{thm:extension-of-almost-flat-bundles-1} to get the desired
  conclusion.
\end{proof}

We are now able to prove the statement in full generality.

\begin{thm}\label{thm:functoriality-almost-flat-bundles}
  Let $X\subset Y$ and $X'\subset Y'$ be finite connected $\pi_1$-surjective subcomplexes, and let $c\colon\Omega_{X\subset
  Y}\to\R_{>0}$ be a map. Suppose $f\colon Y\to Y'$ is a map which induces an isomorphism on fundamental groups, and where
  $f(X)\subset X'$. Then there are constants $C,\delta>0$ and a map $c'\colon\Omega_{X'\subset Y'}\to\R_{>0}$ such that every
  $(c,\epsilon)$-flat bundle $E\to|X|$ is isomorphic to the pullback of a $(c',C\epsilon)$-flat bundle $E'\to|X'|$ along $f|_X$
  provided that $\epsilon\leq\delta$.
\end{thm}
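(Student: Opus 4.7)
The plan is to factor the problem through the mapping cylinder of $f$ and to apply Lemma \ref{lem:extension-of-bundles-pi1-surjective} once on an enlarged complex. After a suitable subdivision (and invoking Proposition \ref{prop:subdivision} to absorb the effect on the constants), I will take $f$ to be a simplicial map with $f(X) \subset X'$. Form the mapping cylinder $Z_f = (Y \times [0,1]) \sqcup Y' / ((y,1) \sim f(y))$ and equip it with a simplicial structure that extends the given ones on $Y = Y \times \{0\}$ and on $Y'$. The canonical deformation retraction $r\colon Z_f \to Y'$, $(y,t) \mapsto f(y)$, exhibits $i_{Y'}\colon Y' \hookrightarrow Z_f$ as a homotopy equivalence, and since $r \circ i_Y = f$ is a $\pi_1$-isomorphism, so is $i_Y\colon Y \hookrightarrow Z_f$.

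Consequently the inclusion $X \subset Z_f$ is $\pi_1$-surjective (as the composition of the $\pi_1$-surjection $X \subset Y$ with the $\pi_1$-isomorphism $Y \subset Z_f$), and $\Omega_{X \subset Y} = \Omega_{X \subset Z_f}$, because $\pi_1(Y) \to \pi_1(Z_f)$ is injective. I will therefore regard $E \to |X|$ as a $(c, \epsilon)$-flat bundle with respect to the ambient complex $Z_f$ and apply Lemma \ref{lem:extension-of-bundles-pi1-surjective} to the pair $X \subset Z_f$ with enlarged subcomplex equal to $Z_f$ itself. This yields constants $C_1, \delta_1 > 0$ and a $(c_1, C_1 \epsilon)$-flat bundle $\tilde E \to |Z_f|$ extending $E$, where $c_1 \colon \Omega_{Z_f \subset Z_f} \to \R_{>0}$.

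Next I set $E' := \tilde E|_{|X'|}$. For any $\Gamma \in \Omega_{X' \subset Y'}$, the null-homotopy of $\Gamma$ in $Y'$ pushes forward to a null-homotopy in $Z_f$, so $\Gamma$ lies in $\Omega_{Z_f \subset Z_f}$; setting $c'(\Gamma) := c_1(\Gamma)$ makes $E'$ a $(c', C_1\epsilon)$-flat bundle over $|X'|$. It remains to identify $E$ with $f|_X^* E'$. For this, the cylinder coordinate provides a canonical homotopy $H\colon X \times [0,1] \to Z_f$, $H(x,t) = [(x,t)]$, from $i_X$ to $i_{X'} \circ f|_X$ (well-defined because $f(X) \subset X'$). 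Since fibre bundles pull back to isomorphic bundles along homotopic maps, $E = i_X^* \tilde E \cong (i_{X'} \circ f|_X)^* \tilde E = f|_X^* E'$, which completes the proof with $C = C_1$ and $\delta = \delta_1$.

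The step I expect to be the most delicate is the preliminary reduction: one must subdivide $Y$ and $Y'$ (and hence $X$, $X'$) so that $f$ is approximated by a simplicial map still satisfying $f(X) \subset X'$ and so that a simplicial structure on $Z_f$ extending the given ones becomes available. Once this bookkeeping is in place, the geometric content is entirely carried by the fact that both $i_Y$ and $i_{Y'}$ are $\pi_1$-isomorphisms, and the argument reduces to a single invocation of Lemma \ref{lem:extension-of-bundles-pi1-surjective} together with the elementary cylinder homotopy inside $Z_f$.
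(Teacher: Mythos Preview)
Your overall strategy---pass to a mapping cylinder, identify $\Omega_{X\subset Y}$ with $\Omega$ in the cylinder, and invoke Lemma~\ref{lem:extension-of-bundles-pi1-surjective} once---is exactly the paper's. The cylinder homotopy you spell out to identify $E$ with $f|_X^*E'$ is correct and in fact more explicit than what the paper writes.

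There is, however, a genuine gap. You take the mapping cylinder $Z_f$ of the full map $f\colon Y\to Y'$ and then apply Lemma~\ref{lem:extension-of-bundles-pi1-surjective} with enlarged subcomplex equal to $Z_f$ itself. But the hypotheses of the theorem only say that $X$ and $X'$ are finite; the ambient complexes $Y$ and $Y'$ need not be (and in the intended application to $B\pi_1(M)$ they are not). The proof of Lemma~\ref{lem:extension-of-bundles-pi1-surjective} proceeds by induction on the number of simplices in the enlargement, so it yields constants $C,\delta$ only when the enlarged subcomplex is finite; extending over all of a possibly infinite, possibly infinite-dimensional $Z_f$ gives no uniform $C,\delta$. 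A related problem is the preliminary step you flag yourself: simplicial approximation of $f$ on all of $Y$ is not available in general when $Y$ is infinite.

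The repair is minimal and recovers the paper's argument: form the mapping cylinder of $f|_X\colon X\to Y'$ instead (only $X$ needs to be subdivided, which Proposition~\ref{prop:subdivision} handles), take the ambient complex to be $M=(X\times[0,1])\sqcup Y'/\!\sim$, and extend only to the \emph{finite} subcomplex $M'=(X\times[0,1])\cup X'\subset M$. Your observations that $\Omega_{X\subset Y}=\Omega_{X\subset M}$ (using that $f$ is a $\pi_1$-isomorphism) and that $\Omega_{X'\subset Y'}\subset\Omega_{M'\subset M}$ carry over verbatim, and the cylinder homotopy lives inside $M'$, so the rest of your proof goes through unchanged.
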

\begin{proof}
  Passing to a subdivision of $X$, we may assume that $f|_X$ is simplicial. Consider the mapping cylinder $M=X\times[0,1]
  \sqcup Y'/\sim$, where $\sim$ is the equivalence relation generated by the identification $(x,1)\sim f(x)$. Now $M$ contains
  $Y'$ as a deformation retract, and the composition $X\subset M\to Y'$ equals $f|_X$. This implies that $X\times\{0\}$ is a
  $\pi_1$-surjective subcomplex of $M$, and that $\Omega_{X\subset Y}=\Omega_{X\subset M}$. On the other hand, also
  $M'=X\times[0,1]\sqcup X'/\sim$ is a finite connected $\pi_1$-surjective subcomplex of $M$ which contains both $X\times\{0\}$ 
  and $X'$. Thus, the statement of the theorem follows from lemma \ref{lem:extension-of-bundles-pi1-surjective}, applied to
  $X\times\{0\}$ and $M'$ as subcomplexes of $M$.
\end{proof}

\section{Homological invariance of infinite K-area}

\label{sec:homological-invariance}

In this chapter, we will explain how to generalize the concept of infinite K-area of closed Riemannian manifolds to homology
classes of simplicial complexes. The obvious definition is that a homology class $\eta\in H_*(X;G)$ is to be of infinite K-area if
for every $\epsilon>0$ there are $\epsilon$-flat bundles whose Chern classes detect $\eta$. However, in view of the finiteness
assumption of theorem \ref{thm:functoriality-almost-flat-bundles} it turns out that it is more useful to consider bundles which
are defined only on finite $\pi_1$-surjective subcomplexes.

\begin{dfn}
  Let $X$ be any simplicial complex with finitely generated fundamental group, and let $\eta\in H_{2*}(X;G)$. Consider a 
  finite connected $\pi_1$-surjective subcomplex $S\subset X$. Suppose 
  that there is a class $\eta_S\in H_{2*}(S;G)$ such that $\eta=\iota_*\eta_S$. Now $\eta$ is said to have \emph{infinite K-area} 
  if there is a function $c\colon\Omega_{S\subset X}\to\R_{>0}$ such that for every $\epsilon>0$ there is an $(c,\epsilon)$-flat 
  Hermitian bundle $E\to S$ such that if $f\colon S\to BU$ classifies the bundle $E$, then $f_*\eta_S\neq 0\in H_n(BU;G)$. 
\end{dfn}

Suppose $S$ and $S'$ are two different finite connected $\pi_1$-surjective subcomplexes and that there are classes $\eta_S\in
H_{2*}(S;G)$ and $\eta_{S'}\in H_{2*}(S';G)$ both mapping to $\eta$. Then there is a larger finite connected $\pi_1$-surjective
subcomplex $T$ containing both $S$ and $S'$, such that $\eta_S$ and $\eta_{S'}$ map to the same class in $H_{2*}(T;G)$. Thus,
lemma \ref{lem:extension-of-bundles-pi1-surjective} immediately implies that the definition of infinite K-area is independent of
the choice of $S$.

We now have the following immediate consequences of proposition \ref{prop:pullback-of-almost-flat-bundles} and theorem
\ref{thm:functoriality-almost-flat-bundles}.

\begin{thm}\label{thm:functoriality-infinite-karea}
  Let $f\colon X\to Y$ be a map between simplicial complexes, and consider $\eta\in H_{2*}(X;G)$.
  \begin{itemize}
    \item If $f_*\eta$ has infinite K-area, then so has $\eta$,
    \item If $\eta$ has infinite K-area and $f_*\colon\pi_1X\to\pi_1Y$ is an isomorphism, then also $f_*\eta$ has infinite K-area.
      \qed
  \end{itemize}
\end{thm}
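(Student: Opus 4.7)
The plan is to deduce both statements from the pullback results of section \ref{sec:pullbacks}, combined with the homological bookkeeping already used to show that the definition of infinite K-area is independent of the choice of witnessing subcomplex. Part (a) is the easier direction, needing only that pullback preserves almost flatness (proposition \ref{prop:pullback-of-almost-flat-bundles}); part (b) is where the $\pi_1$-isomorphism hypothesis must enter, via theorem \ref{thm:functoriality-almost-flat-bundles}.

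For part (a), assume $f_*\eta$ has infinite K-area. I would first pick a finite connected $\pi_1$-surjective subcomplex $S\subset X$ containing a cycle representative of $\eta$, yielding a lift $\eta_S\in H_{2*}(S;G)$. Replacing $X$ by a sufficiently fine common barycentric subdivision, using proposition \ref{prop:subdivision} to preserve almost-flatness up to a constant, I may assume that $f|_S$ is simplicial, so that $f(S)$ is a finite subcomplex of $Y$. I then enlarge $f(S)$ to a finite connected $\pi_1$-surjective subcomplex $S'\subset Y$ also containing the given witnessing subcomplex for $f_*\eta$, and if necessary still further so that $\eta_{S'}:=(f|_S)_*\eta_S$ and the original witnessing lift of $f_*\eta$ represent the same class in $H_{2*}(S';G)$; the witnessing bundles are carried across these enlargements by lemma \ref{lem:extension-of-bundles-pi1-surjective}, at the cost of replacing $\epsilon$ by $C\epsilon$ and $c_0'$ by a new function $c'$. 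For each small $\epsilon>0$ this produces a $(c',\epsilon)$-flat bundle $E'\to S'$ whose classifying map $g\colon S'\to BU$ detects $\eta_{S'}$. The pullback $(f|_S)^*E'$ is $(c,C\epsilon)$-flat on $S$ with $c(\Gamma):=c'(f\circ\Gamma)$, by proposition \ref{prop:pullback-of-almost-flat-bundles} together with the naturality of simplicial transport, and its classifying map $g\circ f|_S$ satisfies $(g\circ f|_S)_*\eta_S=g_*\eta_{S'}\neq 0$.

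For part (b), assume $\eta$ has infinite K-area, witnessed by $S\subset X$, $\eta_S$, a function $c\colon\Omega_{S\subset X}\to\R_{>0}$, and a family of $(c,\epsilon)$-flat bundles $E_\epsilon\to S$. Since $f_*\colon\pi_1X\to\pi_1Y$ is an isomorphism, $\pi_1Y$ is finitely generated, and after subdivision of $X$ we may assume $f|_S$ is simplicial; then $f(S)$ extends to a finite connected $\pi_1$-surjective subcomplex $S'\subset Y$. Theorem \ref{thm:functoriality-almost-flat-bundles}, applied to $S\subset X$, $S'\subset Y$ and the map $f$, produces constants $C,\delta>0$ and a function $c'\colon\Omega_{S'\subset Y}\to\R_{>0}$ such that every $(c,\epsilon)$-flat $E_\epsilon$ with $\epsilon\leq\delta$ is isomorphic to $(f|_S)^*E'_\epsilon$ for some $(c',C\epsilon)$-flat bundle $E'_\epsilon\to S'$. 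Setting $\eta_{S'}:=(f|_S)_*\eta_S$ gives $\iota_*\eta_{S'}=f_*\iota_*\eta_S=f_*\eta$, and if $g'$ classifies $E'_\epsilon$, then $g'\circ f|_S$ classifies $E_\epsilon$ up to homotopy, so $g'_*\eta_{S'}=(g'\circ f|_S)_*\eta_S\neq 0$ by the hypothesis on $E_\epsilon$. Hence $S'$, $\eta_{S'}$, $c'$, and the $E'_\epsilon$ witness infinite K-area of $f_*\eta$.

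The one real bit of friction, rather than a genuine obstacle, is the homological bookkeeping in part (a): aligning the a priori given detectable lift of $f_*\eta$ with the natural candidate $(f|_S)_*\eta_S$ by passing to a sufficiently large common $\pi_1$-surjective subcomplex of $Y$. This is essentially the same alignment argument the paper already uses to show independence of the definition of infinite K-area from the choice of subcomplex, and once one trusts lemma \ref{lem:extension-of-bundles-pi1-surjective} no new idea is required.
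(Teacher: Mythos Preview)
Your proposal is correct and follows exactly the route the paper intends: part (a) via proposition \ref{prop:pullback-of-almost-flat-bundles} and part (b) via theorem \ref{thm:functoriality-almost-flat-bundles}, with the subcomplex-independence argument handling the homological alignment. The paper itself gives no proof beyond the sentence ``immediate consequences of proposition \ref{prop:pullback-of-almost-flat-bundles} and theorem \ref{thm:functoriality-almost-flat-bundles}'' and a \qed, so your write-up is simply a careful unpacking of that one line.
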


\begin{thm}
  An even-dimensional closed oriented manifold $M^{2n}$ has infinite K-area if and only if its fundamental class 
  $[M]\in H_{2n}(M;\Q)$ has infinite K-area.
\end{thm}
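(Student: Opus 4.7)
The plan is to combine the two translation theorems of Section~\ref{sec:riemannian-manifolds}, namely Theorem~\ref{thm:bundles-with-connection-are-epsilon-small} and Theorem~\ref{thm:almost-flat-bundles-have-small-curvature}, with the standard cohomological interpretation of Chern numbers. First I fix a smooth triangulation of $M$, so that $M$ itself serves as the finite $\pi_1$-surjective subcomplex $S$ in the definition of infinite K-area, with $\eta_S=[M]$. Before going further I recall that since $H^*(BU;\Q)$ is the polynomial algebra on the universal Chern classes $c_1,c_2,\ldots$ with $\deg c_i=2i$, its $2n$-th cohomology is spanned by monomials of total degree $n$; consequently, for any Hermitian bundle $E\to M$ with classifying map $f\colon M\to BU$, the condition $f_*[M]\neq 0\in H_{2n}(BU;\Q)$ is equivalent to at least one Chern number $\langle c_{k_1}(E)\smile\cdots\smile c_{k_r}(E),[M]\rangle$ being non-zero.

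For the implication \emph{Gromov's K-area $\Rightarrow$ homological K-area}, the first step is to fabricate a universal control function $c\colon\Omega_{M\subset M}\to\R_{>0}$, independent of the bundle. Denoting by $c_n$ and $\delta_n$ the constants from Theorem~\ref{thm:transport-along-contractible-loops}, I set $c(\Gamma)=\max\bigl(c_{\hc(\Gamma)},\,2/\delta_{\hc(\Gamma)}\bigr)$. With this choice any $\epsilon$-flat bundle on $M$ is automatically $(c,\epsilon)$-flat: for $\epsilon\leq\delta_{\hc(\Gamma)}$ this is the content of Theorem~\ref{thm:transport-along-contractible-loops}, while for larger $\epsilon$ one has $c(\Gamma)\epsilon>2\geq\|T_\Gamma-\id\|$ by unitarity of parallel transport. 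Now, for given $\epsilon>0$, the Gromov hypothesis produces a smooth Hermitian bundle $(E,\nabla)$ with $\|\mathcal R^\nabla\|_g$ as small as desired and with at least one non-vanishing Chern number; provided $\|\mathcal R^\nabla\|_g$ is sufficiently small, Theorem~\ref{thm:bundles-with-connection-are-epsilon-small} produces a combinatorially $\epsilon$-flat family of trivializations on $E$, so that $E$ becomes a $(c,\epsilon)$-flat bundle whose classifying map detects $[M]$ via the cohomological equivalence above. This establishes that $[M]$ has infinite K-area.

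For the converse, starting from the witness function $c$ I have, for each $\epsilon>0$, a $(c,\epsilon)$-flat bundle $E_\epsilon\to M$ whose classifying map $f$ satisfies $f_*[M]\neq 0$. Applying Theorem~\ref{thm:almost-flat-bundles-have-small-curvature} to $E_\epsilon$ equips it with a smooth structure, a Hermitian metric, and a compatible connection with curvature bounded by $C\epsilon$. Since this theorem only endows the given bundle with additional structure, the underlying topological isomorphism class of $E_\epsilon$, and hence the homotopy class of its classifying map, are preserved; so some Chern number of $E_\epsilon$ remains non-zero. Letting $\epsilon\to 0$ produces Gromov-almost-flat bundles with non-vanishing Chern numbers, showing that $M$ has infinite K-area.

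The principal obstacle is largely administrative: arranging for the universal function $c$ in the first direction to be chosen independently of the bundle, which requires the trick of inflating $c$ to make the bound vacuous for $\epsilon$ outside the range of validity of Theorem~\ref{thm:transport-along-contractible-loops}. Beyond this bookkeeping, once the two translation theorems of Section~\ref{sec:riemannian-manifolds} are in hand, the rest of the argument is formal.
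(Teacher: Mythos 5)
Your proof is correct and takes essentially the same route as the paper, which simply cites Theorems~\ref{thm:bundles-with-connection-are-epsilon-small} and~\ref{thm:almost-flat-bundles-have-small-curvature}. Your explicit construction of the uniform control function $c$ via Theorem~\ref{thm:transport-along-contractible-loops} and the inflation trick for large $\epsilon$ is the bookkeeping step the paper leaves implicit, and it is carried out correctly.
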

\begin{proof}
  This follows directly from theorem \ref{thm:bundles-with-connection-are-epsilon-small} and theorem
  \ref{thm:almost-flat-bundles-have-small-curvature}.
\end{proof}

\begin{korl}
  A closed oriented Riemannian manifold $M$ has infinite K-area if and only if $\phi_*[M]\in H_{2n}(B\pi_1(M);\Q)$ has infinite 
  K-area, where $\phi\colon M\to\pi_1(M)$ is the classifying map of the universal bundle.\qed
\end{korl}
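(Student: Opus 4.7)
The plan is to combine the immediately preceding theorem, which identifies infinite K-area of $M$ with infinite K-area of the fundamental class $[M] \in H_{2n}(M;\Q)$, with the functoriality statement in theorem \ref{thm:functoriality-infinite-karea} applied to the classifying map $\phi\colon M \to B\pi_1(M)$ of the universal cover.

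More concretely, first I would invoke the preceding theorem to replace the statement "$M$ has infinite K-area'' with the statement "the homology class $[M] \in H_{2n}(M;\Q)$ has infinite K-area in the sense of the definition on simplicial complexes''. (Strictly speaking this requires choosing a smooth triangulation of $M$, but $M$ is a closed manifold and so admits one; the previous theorem takes care of the equivalence between the curvature-theoretic and the combinatorial notions of $\epsilon$-flatness.) After that substitution, both sides of the desired equivalence are phrased purely in terms of the homological notion of infinite K-area for simplicial complexes, with one side being $[M] \in H_{2n}(M;\Q)$ and the other $\phi_*[M] \in H_{2n}(B\pi_1(M);\Q)$.

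Second, I would apply theorem \ref{thm:functoriality-infinite-karea} to $f = \phi$. By the very definition of the classifying map of the universal cover, $\phi_*\colon \pi_1(M) \to \pi_1(B\pi_1(M))$ is an isomorphism, so the second bullet of that theorem applies and gives that infinite K-area of $[M]$ implies infinite K-area of $\phi_*[M]$. The reverse implication is the first bullet of the same theorem, which requires no hypothesis on $\pi_1$. Combining the two implications finishes the proof.

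I do not expect any serious obstacle: the content is already packaged in the previous two results, and the corollary amounts to their composition. The only thing to be slightly careful about is the technical point that the definition of infinite K-area of a homology class requires the ambient complex to have finitely generated fundamental group, which is automatic on the $M$ side (since $M$ is compact) and on the $B\pi_1(M)$ side (by the same reason, together with the fact that one may choose a CW model of $B\pi_1(M)$ with finite 2-skeleton). These verifications are routine and do not affect the overall argument.
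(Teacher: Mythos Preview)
Your proposal is correct and matches the paper's approach: the corollary is stated with a \qed symbol and no further argument, precisely because it is the immediate combination of the preceding theorem (identifying infinite K-area of $M$ with that of $[M]$) with theorem \ref{thm:functoriality-infinite-karea} applied to the classifying map $\phi$, which induces an isomorphism on fundamental groups.
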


\begin{dfn}
  A closed oriented manifold $M$ is called \emph{essential} if $\phi_*[M]\neq 0$, where $\phi\colon M\to\pi_1M$ classifies the
  universal bundle. 
\end{dfn}

\begin{korl}
  A closed oriented manifold of infinite K-area is essential.\qed
\end{korl}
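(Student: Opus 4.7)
The plan is to combine the previous corollary with a direct-limit argument and the extension lemma \ref{lem:extension-of-bundles-pi1-surjective}. By the corollary immediately above, $M$ has infinite K-area if and only if $\phi_*[M]\in H_{2n}(B\pi_1(M);\Q)$ has infinite K-area, so it will suffice to show that the zero class in $H_{2n}(B\pi_1(M);\Q)$ cannot have infinite K-area.

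Assuming for contradiction that $\eta=0$ does have infinite K-area, I would unpack the definition to produce a finite connected $\pi_1$-surjective subcomplex $S\subset B\pi_1(M)$, a class $\eta_S\in H_{2n}(S;\Q)$ with $\iota_*\eta_S=0$ (where $\iota\colon S\hookrightarrow B\pi_1(M)$), a function $c$, and, for every $\epsilon>0$, a $(c,\epsilon)$-flat Hermitian bundle $E_\epsilon\to|S|$ whose classifying map $g_\epsilon\colon S\to BU$ satisfies $g_{\epsilon,*}\eta_S\neq 0$.

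Next, since the rational homology of a CW complex is the direct limit of the rational homology of its finite subcomplexes, from $\iota_*\eta_S=0$ I would extract a finite subcomplex $T\subset B\pi_1(M)$ with $S\subset T$ such that the image of $\eta_S$ in $H_{2n}(T;\Q)$ already vanishes. Because $S\subset T$ and $S$ is $\pi_1$-surjective in $B\pi_1(M)$, the composition $\pi_1(S)\to\pi_1(T)\to\pi_1(B\pi_1(M))$ is surjective, so $T$ is $\pi_1$-surjective as well. Lemma \ref{lem:extension-of-bundles-pi1-surjective}, applied to $S\subset T\subset B\pi_1(M)$, then yields (for all sufficiently small $\epsilon$) extensions $E_\epsilon'\to|T|$ of the bundles $E_\epsilon$, classified by maps $g_\epsilon'\colon T\to BU$ that restrict to $g_\epsilon$ up to homotopy. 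Naturality then forces
\[
g_{\epsilon,*}\eta_S=g_{\epsilon,*}'(j_*\eta_S)=g_{\epsilon,*}'(0)=0,
\]
where $j\colon S\hookrightarrow T$, contradicting $g_{\epsilon,*}\eta_S\neq 0$.

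The only place I expect any subtlety is the passage from $\iota_*\eta_S=0$ in $B\pi_1(M)$ to genuine vanishing in some finite $\pi_1$-surjective $T$; the direct-limit description of homology, combined with the one-line inheritance of $\pi_1$-surjectivity noted above, handles this. Every other step is a direct invocation of the extension lemma or of the two preceding corollaries.
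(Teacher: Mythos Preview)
Your argument is correct. Note, however, that the paper dispatches this corollary with a bare \qed: the reason is that the paragraph immediately following the definition of infinite K-area already establishes that the definition is independent of the choice of $S$ \emph{and} of the lift $\eta_S$, via exactly the extension-to-a-larger-finite-$\pi_1$-surjective-subcomplex argument you carry out. Granting that independence, for $\eta=0$ one may simply take $\eta_S=0$, and then the detection condition $f_*\eta_S\neq 0$ is impossible for any bundle. So your proof is not a different route but rather an explicit unpacking, in the special case $\eta=0$, of the independence argument the paper has already recorded.
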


Finally, we may reprove the theorem of Fukumoto on the invariance of infinite K-area under surgery. Let $M^n$ be a differentiable
manifold. If $S^p\times D^q\subset M$ is an embedding, we consider the manifold
\[
  M^\#=(M-S^p\times\mathrm{int}(D^q))\cup_{S^p\times S^{q-1}}D^{p+1}\times S^{q-1}.
\]
We say that $M^\#$ is obtained from $M$ by \emph{$p$-surgery}.

\begin{thm}[{\cite{fukumoto-invariance-karea-surgery}}]
  Let $M^{2n}$ be a closed oriented manifold with infinite K-area, and let $M^\#$ be obtained from $M$ by surgery of index $p\neq
  1$. Then also $M^\#$ has infinite K-area.
\end{thm}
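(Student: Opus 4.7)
The plan is to reduce to the homological statement from the theorem immediately preceding, which says that a closed oriented $2n$-manifold has infinite K-area exactly when its rational fundamental class does. Thus it suffices to deduce the infinite K-area of $[M^\#]\in H_{2n}(M^\#;\Q)$ from that of $[M]$. I would split according to the two sub-cases $p\geq 2$ and $p=0$, which together exhaust $p\neq 1$.

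For $p\geq 2$, I would consider the trace $W$ of the surgery: a compact oriented $(2n+1)$-dimensional cobordism from $M$ to $M^\#$, triangulated so that both $M$ and $M^\#$ appear as subcomplexes. Since $\partial W$ bounds in $W$, the inclusions $i\colon M\hookrightarrow W$ and $j\colon M^\#\hookrightarrow W$ satisfy $i_*[M]=j_*[M^\#]$ in $H_{2n}(W;\Q)$. The cobordism $W$ is obtained from $M\times[0,1]$ by attaching a single $(p+1)$-handle; since $p+1\geq 3$, this attachment is homotopically that of a cell of dimension at least three, so $i$ induces an isomorphism on fundamental groups. The second item of Theorem \ref{thm:functoriality-infinite-karea} then gives that $i_*[M]$ has infinite K-area; hence so does $j_*[M^\#]$, and the first item applied to $j$ concludes that $[M^\#]$ has infinite K-area.

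For $p=0$ the preceding argument breaks down because $i\colon M\hookrightarrow W$ now amounts to attaching a $1$-handle, which typically enlarges $\pi_1$. Instead, assuming $M$ connected, I would exploit that a $0$-surgery realises $M^\#$ as the connected sum $M\#(S^1\times S^{2n-1})$. Pinching the separating $(2n-1)$-sphere to a point yields a map $M^\#\to M\vee(S^1\times S^{2n-1})$ sending $[M^\#]$ to $[M]+[S^1\times S^{2n-1}]$; composing with the retraction of the wedge onto $M$ annihilates the wedge summand and produces a map $f\colon M^\#\to M$ with $f_*[M^\#]=[M]$. The first item of Theorem \ref{thm:functoriality-infinite-karea} then immediately gives that $[M^\#]$ has infinite K-area.

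The principal obstacle is precisely the asymmetry of Theorem \ref{thm:functoriality-infinite-karea}: only the pullback direction is unconditional, while pushing an infinite-K-area class forward requires a $\pi_1$-isomorphism. This forces the separate treatment of $p=0$, where the cobordism cannot transfer infinite K-area from $M$ to $M^\#$ directly, and a workaround via the connected-sum pinching is needed. Apart from this, only routine simplicial-approximation considerations are involved in realising the inclusions and the pinching-retraction as maps of simplicial complexes in the sense required by the paper.
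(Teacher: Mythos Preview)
Your argument is correct, but the paper takes a different, more uniform route that avoids the case split. The paper works entirely through the classifying map $\phi\colon M\to B\pi_1(M)$: since $B\pi_1(M)$ is aspherical, $\pi_p(B\pi_1(M))=0$ for every $p\neq 1$, so the restriction of $\phi$ to the attaching region $S^p\times D^q$ is null-homotopic and $\phi$ extends over the trace $B$ to a map $F\colon B\to B\pi_1(M)$. Then $F_*[M^\#]=F_*[M]=\phi_*[M]$, which has infinite K-area by the second item of Theorem~\ref{thm:functoriality-infinite-karea}, and pulling back along $F|_{M^\#}$ via the first item finishes the proof. This handles $p=0$ and $p\geq 2$ simultaneously and directly exhibits the corollary that infinite K-area depends only on $\phi_*[M]$, which is the theme of the section.

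Your approach, by contrast, stays inside compact spaces and never invokes $B\pi_1(M)$. For $p\geq 2$ you exploit that the trace itself already receives $M$ via a $\pi_1$-isomorphism, so the detour through the classifying space is unnecessary; for $p=0$ you replace the cobordism argument by the degree-one collapse $M\#(S^1\times S^{2n-1})\to M$, which is a pleasant geometric shortcut. The cost is the case distinction and the (harmless) implicit assumption that $M$ is connected in the $p=0$ case; the gain is that you only ever apply Theorem~\ref{thm:functoriality-infinite-karea} to maps between finite complexes.
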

\begin{proof}
  Consider the \emph{trace}
  \[
    B=M\times I\cup_{S^p\times D^q\times\{1\}}D^{p+1}\times D^q.
  \]
  This is a bordism between $M$ and $M^\#$, so $[M]$ and $[M^\#]$ define the same class in $B$. Now let $f\colon M\to B\pi_1(M)$
  be the classifying map of the universal cover of $M$. By theorem \ref{thm:functoriality-infinite-karea}, $f_*[M]$ has infinite
  K-area. However, since $p\neq 1$, we have that $f|_{S^p\times D^q}$ is null-homotopic because $\pi_p(B\pi_1(M))=0$. Thus, $f$
  can be extended to $B$, and $f_*[M^\#]=f_*[M]$ has infinite K-area. Thus, by theorem \ref{thm:functoriality-infinite-karea},
  also $[M^\#]$ has infinite K-area.
\end{proof}

Listing \cite{listing-homology-karea} gave the following definition of infinite K-area for homology classes of manifolds $M$. A 
class $\eta\in H_{2*}(M;G)$ has infinite K-area if for every $\epsilon>0$ there exists a smooth Hermitian vector bundle $E\to M$
with compatible connection $\nabla$ such that $\|\mathcal R^\nabla\|\leq\epsilon$ and $f_*\eta\neq 0$ if $f\colon M\to BU$
classifies the bundle $E$. It is clear that our definition generalizes the definition of Listing. In the case $G=\Q$ the condition
$f_*\eta\neq 0$ simply means that some polynomial in the Chern classes of $E$ detects $\eta$.

One could obviously change this definition by demanding that a particular polynomial in the Chern classes, for instance the Chern
character, should detect $\eta$, which corresponds to $f_*\eta$ lying in a particular vector subspace of $H_{2*}(BU;\Q)$. All
statements in this section hold equally well for this kind of definition.

On the other hand, one could also consider K-homology classes $\eta\in K_0(M)$, as was done by Hanke
\cite{hanke-positive-scalar-curvature}. Here the condition on the bundles would simply be that their class pairs non-trivially
with $\eta$. Furthermore, in this case one could consider arbitrary Hilbert $A$-module bundles $E\to M$. In this case, their index
$\langle [E],\eta\rangle$ would be an element of $K_0(A)$, and one could still demand it to be nonzero. All theorems in this
section hold equally well for this definition of infinite K-area.

Finally, one could consider classes of \emph{finite K-area} as in \cite{listing-homology-karea}. Here, the K-area of a class
$\eta$ would be the largest number $a\in[0,\infty]$ such that there is a function $c$ as above with the property that for every 
$\epsilon>a^{-1}$ there is a $(c,\epsilon)$-flat bundle detecting $\eta$. Of course, proposition
\ref{prop:pullback-of-almost-flat-bundles} and theorem \ref{thm:functoriality-almost-flat-bundles} imply that there are
appropriate generalizations of theorem \ref{thm:functoriality-infinite-karea}. However, since this notion of K-area strongly
depends on the choice of triangulation, it is not clear how this might be of any use.

\section{Almost representations and quasi-representations}

\label{sec:almost-representations}

Let $X$ be a simplicial complex with finitely presented fundamental group. In this section, we will exhibit the relation between 
so-called almost representations of $\pi_1X$ and almost flat bundles over $X$. Specifically, we will show that an
$\epsilon$-almost representation of $\pi_1X$ gives a $C\epsilon$-flat bundle over $X$ and vice versa. While similar statements
have already been shown in \cite{carrion-dadarlat-almost-flat-k-theory}, and this relation has already been suggested in
\cite{connes-gromov-moscovici-conjecture-de-novikov}, it will follow easily from the ideas developped in this paper.

\subsection{Almost representations of finitely presented groups}

Recall that a group $\Pi$ is \emph{generated} by a set $L\subset\Pi$ if every element of $\Pi$ can be written as a product of
elements of $L$ and their inverses. Here we view the identity element of $\Pi$ as the empty product. 
Given such a \emph{generating set} $L\subset\Pi$, we may form the free group $\Fr(L)$
generated by the elements of $L$. Then there is a natural surjective group homomorphism $\pi\colon\Fr(L)\to\Pi$ induced by the
inclusion map $L\subset\Pi$.

Now a set of \emph{relations} is a subset $R\subset\Fr(L)$ such that the kernel of $\pi$ is the smallest normal subgroup of
$\Fr(L)$ which contains $R$. Thus, elements of $R$ are words in $L\cup L^{-1}$. A \emph{presentation} of a group $\Pi$ is a choice 
of such sets $L$ and $R$. In this situation, we write $\Pi=\langle L\mid R\rangle$. This means that every element of $\Pi$ may be 
written as a product of elements of $L\cup L^{-1}$. Such a presentation is called \emph{finite} if both $L$ and $R$ are finite.

\begin{ex}\label{ex:finite-presentation-of-finite-simplicial-complex}
  Consider a simplicial complex $X$, and let $T\subset X$ be a maximal tree in $X$. Then there is the
  following presentation of $\pi_1(X)$:
  
  For every edge $\sigma=\{v_0,v_1\}\in X_1-T$, we choose a loop $\Gamma_\sigma=\Gamma_1*(v_0,v_1)*\Gamma_2$, where $\Gamma_1$ and 
  $\Gamma_2$ are completely contained in $T$. Now the set of generators $L$ consists of the homotopy classes of the
  $\Gamma_\sigma$ for all $\sigma\in X_1-T$. The set of relations is indexed by the two simplices $\rho\in X_2$, and implements
  the fact that a curve along the boundary of $\rho$ is null-homotopic. For instance, if $\rho=\{v_0,v_1,v_2\}$ and neither of the
  edges of $\rho$ is contained in $T$, then the relation associated to $\rho$ is
  $[\Gamma_{\{v_0,v_1\}}][\Gamma_{\{v_1,v_2\}}][\Gamma_{\{v_2,v_0\}}]$. Note that this is a finite presentation if $X$ is 
  finite.
\end{ex}

\begin{dfn}
  A (unitary) \emph{$\epsilon$-almost representation} \cite{manuilov-mishchenko-almost-asymptotic-fredholm-representations} of 
  $\Pi$ on the Hilbert $A$-module $V$ with respect to the presentation 
  $\Pi=\langle L\mid R\rangle$ is a group homomorphism $\phi\colon\Fr(L)\to U(\mathcal L_A(V))$ with the 
  property that $\|\phi(r)-\id\|<\epsilon$ for every $r\in R$. We denote the set of such $\epsilon$-almost representations by
  $R_\epsilon(L\mid R)$.

  Two almost representations $\phi,\psi\colon\Fr(L)\to U(\mathcal L_A(V))$ are
  \emph{$\delta$-close} if $\|\phi(g)-\psi(g)\|\leq\delta$ for all $g\in L\subset\Fr(L)$. 
\end{dfn}

The following proposition lists a few elementary properties of almost representations.

\begin{prop}\label{prop:properties-of-almost-representations}
  Let $\Pi=\langle L\mid R\rangle$ and $\Pi'=\langle L'\mid R'\rangle$ be two finite presentations of groups, and let
  $f\colon\Pi'\to\Pi$ be a group homomorphism. Denote the canonical projections by $\pi\colon\Fr(L)\to\Pi$ and 
  $\pi'\colon\Fr(L')\to\Pi'$.
  \begin{enumerate}
    \item Let $s\colon\Fr(L')\to\Fr(L)$ be a homeomorphism satisfying $\pi\circ s=f\circ\pi'$. Then there 
      is a constant $C_1>0$, depending on the presentations and the choice of section, such that 
      $\phi\circ s\in R_{C_1\epsilon}(L'\mid R')$ whenever $\phi\in R_\epsilon(L\mid R)$.
    \item In the same situation, there is a constant $C_2>0$, such that $\phi\circ s$ and $\psi\circ s$ are
      $C_2\delta$-close whenever $\phi,\psi\in R_\epsilon(L\mid R)$ are $\delta$-close.
    \item If $s_1,s_2\colon L'\to\Fr(L)$ satisfy $\pi\circ s_1=f\circ\pi'=\pi\circ s_2$, then there is a constant $C_3>0$, such 
      that the almost representations $\phi\circ s_1$ and $\phi\circ s_2$ are $C_3\epsilon$-close whenever 
      $\phi\in R_\epsilon(L\mid R)$.
    \item Suppose that $f$ is an isomorphism. If $s\colon L'\to\Fr(L)$ and $s'\colon L\to\Fr(L')$ are such that $\pi\circ
      s=f\circ\pi'$ and $\pi'\circ s=f\circ\pi$, then there is a constant $C_4>0$, such that $\phi$ is $C_4\epsilon$-close to 
      $\phi\circ s\circ s'$ whenever $\phi\in R_\epsilon(L\mid R)$.
  \end{enumerate}
\end{prop}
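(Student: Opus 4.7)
All four parts rest on the same telescoping principle for products of near-unitaries, which I will invoke in the packaged form of Lemma \ref{lem:very-useful-lemma}. Finiteness of the presentations converts local estimates into uniform constants, and the key observation used throughout is that $\phi$ takes values in \emph{unitaries}, so $\|\phi(g)^{-1}-\psi(g)^{-1}\|=\|\phi(g)-\psi(g)\|$ and conjugation by $\phi(g)$ preserves operator norm.

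For (a), the word $s(r')$ lies in $\ker\pi$, since $\pi(s(r'))=f(\pi'(r'))=e$, and hence in the normal closure of $R$ in $\Fr(L)$. Because $R'$ is finite I fix once and for all an expression $s(r')=\prod_{i=1}^{k(r')}g_ir_i^{\eta_i}g_i^{-1}$ with $r_i\in R$ and $g_i\in\Fr(L)$; applying $\phi$ and using unitarity of $\phi(g_i)$, every factor $\phi(g_i)\phi(r_i)^{\eta_i}\phi(g_i)^{-1}$ is within $\epsilon$ of $\id$, and Lemma \ref{lem:very-useful-lemma} gives $\|\phi(s(r'))-\id\|\leq C_1\epsilon$ with $C_1$ depending only on $\max_{r'}k(r')$. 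For (b), I write each $s(g')$ as a word of length $k(g')$ in $L^{\pm1}$ and telescope: swapping one factor $\phi(h)^{\pm1}$ for $\psi(h)^{\pm1}$ at a time introduces at most $\delta$ in operator norm, so $C_2:=\max_{g'\in L'}k(g')$ works.

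Parts (c) and (d) both reduce to (a). For (c), the element $s_1(g')s_2(g')^{-1}$ lies in $\ker\pi$ for every $g'\in L'$, so (a) gives $\|\phi(s_1(g'))\phi(s_2(g'))^{-1}-\id\|\leq C_3\epsilon$; unitarity of $\phi(s_2(g'))$ converts this into $\|\phi(s_1(g'))-\phi(s_2(g'))\|\leq C_3\epsilon$. Part (d) is then the special case of (c) with $s_1=\id|_L$ and $s_2=s\circ s'\colon L\to\Fr(L)$: the assumption that $f$ is an isomorphism, together with the compatibility conditions on $s$ and $s'$, forces $\pi(s(s'(g)))=\pi(g)$, i.e.\ $\pi\circ s_1=\pi\circ s_2$ on $L$, which is exactly the hypothesis of (c).

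I do not expect any serious obstacle beyond careful book-keeping of the constants. What must be verified is that $C_1,\ldots,C_4$ depend only on the finite data (the presentations, the chosen sections, the normal-closure expressions, and the word-length data) and not on $\phi$, $\psi$, $V$ or $A$; this is automatic since only finitely many words are ever chosen, and the per-word telescoping bounds are universal. The only minor technical point is the hypothesis $\epsilon\leq 1$ in Lemma \ref{lem:very-useful-lemma}, which is harmless because each $C_i\epsilon$-bound is vacuous once $C_i\epsilon\geq 2$ (the diameter of $U(\mathcal L_A(V))$), so one may freely shrink the initial $\epsilon$.
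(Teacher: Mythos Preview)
Your proposal is correct and follows essentially the same route as the paper: part (a) via the normal-closure expression and Lemma~\ref{lem:very-useful-lemma}, part (b) via a word-length telescoping (the paper phrases this as an induction reducing to the $n=2$ case and then invokes Lemma~\ref{lem:very-useful-lemma}, but the content is identical), and parts (c) and (d) by observing that the relevant element lies in $\ker\pi$ and reusing the argument of (a). Your framing of (d) as a special case of (c) with $s_1=\id|_L$, $s_2=s\circ s'$ is exactly how the paper proceeds.
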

\begin{proof}
  \begin{enumerate}
    \item For the first statement, note that $\pi\circ s=f\circ\pi'$ implies that $s(R')\subset\ker\pi$. Since $R'$ is finite, 
      there is a number $N\in\N$ such that every element of $s(R')$ can be written as a product of at most $N$ conjugates of 
      elements of $R\cup R^{-1}$. Thus, if $r\in R'$, there are elements $r_1,\ldots,r_k\in R\cup R^{-1}$ and 
      $w_1,\ldots,w_k\in\Fr(L)$ such that $s(r)=(w_1^{-1}r_1w_1)\cdots(w_k^{-1}r_kw_k)$, and therefore
      $\|\phi\circ s(r)-\id\|=\|(\phi(w_1)^{-1}\phi(r_1)\phi(w_1))\cdots(\phi(w_k)^{-1}\phi(r_k)\phi(w_k))-\id\|\leq C_1\epsilon$
      by lemma \ref{lem:very-useful-lemma}, where $C_1=C_1(N)$ depends only on the maximum number of factors needed.
    \item Consider $g\in L'$, and let $s(g)=g_1\cdots g_n$ where each $g_i\in L\cup L^{-1}$. Write $\phi_i=\phi(g_i)$,
      $\psi_i=\psi(g_i)$. Then, by assumption, $\|\phi_i-\psi_i\|\leq\delta$, $\phi\circ s(g)=\phi_1\cdots\phi_n$, and $\psi\circ
      s(g)=\psi_1\cdots\psi_n$. We have to show that $\|\phi_1\cdots\phi_n-\psi_1\cdots\psi_n\|\leq C\delta.$
      By induction, we only have to consider the case where $n=2$, where the claim follows from lemma \ref{lem:very-useful-lemma}
      because $\|\phi_1\phi_2-\psi_1\psi_2\|=\|(\psi_1^{-1}\phi_1)(\phi_2\psi_2^{-1})-\id\|$.
    \item Note that $\phi(s_1(g))^{-1}\phi(s_2(g))\in\ker\pi$ for every $g\in L'$, and proceed as above to show that
      $\|\phi(s_1(g))-\phi(s_2(g))\|=\|\phi(s_1(g))^{-1}\phi(s_2(g))-\id\|\leq C_3\epsilon$.
    \item Since $(s\circ s'(g))^{-1}g\in\ker\pi$, the same argument as above shows that $\|\phi(s\circ
      s'(g))-\phi(g)\|=\|\phi((s\circ s'(g))^{-1}g)-\id\|\leq C_4\epsilon$ for every $g\in L$.\qedhere
  \end{enumerate}
\end{proof}

\begin{dfn}[{\cite{manuilov-mishchenko-almost-asymptotic-fredholm-representations}}]
  Let $\Pi=\langle L\mid R\rangle$ be a finitely presented group, and $A$ a C*-algebra. An \emph{$A$-asymptotic representation} of 
  $\Pi$ with respect to this presentation is a series $\phi=(\phi_n\colon\Fr(L)\to U(\mathcal L_A(V_n)))_{n\in\N}$, such that:
  \begin{itemize}
    \item every $V_n$ is a projective sub-module of some $A^k$,
    \item for every $\epsilon>0$, there is a number $N\in\N$, such that $\phi_n$ is an $\epsilon$-almost representation whenever
      $n\geq N$,
    \item for every $\delta>0$, there is a number $N\in\N$, such that $\phi_n$ and $\phi_m$ are $\delta$-close whenever $n,m\geq
      N$, where $\phi_n$ and $\phi_m$ are considered to have values in some large enough $U(\mathcal L_A(A^k))$.
  \end{itemize}

  Two asymptotic representations $\phi=(\phi_n)$ and $\psi=(\psi_n)$ are \emph{equivalent} if for every $\delta>0$, there is a 
  number $N\in\N$, such that $\phi_n$ and $\psi_m$ are $\delta$-close whenever $n,m\geq N$.

  We denote by $R_{\mathrm{as}}(L\mid R;A)$ the set of equivalence classes of $A$-asymptotic representations.
\end{dfn}

Now proposition \ref{prop:properties-of-almost-representations} immediately implies the following:

\begin{prop}\label{prop:functoriality-of-asymptotic-representations}
  \begin{enumerate}
    \item Suppose $\Pi$ has two finite presentations $\Pi=\langle L\mid R\rangle$ and $\Pi=\langle L'\mid R'\rangle$. Then the
      sets $R_{\mathrm{as}}(L\mid R;A)$ and $R_{\mathrm{as}}(L'\mid R';A)$ are in canonical 1-to-1-correspondence. We will simply
      write $R_{\mathrm{as}}(\Pi;A)$ for any choice of finite presentation.
    \item Every group homomorphism $f\colon\Pi\to\Pi'$ induces a map $R_{\mathrm{as}}(\Pi';A)\to R_{\mathrm{as}}(\Pi;A)$.\qed
  \end{enumerate}
\end{prop}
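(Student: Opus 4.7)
The plan is to assemble the proposition directly from the four parts of Proposition \ref{prop:properties-of-almost-representations}; almost all the technical content is already there, and what remains is careful book-keeping of the constants and equivalence relations.

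First I would prove part (b), which is the substantive statement, since part (a) will fall out as the special case $\Pi = \Pi'$, $f = \id$. Given a group homomorphism $f\colon\Pi\to\Pi'$, choose for each generator $g\in L'$ a word $s(g)\in\Fr(L)$ with $\pi(s(g)) = f(\pi'(g))$, and extend freely to a homomorphism $s\colon\Fr(L')\to\Fr(L)$ satisfying $\pi\circ s = f\circ\pi'$. Given a representative $\phi=(\phi_n\colon\Fr(L)\to U(\mathcal L_A(V_n)))$ of a class in $R_{\mathrm{as}}(L\mid R;A)$, I define $f^*\phi = (\phi_n\circ s)$. By part (a) of Proposition \ref{prop:properties-of-almost-representations}, if $\phi_n\in R_\epsilon(L\mid R)$, then $\phi_n\circ s\in R_{C_1\epsilon}(L'\mid R')$, so the first defining property of an asymptotic representation is preserved. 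By part (b), if $\phi_n$ and $\phi_m$ are $\delta$-close, then $\phi_n\circ s$ and $\phi_m\circ s$ are $C_2\delta$-close, so the Cauchy-style condition is preserved as well. Hence $f^*\phi$ is a genuine $A$-asymptotic representation of $\Pi'$.

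Next I would check well-definedness on equivalence classes. If $s_1, s_2\colon\Fr(L')\to\Fr(L)$ are two such sections, then part (c) of Proposition \ref{prop:properties-of-almost-representations} shows that $\phi_n\circ s_1$ and $\phi_n\circ s_2$ are $C_3\epsilon_n$-close whenever $\phi_n\in R_{\epsilon_n}(L\mid R)$; since $\epsilon_n\to 0$ for an asymptotic representation, the two pullbacks define the same equivalence class in $R_{\mathrm{as}}(\Pi';A)$. An analogous application of (b) shows that equivalent representatives $\phi\sim\psi$ yield equivalent pullbacks: pointwise $\delta_n$-closeness of $\phi_n$ and $\psi_n$ gives pointwise $C_2\delta_n$-closeness of $\phi_n\circ s$ and $\psi_n\circ s$. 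Thus $f^*$ descends to a well-defined map $R_{\mathrm{as}}(\Pi';A)\to R_{\mathrm{as}}(\Pi;A)$. Functoriality $(g\circ f)^* = f^*\circ g^*$ at the level of equivalence classes follows by choosing a section for $g\circ f$ as the composition of individual sections and applying part (c) to compare with an arbitrary section.

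For part (a), apply the above with $\Pi = \Pi'$ and $f = \id$ in both directions to get maps
\[
  \Phi\colon R_{\mathrm{as}}(L\mid R;A)\to R_{\mathrm{as}}(L'\mid R';A),\qquad
  \Psi\colon R_{\mathrm{as}}(L'\mid R';A)\to R_{\mathrm{as}}(L\mid R;A),
\]
induced by sections $s\colon\Fr(L')\to\Fr(L)$ and $s'\colon\Fr(L)\to\Fr(L')$. Part (d) of Proposition \ref{prop:properties-of-almost-representations} says that for $\phi\in R_\epsilon(L\mid R)$, the representation $\phi\circ s\circ s'$ is $C_4\epsilon$-close to $\phi$; letting $\epsilon_n\to 0$ along an asymptotic representation, this gives $\Psi\circ\Phi = \id$, and symmetrically $\Phi\circ\Psi = \id$, so both maps are canonical bijections.

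I do not expect a genuine obstacle here: the whole argument is essentially formal once Proposition \ref{prop:properties-of-almost-representations} is in hand. The only point that requires a tiny bit of care is confirming that the ``finite number of factors'' constants $C_i$ in that proposition depend only on the presentations and the chosen sections (not on $\phi$ or on $\epsilon_n$), so that applying them along a sequence $\epsilon_n\to 0$ still yields constants times $\epsilon_n$ converging to zero; this is exactly what part (a) of Proposition \ref{prop:properties-of-almost-representations} guarantees via the uniform bound $N$ on the relation lengths.
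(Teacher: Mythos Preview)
Your approach is exactly what the paper intends: it gives no proof beyond ``Now proposition \ref{prop:properties-of-almost-representations} immediately implies the following'', and your write-up spells out precisely how parts (a)--(d) of that proposition are used. There is, however, a directional slip in your treatment of part (b): with $f\colon\Pi\to\Pi'$ and the desired induced map $R_{\mathrm{as}}(\Pi';A)\to R_{\mathrm{as}}(\Pi;A)$, you must start from $\phi=(\phi_n\colon\Fr(L')\to U(\mathcal L_A(V_n)))$ and choose a lift $s\colon\Fr(L)\to\Fr(L')$ satisfying $\pi'\circ s=f\circ\pi$, not the other way round; as written, the equation $\pi(s(g))=f(\pi'(g))$ does not typecheck since $\pi'(g)\in\Pi'$ while $f$ has domain $\Pi$. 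Once the primes are swapped consistently, the argument goes through verbatim.
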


\subsection{Almost representations and almost flat bundles}

Let $X$ be a simplicial complex with finitely presented fundamental group $\Pi=\pi_1(|X|,x_0)=\langle L\mid R\rangle$. Choose 
representing simplicial loops $\Gamma_g$ for every element $g=[\Gamma_g]\in L$. Then every $r\in R$ is
a word in $L\cup L^{-1}$, so these choices associate to $r$ a contractible simplicial loop $P_r$.

\begin{prop}
  Suppose that $E\to|X|$ is an $\epsilon$-flat bundle. Then transport along the curves $\Gamma_g$ gives a $C\epsilon$-almost 
  representation of $\pi_1(|X|,x_0)$, where $C$ is a constant depending only on $X$, the presentation, and the choices of the 
  $\Gamma_g$'s. 
\end{prop}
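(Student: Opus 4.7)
The plan is to define the almost representation $\phi$ on the free group $\Fr(L)$ by using parallel transport along the chosen loops. Fix a trivialization $\Phi_{x_0}\colon V \to E_{x_0}$ of the fibre over the basepoint. For each generator $g \in L$, set
\[
  \phi(g) = \Phi_{x_0}^{-1} \circ T_{\Gamma_g} \circ \Phi_{x_0} \in U(\mathcal L_A(V)),
\]
which is unitary since $T_{\Gamma_g}$ is an isomorphism of Hilbert $A$-modules. Because $\Fr(L)$ is free on $L$, this assignment extends uniquely to a group homomorphism $\phi\colon\Fr(L) \to U(\mathcal L_A(V))$.

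Next I would verify the defining property of an almost representation, namely that $\|\phi(r)-\id\|$ is small for every $r \in R$. Given $r = g_{i_1}^{\varepsilon_1}\cdots g_{i_k}^{\varepsilon_k}$ with $\varepsilon_j \in \{\pm 1\}$, the value $\phi(r)$ equals the corresponding product of the unitaries $\phi(g_{i_j})^{\varepsilon_j}$. Since transport is a functor from the simplicial path category $\mathcal P_X$ to Hilbert $A$-module isomorphisms, this product is conjugate by $\Phi_{x_0}$ to the transport map along the concatenation
\[
  P_r = \Gamma_{g_{i_1}}^{\varepsilon_1} * \cdots * \Gamma_{g_{i_k}}^{\varepsilon_k}.
\]
(For inverse generators one uses that $T_{\bar\Gamma} = T_\Gamma^{-1}$, which is immediate from the definition of transport.) Thus
\[
  \|\phi(r)-\id\| = \|T_{P_r} - \id\|.
\]

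Now $r$ lies in the kernel of $\pi\colon\Fr(L) \to \pi_1(|X|,x_0)$ by the defining property of a presentation, so $P_r$ is a nullhomotopic simplicial loop at $x_0$. Applying Theorem \ref{thm:transport-along-contractible-loops} to $P_r$ yields
\[
  \|T_{P_r} - \id\| \leq c(\hc(P_r))\,\epsilon,
\]
provided $\epsilon \leq \delta(\hc(P_r))$. Since $R$ is finite (one needs the finiteness here — if $R$ is infinite this argument fails), we may set
\[
  C = \max_{r \in R} c(\hc(P_r)), \qquad \delta_0 = \min_{r \in R} \delta(\hc(P_r)),
\]
and conclude that $\phi$ is a $C\epsilon$-almost representation whenever $\epsilon \leq \delta_0$.

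The only substantive step is the functoriality of transport under concatenation and inversion of simplicial paths, which is a direct consequence of the definition $T_\Gamma = T_{(v_{k-1},v_k)} \cdots T_{(v_0,v_1)}$; everything else is bookkeeping. The main conceptual content is hidden in Theorem \ref{thm:transport-along-contractible-loops}, which has already been proven, so this proposition is genuinely short.
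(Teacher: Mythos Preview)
Your proof is correct and follows exactly the same approach as the paper's own proof, which consists of two sentences: apply Theorem \ref{thm:transport-along-contractible-loops} to each $P_r$, and use finiteness of $R$ to get a uniform constant. You have simply spelled out the details (the definition of $\phi$, the identification $\|\phi(r)-\id\|=\|T_{P_r}-\id\|$, and the explicit $\delta_0$) that the paper leaves implicit.
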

\begin{proof}
  Apply theorem \ref{thm:transport-along-contractible-loops} to the curves $P_r$. Since there are only finitely many of them, the
  constant from the theorem may be chosen for all $P_r$ simultaneously.
\end{proof}
  
We are going to prove that the reverse also holds true, i.~e. an $\epsilon$-almost representation of $\pi_1(|X|,x_0)$ with 
respect to the given presentation induces a $C\epsilon$-small bundle $E\to|X|$ (for another constant $C$) such that 
transport along the curves $\Gamma_g$ induces an $\epsilon$-almost representation which is close to the one we started with.

\begin{thm}\label{thm:extension-of-almost-representations}
  Let $X$ be a finite simplicial complex, and choose a finite presentation $\pi_1(|X|,x_0)=\langle L\mid R\rangle$ of the 
  fundamental group of $X$. In addition, choose representing curves $\Gamma_g$ for the generators $g=[\Gamma_g]\in L$. Then there 
  are constants $\delta,C>0$, depending on $X$, the presentation of the fundamental group, and the choices of the representing 
  curves, such that the following holds:

  Suppose $\phi\colon\Fr(G)\to U(\mathcal L_A(V))$ is an $\epsilon$-almost representation of $\pi_1(|X|,x_0)$ where
  $\epsilon\leq\delta$. Then there exists a $C\epsilon$-flat bundle $E\to|X|$ with the property that transport along the 
  curves $\Gamma_g$ gives an almost representation which is $C\epsilon$-close to $\phi$. 
\end{thm}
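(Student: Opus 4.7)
The plan is to reduce to the simplicial presentation of example \ref{ex:finite-presentation-of-finite-simplicial-complex} and then construct the bundle skeleton by skeleton, invoking theorems \ref{thm:extensions-of-almost-flat-bundles-2} and \ref{thm:extension-of-almost-flat-bundles-1} for the extensions.

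First I would fix a maximal tree $T \subset X$ and pass to the canonical presentation $\pi_1(|X|,x_0) = \langle L' \mid R'\rangle$ of example \ref{ex:finite-presentation-of-finite-simplicial-complex}, whose generators $g_\sigma$ are indexed by edges $\sigma \in X_1 \setminus T$ (with distinguished representative loops $\Gamma'_\sigma$ passing once through $\sigma$ and otherwise through $T$) and whose relations $r_\rho$ are indexed by 2-simplices $\rho \in X_2$. Applying proposition \ref{prop:properties-of-almost-representations}(a) to a section compatible with the two presentations converts $\phi$ into a $C_1\epsilon$-almost representation $\phi'$ on $\Fr(L')$.

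Next I would build the bundle on $|X^{(1)}|$ by declaring it trivial over $|T|$ (so the transport along every tree edge is the identity and each vertex fibre is canonically identified with $V$) and, for each edge $\sigma = \{v_0,v_1\} \in X_1\setminus T$, choosing the trivialization $\Phi_\sigma$ so that the edge transport $T_{(v_0,v_1)}$ equals $\phi'(g_\sigma)$; over a 1-dimensional base any family of trivializations is automatically $0$-flat, so this costs nothing. By multiplicativity of simplicial transport and triviality inside $T$, the transport around the boundary of any $\rho \in X_2$ equals $\phi'(r_\rho)$ and thus satisfies $\|T_{\partial\rho} - \id\| \leq C_1\epsilon$. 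Theorem \ref{thm:extensions-of-almost-flat-bundles-2} then extends the bundle to a $C_2\epsilon$-flat bundle over $|X^{(2)}|$, and iterating theorem \ref{thm:extension-of-almost-flat-bundles-1} extends this to a $C_3\epsilon$-flat bundle on all of $|X|$; the induction terminates because $X$ is finite-dimensional, and $\delta$ is chosen small enough that each intermediate flatness constant satisfies the hypothesis of the next extension theorem.

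To verify the representation-theoretic conclusion, observe that for each $g \in L$ the original loop $\Gamma_g$ is a simplicial concatenation of edges, so by multiplicativity the transport along $\Gamma_g$ equals $\phi'(w_g)$, where $w_g \in \Fr(L')$ is the word recording the non-tree edges of $\Gamma_g$ with signs. Since the image of $w_g$ in $\pi_1(|X|,x_0)$ equals $[\Gamma_g] = \pi(g)$, parts (c) and (d) of proposition \ref{prop:properties-of-almost-representations} (applied to $f = \id$) show that $\phi'(w_g)$ is $C_4\epsilon$-close to $\phi(g)$; setting $C = \max\{C_1,C_2,C_3,C_4\}$ and shrinking $\delta$ if necessary yields the theorem. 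The main obstacle is the bookkeeping: tracking how the constants degrade under the change of presentation, the edge-by-edge construction, and the two extension theorems, and ensuring that the sections $L \to \Fr(L')$ and $L' \to \Fr(L)$ used at the two ends of the argument are related by elements of $\ker\pi$ so that the quantitative parts (c)--(d) of proposition \ref{prop:properties-of-almost-representations} actually deliver the required comparison.
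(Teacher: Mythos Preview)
Your proposal is correct and follows essentially the same approach as the paper: reduce to the standard simplicial presentation of example \ref{ex:finite-presentation-of-finite-simplicial-complex} via proposition \ref{prop:properties-of-almost-representations}, build the bundle on the 1-skeleton by hand (trivial over the tree, with edge-transports prescribed by $\phi'$), extend using theorems \ref{thm:extensions-of-almost-flat-bundles-2} and \ref{thm:extension-of-almost-flat-bundles-1}, and finally compare the resulting transport with $\phi$ via part (d) of proposition \ref{prop:properties-of-almost-representations}. The only cosmetic difference is the order of presentation: the paper first treats the standard-presentation case and then reduces the general case to it, whereas you perform the reduction up front.
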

\begin{proof}
  We will first restrict to the case where the presentation $\pi_1(|X|,x_0)=\langle L_0\mid R_0\rangle$ is 
  the one described in example \ref{ex:finite-presentation-of-finite-simplicial-complex}, and we assume that the representing 
  curves are precisely the loops
  $\Gamma_e$ described there. Now we take $E|_{X^{(1)}}$ to be the trivial bundle $X^{(1)}\times V\to X^{(1)}$, and let the
  trivializations $\Phi_\rho\colon|\rho|\times V\to E|_{|\rho|}$ be the identities $(x,v)\mapsto(x,v)$ if $\rho\in T$ is contained
  in the maximal tree. Now we may trivialize $E|_{|e|}$ over every edge $e\in X_1-T$ such that transport along $e$ equals
  $\phi([\Gamma_e])$. We may extend this bundle with trivializations to a $C\epsilon$-flat bundle $E\to|X|$ using theorems
  \ref{thm:extension-of-almost-flat-bundles-1} and \ref{thm:extensions-of-almost-flat-bundles-2}. In turn, this extended
  bundle obviously induces the almost representation $\phi$.

  Next we want to reduce the general case to the one described above. Thus, consider an arbitrary finite presentation 
  $\pi_1(|X|,x_0)=\langle L\mid R\rangle$, and simplicial loops $\Gamma_g$ associated to the elements $g\in L$. We may now choose
  a homomorphism $s_0\colon\Fr(L_0)\to\Fr(L)$ such that $\pi s_0=\pi_0$. This defines a $C_0\epsilon$-almost representation 
  $\phi\circ s_0\colon\Fr(L_0)\to U(\mathcal L_A(V))$ by proposition \ref{prop:properties-of-almost-representations}. Now we may 
  construct the bundle $E\to|X|$ as above, for the representation $\phi\circ\tilde s_0$.
  
  We have to show that transport along the curves $\Gamma_g$ gives an almost representation which is close to $\phi$. Every 
  $\Gamma_g$ is of the form
  \[
    \Gamma_g=(\Gamma_g^0,I_{e_1},\Gamma_g^1,I_{e_2},\ldots,I_{e_{k_g}},\Gamma_g^{k_g})
  \]
  where each $e_i$ is an edge in $X_1-T$, and each $\Gamma_g^i$ is completely contained in $T$. This defines a homomorphism
  $s\colon\Fr(L)\to\Fr(L_0)$ via $g\mapsto[\Gamma_{e_{k_g}}]\cdots[\Gamma_{e_1}]$. Now transport along $\Gamma_g$ equals transport 
  along the compositions of the curves $\Gamma_{e_{k_g}}$, i.~e., it equals $\phi\circ s_0\circ s(g)$. Thus, transport along the
  curves $\Gamma_g$ gives the almost representation $\phi\circ s_0\circ s$, which is $C\epsilon$-close to $\phi$ by
  proposition \ref{prop:properties-of-almost-representations}.
\end{proof}

It also turns out that the isomorphism class of the bundle is uniquely determined by the almost representation induced by the
transport, as the following theorem shows.

\begin{thm}\label{thm:uniqueness-of-extension-of-almost-representations}
  Let $X$ be a finite simplicial complex, and let $\pi_1(|X|,x_0)=\langle L\mid R\rangle$ be a finite presentation of the
  fundamental group of $X$. Suppose that we have two choices $\Gamma_g,\tilde\Gamma_g$ for the generators in $L$. Then there is a 
  constant $\delta>0$, depending on $X$, the presentation, and the choices $\Gamma_g$ and $\Gamma_g'$ such that the 
  following holds:

  If $E\to|X|$ and $E'\to X$ are $\delta$-flat bundles such that transport in $E$ along the 
  curves $\Gamma_g$ and transport in $E'$ along the curves $\Gamma_g'$ give almost representations which are 
  $\delta$-close, then $E$ and $E'$ are isomorphic bundles.
\end{thm}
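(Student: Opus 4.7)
The plan is to verify the hypotheses of theorem \ref{thm:uniqueness-of-extensions-of-almost-flat-bundles}: after adjusting vertex trivializations, I aim to show that for every edge $\{p,q\} \in X_1$,
\[
  \|\Phi_q(q,\cdot)^{-1} T_{[p,q]} \Phi_p(p,\cdot) - \Phi_q'(q,\cdot)^{-1} T_{[p,q]}' \Phi_p'(p,\cdot)\| \leq C\delta.
\]
Combined with theorem \ref{thm:uniqueness-of-extensions-of-almost-flat-bundles}, this yields the bundle isomorphism $E\cong E'$ provided $\delta$ is chosen small enough relative to the dimension of $X$ and the constants from that theorem.

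As a preliminary step, I would unify the generating curves used for both bundles: the loops $\Gamma_g * \tilde\Gamma_g^{-1}$ are null-homotopic in $X$ of bounded homotopical complexity, so theorem \ref{thm:transport-along-contractible-loops} yields that the almost representation of $E'$ computed using $\Gamma_g$ is $C\delta$-close to the original one. I then fix a maximal tree $T \subset X$ and simplicial paths $\gamma_v$ in $T$ from $x_0$ to each vertex $v$ (trivial for $v = x_0$), and define
\[
  U_v^E := \Phi_v(v,\cdot)^{-1} T_{\gamma_v} \Phi_{x_0}(x_0,\cdot) \in U(\mathcal L_A(V)),
\]
together with $U_v^{E'}$ analogously. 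Because transition functions over a $0$-simplex are maps from a point and thus automatically $0$-Lipschitz, I may replace $\Phi_v'(v,\cdot)$ by $\Phi_v'(v,\cdot) U_v^{E'} (U_v^E)^{-1}$ for every $v \neq x_0$ without altering the $\delta$-flat structure of $E'$ or the basepoint trivialization $\Phi_{x_0}'(x_0,\cdot)$. After this modification, $U_v^E = U_v^{E'} =: U_v$ for every vertex.

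For each edge $e = \{p,q\}$, the loop $\Gamma_e^* := \gamma_q^{-1} * (p,q) * \gamma_p$ based at $x_0$ represents some element of $\pi_1(X, x_0)$ and is therefore homotopic in $X$ to a concatenation $\Gamma_{w_e}$ of the curves $\Gamma_g$ for a fixed word $w_e \in \mathrm{Fr}(L)$. Theorem \ref{thm:transport-along-contractible-loops} applied to the null-homotopic loop $\Gamma_e^* * \Gamma_{w_e}^{-1}$ yields $\|T_{\Gamma_e^*} - T_{\Gamma_{w_e}}\| \leq c_1 \delta$, and iterating lemma \ref{lem:very-useful-lemma} using $\delta$-closeness of $\phi_E, \phi_{E'}$ on the generators produces $\|\phi_E(w_e) - \phi_{E'}(w_e)\| \leq c_2 \delta$. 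Using $\Phi_v(v,\cdot) = T_{\gamma_v} \Phi_{x_0}(x_0,\cdot) U_v^{-1}$ and $T_{\Gamma_{w_e}} = \Phi_{x_0}(x_0,\cdot) \phi_E(w_e) \Phi_{x_0}(x_0,\cdot)^{-1}$, a direct substitution gives
\[
  \Phi_q(q,\cdot)^{-1} T_{[p,q]} \Phi_p(p,\cdot) = U_q \Phi_{x_0}^{-1} T_{\Gamma_e^*} \Phi_{x_0} U_p^{-1} \approx U_q \phi_E(w_e) U_p^{-1},
\]
with the analogous expression for $E'$ involving the same $U_v$'s. Subtracting yields the required edge-wise estimate.

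The main technical obstacle is the uniform control of constants across all edges, since both theorem \ref{thm:transport-along-contractible-loops} and the iterative use of lemma \ref{lem:very-useful-lemma} produce bounds depending on the homotopical complexity of $\Gamma_e^* * \Gamma_{w_e}^{-1}$ and on the word length of $w_e$. Finiteness of $X$ resolves this: there are only finitely many edges and one fixed choice of $w_e$ per edge, so all constants can be absorbed into a single $C$ depending only on $X$, the presentation, and the chosen curves.
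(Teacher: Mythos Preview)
Your argument is correct and follows essentially the same route as the paper: fix a maximal tree, adjust vertex trivializations so that the two bundles are aligned along the tree, express the edge quantities $\Phi_q^{-1}T_{[p,q]}\Phi_p$ in terms of the given almost representations via theorem~\ref{thm:transport-along-contractible-loops}, and then invoke theorem~\ref{thm:uniqueness-of-extensions-of-almost-flat-bundles}. The only organizational difference is that the paper passes through the standard presentation $\langle L_0\mid R_0\rangle$ of example~\ref{ex:finite-presentation-of-finite-simplicial-complex} and uses proposition~\ref{prop:properties-of-almost-representations} to compare almost representations abstractly, whereas you carry out the same comparison by hand, edge by edge, using theorem~\ref{thm:transport-along-contractible-loops} and lemma~\ref{lem:very-useful-lemma} directly; your explicit handling of the two curve systems $\Gamma_g,\tilde\Gamma_g$ at the outset corresponds to the paper's use of the two homomorphisms $s,s'\colon\Fr(L)\to\Fr(L_0)$.
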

\begin{proof}
  As in example \ref{ex:finite-presentation-of-finite-simplicial-complex}, we choose a maximal tree $T\subset X$. Then for every
  vertex $v\in X$, we choose a simplicial path from $x_0$ to $v$ which is completely contained in $T$, and trivialize $E$ and $E'$
  over $v$ by composing parallel transport along the chosen path with the given trivializations over $x_0$. This trivialization
  does not depend on the choice of path in $T$, and parallel transport along edges in $T$ becomes trivial with respect to this
  choice of trivialization.

  Again, we consider the standard presentation $\pi_1(|X|,x_0)=\langle L_0\mid R_0\rangle$ from example
  \ref{ex:finite-presentation-of-finite-simplicial-complex}. As in the proof of theorem
  \ref{thm:extension-of-almost-representations}, parallel transport along the edges in $X_1-T$ gives almost presentations
  $\phi_0,\phi_0'\colon\Fr(L_0)\to U(\mathcal L_A(V))$. The choices of $\Gamma_g$ and $\Gamma_g'$ induce group homomorphisms
  $s,s'\colon\Fr(L) \to\Fr(L_0)$, such that $\phi_0\circ s$ and $\phi_0'\circ s'$ are the almost presentations given by parallel 
  transporting along the curves $\Gamma_g$ and $\Gamma_g'$, respectively.

  Choose a homomorphism $s\colon\Fr(L_0)\to\Fr(L)$ satisfying $\pi s=\pi_0$. By assumption,
  $\phi\circ s$ and $\phi\circ s'$ are $\delta$-close, so that $\phi\circ\tilde s\circ\tilde s_0$ and
  $\phi\circ\tilde s\circ\tilde s_0$ are $C_0\delta$-close by proposition \ref{prop:properties-of-almost-representations}. On
  the other hand, these almost representations are $C_1\delta$-close to $\phi_0$ and $\phi_0'$, respectively, again by proposition 
  \ref{prop:properties-of-almost-representations}. This implies that $\phi_0$ and $\phi_0'$ are $C_2\delta$-close.
  However, this $C_2\delta$-closeness is precisely the condition for theorem
  \ref{thm:uniqueness-of-extensions-of-almost-flat-bundles} to work, so the bundles are isomorphic if $\delta$ is small enough.
\end{proof}

\subsection{Asymptotically flat K-theory}

A class $\eta\in K^0(X;A)$ can be represented as the difference $\eta=[E_1]-[E_2]$ of two Hilbert $A$-module bundles $E_i\to|X|$.
We denote by $K^0_\epsilon(X;A)\subset K^0(X;A)$ the set of those classes such that $E_1$ and $E_2$ may be chosen to be
$\epsilon$-flat. In addition, we define the subset of \emph{asymptotically flat K-theory classes} by 
$K^0_{\mathrm{af}}(X;A)=\bigcap_{\epsilon>0}K^0_\epsilon(X;A)$. That is, a class $\eta\in K^0(X;A)$ is asymptotically flat if for
every $\epsilon>0$, there exist $\epsilon$-flat Hilbert $A$-module bundles $E_1,E_2\to|X|$ such that $\eta=[E_1]-[E_2]$.

Note that there is an obvious notion of direct sum for asymptotic representations, which makes $R_{\mathrm{as}}(\pi_1X;A)$ into a
semi-group. Now theorems \ref{thm:extension-of-almost-representations} and
\ref{thm:uniqueness-of-extension-of-almost-representations} show that there is a well-defined semi-group homomorphism
$R_{\mathrm{as}}(\pi_1X;A)\to K^0_{\mathrm{af}}(X;A)$ which induces a group homomorphism
$\alpha\colon\mathrm{Gr}(R_{\mathrm{as}}(\pi_1X;A))\to K^0_{\mathrm{af}}(X;A)$. By proposition
\ref{prop:functoriality-of-asymptotic-representations}, $\alpha$ is surjective. Furthermore, one can show that $\alpha$ is
compatible with the pullback maps of asymptotic representations and asymptotically flat K-theory, so it gives a natural
transformation. However, $\alpha$ is certainly not an isomorphism ($R_{\mathrm{as}}(\pi_1X;A)$ is not even abelian), so it would
be interesting to examine the kernel of $\alpha$.

\appendix

\section{Parallel transport and curvature}

\label{app:parallel-transport-curvature}

In this section, we will prove proposition \ref{prop:transport-along-small-curves-is-small} which states that parallel transport
along curves which bound a small area is small. This proof follows ideas from \cite{rani-on-parallel-transport} and an 
unpublished proof by Jost-Hinrich Eschenburg, who in turn learned the idea from Hermann Karcher. 

In the course of the proof we will need the following lemma:

\begin{lem}\label{lem:parallel-transport-along-curves}
  Let $E\to[0,1]$ be a smooth Hilbert $A$-module bundle with (not necessarily compatible) connection $\nabla$. We denote parallel 
  transport along $\gamma$ by $T_\gamma(t)\colon E_t\to E_1$ and consider a section $s\colon[0,1]\to E$. Then
  \[
    \partial_t(T_\gamma(t)s(t))=T_\gamma(t)\nabla_{\partial_t}s(t)
  \]
  for all $t\in[0,1]$.
\end{lem}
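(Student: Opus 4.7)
The plan is to reduce the assertion to the trivial identity $\partial_t w = \partial_t w$ by using parallel transport to flatten the pullback bundle $\gamma^*E$ over $[0,1]$, where the stated identity will literally become the statement that the trivial connection differentiates sections as ordinary derivatives.

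First I would set up the trivialization. Define $\Psi\colon[0,1]\times E_1\to\gamma^*E$ by sending $(t,v)$ to $T_\gamma(t)^{-1}v\in E_t$, i.e.\ to the value at time $t$ of the unique parallel section along $\gamma$ that equals $v$ at time $1$. Since parallel transport along a smooth curve depends smoothly on its starting point and on $t$, the map $\Psi$ is a smooth bundle isomorphism. Crucially, by construction every constant section $t\mapsto(t,v)$ is mapped to a parallel section of $\gamma^*E$, so under $\Psi$ the pullback connection $\gamma^*\nabla$ is transported to the trivial connection $\partial_t$ on $[0,1]\times E_1$.

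Next I would translate the given section $s$ to the trivialization. Setting $w(t):=T_\gamma(t)s(t)\in E_1$, we have $s(t)=\Psi(t,w(t))$, so $s$ corresponds under $\Psi$ to the section $t\mapsto w(t)$ of the trivial bundle. Applying the defining relation of $\Psi$ to the covariant derivative, $\nabla_{\partial_t}s(t)=\Psi(t,\partial_t w(t))=T_\gamma(t)^{-1}\partial_t w(t)$, since the connection in the trivialization is just $\partial_t$. Multiplying both sides by $T_\gamma(t)$ yields $T_\gamma(t)\nabla_{\partial_t}s(t)=\partial_t w(t)=\partial_t(T_\gamma(t)s(t))$, which is the desired identity.

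The only point that requires a moment of care is justifying that parallel sections along $\gamma$ exist and are uniquely determined by their value at $t=1$, and that the resulting trivialization is smooth, in the Hilbert $A$-module setting. This is established exactly as in the vector-bundle case: in a local smooth trivialization $\gamma^*E\cong U\times V$ the connection $\gamma^*\nabla$ takes the form $\partial_t+\Gamma(t)$ for a smooth $\mathcal L_A(V)$-valued function $\Gamma$, and the parallel-transport ODE $\dot u(t)=-\Gamma(t)u(t)$ with values in the Banach space $\mathcal L_A(V)$ has unique smooth solutions for every initial datum. Stitching these local solutions together produces the global $\Psi$, after which the algebraic manipulation above completes the proof. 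No extra difficulties arise from dropping compatibility of $\nabla$ with the metric, since the argument never uses it.
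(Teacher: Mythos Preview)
Your argument is correct and the core idea---trivializing $\gamma^*E$ by a parallel frame so that the identity reduces to $\partial_t w=\partial_t w$---is exactly what the paper means by ``writing both sides in a parallel frame.'' The only organizational difference is in how the non-free case is treated: the paper reduces to the free case by choosing a complement $E'$ with $E\oplus E'$ free and extending the connection, whereas you handle projective fibres directly by invoking the Banach-space Picard--Lindel\"of theorem for the parallel-transport ODE $\dot u=-\Gamma u$. Your route is slightly more self-contained, while the paper's reduction avoids re-justifying that $\Gamma$ really defines a pointwise endomorphism (a fact the paper has already established earlier via the same complement trick, and which your ODE formulation tacitly uses). Either way the substance is the same.
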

\begin{proof}
  If $E$ is modeled on a free Hilbert $A$-module, the statement is easily shown by writing both sides in a parallel frame.
  In the general case, one has to consider another bundle $E'$ such that $E\oplus E'$ is modeled on a free Hilbert $A$-module. It
  is easily possible to extend the connection on $E$ to a connection on $E\oplus E'$, so the general case follows from the free
  case.
\end{proof}

\begin{proof}[Proof of proposition \ref{prop:transport-along-small-curves-is-small}]
  Take $x\in E_{f(0,0)}$ with $\|x\|=1$, and let $x'=P_{\partial f}x$. For $s\in[0,1]$, let $X(s,0)\in E_{f(s,0)}$ be the
  parallel translate of $x$ along the curve $s\mapsto f(s,0)$, and for $(s,t)\in[0,1]$, let $X(s,t)\in E_{f(s,t)}$ be the parallel
  translate of $X(s,0)$ along the curve $t\mapsto f(s,t)$.

  Furthermore, let $P_{(s,t)}\colon E_{f(s,t)}\to E_{f(1,1)}$ be defined by first parallel translating along $t\mapsto f(s,t)$ and
  then along $s\mapsto f(s,1)$. Now, by definition, $P_{(0,0)}x'=X(1,1)$, and $P_{(0,0)}x=P_{(0,1)}X(0,1)$, so that
  \[
    P_{(0,0)}(x'-x)=P_{(1,1)}X(1,1)-P_{(0,1)}X(0,1)=\int_0^1\partial_s\left(P_{(s,1)}X(s,1)\right)\,ds.
  \]
  Since $P_{(s,1)}$ is parallel transport along the curve $s\mapsto(s,1)$, lemma \ref{lem:parallel-transport-along-curves} implies
  that $\partial_s(P_{(s,1)}X(s,1))=P_{(s,1)}\nabla_{\partial_sf(s,1)}X(s,1)$. Now $X(s,0)$ is parallel along $s\mapsto f(s,0)$ by
  definition, so that $\nabla_{\partial_sf}X(s,0)=0$. Again with lemma \ref{lem:parallel-transport-along-curves}, it follows that
  \[
    P_{(s,1)}\nabla_{\partial_sf}X(s,1)=\int_0^1\partial_t(P_{(s,t)}\nabla_{\partial_sf}X(s,t))\,dt
    =\int_0^1P_{(s,t)}\nabla_{\partial_tf}\nabla_{\partial_sf}X(s,t)\,dt
  \]
  for all $s\in[0,1]$. In addition, we have that $\nabla_{\partial_tf}X(s,t)=0$ since $X$ is parallel in the $t$-direction by 
  definition. Therefore, $\mathcal R^\nabla(\partial_tf\wedge\partial_sf)X=\nabla_{\partial_tf}\nabla_{\partial_sf}X$.

  Since $\nabla$ is compatible with the metric, one can easily show that parallel transport preserves the metric and in particular
  the norm. Thus, the equations combine to give
  \[
    \|x'-x\|\leq\int_0^1\int_0^1\|\mathcal R^\nabla(\partial_tf\wedge\partial_sf)\|\,dt\,ds.\qedhere
  \]
\end{proof}

\section{Unitary elements of C*-algebras}

\label{app:unitary-elements-of-cstar-algebras}

In this section, we will give a proof of lemma \ref{lem:extension-of-unitary-valued-lipschitz-maps}, which states that
$\epsilon$-Lipschitz maps from the sphere $S^{n-1}$ into the unitary elements $U(A)$ of an arbitrary C*-algebra $A$ may be 
extended to $C\epsilon$-Lipschitz maps on the whole disk $D^n$ whenever
$\epsilon$ is small enough. Here $C$ is some universal constant which depends neither on the C*-algebra $A$, nor on the dimension
of the sphere $S^{n-1}$. The result will be
important even for the classical case of Hermitian vector bundles since it shows that maps into the set of unitary matrices can be
extended as above with a constant $C$ independent of the size of the matrices.

We begin with a statement which allows the extension of Lipschitz maps $S^{n-1}\to V$ to Lipschitz maps on $D^n$ if $V$ is a
normed vector space.

\begin{lem}\label{lem:extension-of-lipschitz-maps-to-vector-spaces}
  There is a universal constant $C_0>0$ with the following property:
  Let $\beta_0\colon S^{n-1}\to V$ be a $\lambda$-Lipschitz map into a normed vector space $V$. Assume additionally that
  $\beta_0(S^{n-1})\subset B_R(0)$ for a number $R>0$ and $\beta_0(s_0)=0$ for some $s_0\in S^{n-1}$. Then there is an extension
  $\beta\colon D^n\to B_R(0)\subset V$ which is Lipschitz with constant at most $C_0\lambda$.
\end{lem}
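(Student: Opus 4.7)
The plan is to extend $\beta_0$ by radial scaling: set $\beta(0) = 0$ and $\beta(x) = \|x\| \cdot \beta_0(x/\|x\|)$ for $0 < \|x\| \leq 1$. Since $\|x\| \leq 1$ and $\beta_0$ takes values in $B_R(0)$, the extension automatically satisfies $\|\beta(x)\| \leq R$, so it maps $D^n$ into $B_R(0)$ as required. The essential observation that powers the argument is that the hypothesis $\beta_0(s_0) = 0$, together with the Lipschitz assumption, forces
\[
  \|\beta_0(s)\| \leq \lambda \cdot \diam(S^{n-1}) \leq 2\lambda
\]
uniformly in $s \in S^{n-1}$; this uniform bound is independent of $R$, of the dimension $n$, and of the ambient space $V$, which is exactly what will give a universal constant $C_0$.

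First I would check the Lipschitz bound on the punctured disk. For $x, y \in D^n \setminus \{0\}$, I write
\[
  \beta(x) - \beta(y) = \|x\| \bigl( \beta_0(x/\|x\|) - \beta_0(y/\|y\|) \bigr) + \bigl( \|x\| - \|y\| \bigr) \beta_0(y/\|y\|).
\]
The second summand is bounded in norm by $2\lambda \|x - y\|$ using $\bigl|\|x\| - \|y\|\bigr| \leq \|x - y\|$ and the uniform bound $\|\beta_0\| \leq 2\lambda$. For the first summand I would invoke the elementary estimate
\[
  \bigl\| x/\|x\| - y/\|y\| \bigr\| \leq \frac{2\|x - y\|}{\max(\|x\|, \|y\|)},
\]
combine it with the $\lambda$-Lipschitz property of $\beta_0$, and use $\|x\| \leq \max(\|x\|, \|y\|)$ to obtain a bound of $2\lambda \|x - y\|$. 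Adding the two estimates yields the Lipschitz constant $4\lambda$ on $D^n \setminus \{0\}$.

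Next I would handle the boundary case $y = 0$ separately. Continuity of $\beta$ at the origin follows from $\|\beta(x)\| \leq 2\lambda \|x\|$, and this same inequality reads $\|\beta(x) - \beta(0)\| \leq 2\lambda \|x - 0\|$, so the Lipschitz bound survives across the origin. Hence $\beta$ is globally $(4\lambda)$-Lipschitz and we may take $C_0 = 4$.

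The proof has no substantive obstacle; the only point requiring care is to check that $C_0$ inherits no dependence on $V$, $R$, or $n$. It does not, because the only nontrivial estimate on $\beta_0$ that enters is the uniform bound $\|\beta_0\| \leq 2\lambda$, which follows directly from the vanishing condition at $s_0$ and the Lipschitz assumption.
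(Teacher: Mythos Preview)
Your proof is correct, and interestingly it is precisely the ``first idea'' that the paper mentions and then sets aside as ``not as easy as it looks.'' The paper instead contracts only on the outer annulus $\{\tfrac12\leq\|x\|\leq 1\}$ and extends by zero on the inner half-disk, then appeals to a bi-Lipschitz identification of the annulus with $S^{n-1}\times[0,1]$ to read off the Lipschitz constant. Your argument shows that the direct radial cone $\beta(x)=\|x\|\,\beta_0(x/\|x\|)$ already works: the vanishing hypothesis $\beta_0(s_0)=0$ gives the uniform bound $\|\beta_0\|_\infty\leq 2\lambda$, which kills the potential blow-up of the radial projection near the origin, and the standard estimate $\bigl\|x/\|x\|-y/\|y\|\bigr\|\leq 2\|x-y\|/\max(\|x\|,\|y\|)$ does the rest. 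The payoff is an explicit constant $C_0=4$ and a shorter argument; the paper's route has the mild advantage that the Lipschitz estimate on a product $S^{n-1}\times[0,1]$ is more transparent, but at the cost of invoking the annulus-to-cylinder bi-Lipschitz equivalence (with dimension-independent constants) rather than computing directly.
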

\begin{proof}
  A first idea would be to define the extension by $\beta(t\cdot x)=t\beta_0(x)$ for $x\in\partial D^n$, $t\in[0,1]$. This
  certainly gives a continuous extension, but it turns out that the problem of calculating the Lipschitz constant for the
  resulting map is not as easy as it looks. However, one can simply do the contraction on a ring and extend constantly by zero on
  the interior, i.~e.
  \[
    \beta(t\cdot x)=\begin{cases}(2t-1)\beta_0(x),&t\geq\frac 12,\\0,&t\leq\frac 12.\end{cases}
  \]
  Then, using that $\{x\in\R^n:\frac 12\leq\|x\|\leq 1\}$ and $S^{n-1}\times[0,1]$ are bi-Lipschitz equivalent (and that, by an
  explicit calculation, the Lipschitz constants do not depend on $n$), one can easily deduce the statement.
\end{proof}

We will need the statement that every function given by holomorphic functional calculus is Fréchet differentiable.

\begin{lem}\label{lem:holomorphic-functional-calculus-frechet-differentiable}
  Let $f\colon U_\epsilon\to\C$ be a holomorphic map where $U_\epsilon=\{z\in\C:\|z\|<\epsilon\}$ is a ball of radius $\epsilon>0$ 
  around $0\in\C$.
  Let $f(z)=\sum_{n=0}^\infty\lambda_nz^n$ be the power series expansion of $f$ around $0$. Furthermore, consider the power series
  $\hat f(z)=\sum_{n=0}^\infty|\lambda_n|z^n$. If $A$ is a Banach algebra, then the map
  \[
    \tilde f\colon\{x\in A:\|x\|<\epsilon\}\to A,\quad x\mapsto\sum_{n=0}^\infty\lambda_nx^n
  \]
  is well-defined and Fréchet differentiable, and the operator norm of the Fréchet differential of $\tilde f$ at $x\in A$ is
  bounded by $\hat f'(\|x\|)$. For every $x$ in the domain of $\tilde f$, we have that $x\tilde f(x)=\tilde f(x)x$. This is a
  special case of the so-called \emph{functional calculus}.
\end{lem}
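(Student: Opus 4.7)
First I would dispose of well-definedness and commutativity, both of which are routine. Since $f$ is holomorphic on $U_\epsilon$, its Maclaurin series has radius of convergence at least $\epsilon$, so $\sum |\lambda_n| r^n < \infty$ for every $r < \epsilon$; submultiplicativity of the Banach algebra norm gives $\|\lambda_n x^n\| \leq |\lambda_n|\|x\|^n$, and hence the series defining $\tilde f(x)$ converges absolutely whenever $\|x\| < \epsilon$. The identity $x\tilde f(x) = \tilde f(x)x$ holds on each partial sum (which is a polynomial in $x$) and extends to the limit by continuity of multiplication.

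For the main claim, I would define the candidate Fréchet derivative at $x$ as the linear map
\[
  L_x\colon A\to A,\qquad L_x(h) = \sum_{n=1}^\infty \lambda_n \sum_{k=0}^{n-1} x^k\, h\, x^{n-1-k},
\]
which converges absolutely because $\|x^k h\, x^{n-1-k}\|\leq \|x\|^{n-1}\|h\|$ and $\sum_n |\lambda_n| n \|x\|^{n-1} = \hat f'(\|x\|)$. The same termwise estimate yields the operator norm bound $\|L_x\|_{\text{op}} \leq \hat f'(\|x\|)$.

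The decisive step is the remainder estimate. Expanding $(x+h)^n$ as the sum of all $2^n$ words of length $n$ in the two non-commuting letters $x$ and $h$, I separate the single word with no $h$ (which gives $x^n$) and the $n$ words with exactly one $h$ (which give $\sum_{k=0}^{n-1} x^k h\, x^{n-1-k}$). Bounding each remaining word, containing $j \geq 2$ occurrences of $h$, by $\|x\|^{n-j}\|h\|^j$, one obtains
\[
  \Bigl\| (x+h)^n - x^n - \sum_{k=0}^{n-1} x^k h\, x^{n-1-k} \Bigr\| \leq (\|x\|+\|h\|)^n - \|x\|^n - n\|x\|^{n-1}\|h\|.
\]
Weighting by $|\lambda_n|$ and summing over $n$ (valid whenever $\|x\|+\|h\| < \epsilon$, where all the relevant series converge absolutely) then gives
\[
  \|\tilde f(x+h) - \tilde f(x) - L_x(h)\| \leq \hat f(\|x\|+\|h\|) - \hat f(\|x\|) - \hat f'(\|x\|)\|h\|,
\]
which is the first-order Taylor remainder of the real-analytic function $\hat f$ at $\|x\|$, hence $O(\|h\|^2)$ as $h\to 0$. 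This establishes Fréchet differentiability of $\tilde f$ at $x$ with derivative $L_x$, together with the norm bound.

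The main obstacle is precisely the noncommutativity of $A$: without $xh = hx$, the ordinary binomial formula fails and one cannot write $(x+h)^n = \sum_k \binom{n}{k} x^k h^{n-k}$, so the candidate derivative has to be the symmetrized sum $\sum_k x^k h\, x^{n-1-k}$ rather than $n x^{n-1} h$. The remedy above is to perform the full noncommutative expansion into $2^n$ monomials and only at the last step invoke the triangle inequality and submultiplicativity to reassemble the estimates into the clean real-variable expressions involving $\hat f$ and $\hat f'$.
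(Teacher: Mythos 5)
Your proof is correct and follows the same approach as the paper: the paper defines the same candidate Fréchet differential $\Delta_x(h) = \sum_{n\geq 1}\lambda_n\sum_{k=0}^{n-1}x^k h x^{n-1-k}$ and then states that "a straightforward calculation" verifies it is the derivative. Your noncommutative expansion of $(x+h)^n$ into $2^n$ words and the resulting bound $\hat f(\|x\|+\|h\|)-\hat f(\|x\|)-\hat f'(\|x\|)\|h\|$ is precisely that omitted calculation, carried out cleanly.
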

\begin{proof}
  For every point $x$ in the domain of $\tilde f$, it is easy to see that the series
  \[
    \Delta_x\colon A\to A,\quad h\mapsto\sum_{n=1}^\infty\lambda_n\sum_{k=0}^{n-1}x^khx^{n-k-1}
  \]
  converges and gives a linear map with operator norm bounded by $\hat f'(\|x\|)$. Now a straightforward calculation shows that
  this is the Fréchet differential of $\tilde f$ at $x$.
\end{proof}

Now let $A$ be a C*-algebra. We write $U(A)=\{u\in A: uu^*=u^*u=1\}$, $\mathfrak u(A)=\{v\in A:v^*=-v\}$ and $H_A=\{v\in
A:v^*=v\}$. Elements of $U(A)$ are called \emph{unitary}, elements of $\mathfrak u(A)$ are called \emph{skew-Hermitian}, and
elements of $H_A$ are called \emph{Hermitian}. Obviously $A=\mathfrak u(A)\oplus H_A$ as a vector space. We denote by $\pi\colon
A\to\mathfrak u(A)$ the projection onto the first summand with respect to this decomposition

If $v\in\mathfrak u(A)$, we have that $v^2\in H_A$. Now we consider the map $f\colon\{z\in\C:\|z\|<\frac 12\}\to\C$ which is given
by $f(z)=(1+z^2)^{1/2}$. Since $f(z)=f(-z)$, $f$ is really a power series in $z^2$. so if we define $\tilde f$ as in lemma
\ref{lem:holomorphic-functional-calculus-frechet-differentiable}, we see that $\tilde f$ maps elements of $\mathfrak u(A)$ into
$H_A$. 

\begin{lem}\label{lem:parametrization-of-unitaries-is-locally-lipschitz}
  There is a constant $L>0$, independent of $A$, such that the so-defined map
  \[
    \tilde f\colon\left\{v\in\mathfrak u(A):\|v\|<\frac 12\right\}\to H_A,\quad v\mapsto (1+v^2)^{1/2}
  \]
  is Lipschitz with constant at most $L$.
\end{lem}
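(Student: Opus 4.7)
The plan is to apply lemma \ref{lem:holomorphic-functional-calculus-frechet-differentiable} directly to the holomorphic function $f(z)=(1+z^2)^{1/2}$, defined and holomorphic on $\{|z|<1\}\supset\{|z|<1/2\}$. That lemma already produces the Fréchet derivative of $\tilde f$ together with an explicit bound by $\hat f'(\|v\|)$, so once $\hat f'$ is bounded on $[0,1/2)$ by a universal constant, Lipschitzness follows from the mean value inequality on the convex open set $\{v\in\mathfrak u(A):\|v\|<1/2\}$.

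The key computation is to identify $\hat f$ in closed form. Writing the binomial expansion $f(z)=\sum_{n\geq 0}\binom{1/2}{n}z^{2n}$, one observes that $\binom{1/2}{n}$ has sign $(-1)^{n-1}$ for $n\geq 1$. Comparing with the analogous expansion $(1-z^2)^{1/2}=\sum_{n\geq 0}(-1)^n\binom{1/2}{n}z^{2n}=1-\sum_{n\geq 1}\bigl|\binom{1/2}{n}\bigr|z^{2n}$, one reads off
\[
  \hat f(r)=\sum_{n\geq 0}\Bigl|\binom{1/2}{n}\Bigr|r^{2n}=2-\sqrt{1-r^2},
\]
so that $\hat f'(r)=r/\sqrt{1-r^2}$. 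This is monotonically increasing on $[0,1/2)$, bounded above by $\hat f'(1/2)=1/\sqrt 3$.

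Thus, by lemma \ref{lem:holomorphic-functional-calculus-frechet-differentiable}, the Fréchet derivative $D\tilde f_v$ satisfies $\|D\tilde f_v\|\leq 1/\sqrt 3$ for every $v$ in the open ball of radius $1/2$ in $\mathfrak u(A)$. Since this ball is convex, the Banach-space mean value inequality yields that $\tilde f$ is $L$-Lipschitz with $L=1/\sqrt 3$, and this constant depends on nothing but the function $f$ itself — in particular it is independent of $A$. The fact that $\tilde f$ takes values in $H_A$ was already recorded in the discussion preceding the lemma (since $f$ is an even power series, $\tilde f(v)$ is a power series in $v^2\in H_A$). I do not expect any substantial obstacle; the only thing to verify carefully is the sign computation for the binomial coefficients, which is elementary.
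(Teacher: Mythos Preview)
Your proof is correct and follows exactly the paper's argument: apply lemma \ref{lem:holomorphic-functional-calculus-frechet-differentiable} to bound the Fréchet derivative by $\hat f'(\|v\|)$, then use convexity of the domain to pass to a Lipschitz constant. The only difference is that you make the bound explicit by computing $\hat f(r)=2-\sqrt{1-r^2}$ and hence $L=1/\sqrt 3$, whereas the paper merely invokes continuity of $\hat f'$ on $[0,\tfrac12]$ to obtain some unspecified $L$.
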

\begin{proof}
  By lemma \ref{lem:holomorphic-functional-calculus-frechet-differentiable}, the map $f$ is Fréchet differentiable, and 
  the operator norm of the differential is bounded by a differentiable map $\hat f'\colon(-1,1)\to\R$
  which is independent of $A$. This map is bounded by some constant $L>0$ if it is restricted to the closed interval $[0,\frac
  12]$, so the operator norm of the Fréchet differential of $f$ is bounded by $L$ on its whole domain. Now the result immediately
  follows immediately using the convexity of the domain of $f$.
\end{proof}

Now let $v\in\mathfrak u(A)$ with $\|v\|<\frac 12$ and $w=\tilde f(v)\in H_A$. Since $w$ arose from $v$ 
by functional calculus, the elements $v$ and $w$ commute, so
\[
  (v+w)(v+w)^*=(v+w)(-v+w)=-v^2+[v,w]+w^2=-v^2+(1+v^2)=1,
\]
and, similarly, $(v+w)^*(v+w)=1$. Therefore, $v+w\in U(A)$.

On the other hand, the projection $\pi\colon A\to\mathfrak u(A)$ is a linear map with operator norm equal to $1$ because
$\pi(a)=\frac 12(a-a^*)$. Thus, $\|\pi(x)\|=\|\pi(x)-\pi(1)\|=\|\pi(x-1)\|\leq\|x-1\|$ for all $x\in A$ where we used that $1\in
H_A$. Now we can prove the announced extension result for maps into $U(A)$.

\begin{proof}[Proof of lemma \ref{lem:extension-of-unitary-valued-lipschitz-maps}]
  We consider the map
  \[
    g\colon\left\{v\in\mathfrak u(A):\|v\|<\frac 12\right\}\to U(A),\quad v\mapsto v+(1+v^2)^{1/2}
  \]
  and the projection $\pi\colon U(A)\subset A=\mathfrak u(A)\oplus H_A\to\mathfrak u(A)$. Using lemma
  \ref{lem:parametrization-of-unitaries-is-locally-lipschitz}, one easily shows that $g$ is $(1+L)$-Lipschitz, where $L>0$ is
  independent of $A$. We have already noted that $\pi$ is $1$-Lipschitz. 

  Now let $\alpha_0\colon S^{n-1}\to U(A)$ be as in the statement of the theorem. We choose $s_0\in S^{n-1}$ and consider the map
  \[
    \alpha_1\colon S^{n-1}\to\mathfrak u(A),\quad x\mapsto\pi(\alpha_0(s_0)^{-1}\alpha_0(x)).
  \]
  Since $\alpha_0(s_0)^{-1}\in U(A)$ and multiplication by unitary elements is an isometry in C*-algebras, we have that
  $\alpha_1(S^{n-1})\subset\{v\in\mathfrak u(A):\|v\|<\frac 12\}$. Furthermore, $\alpha_1$ is Lipschitz with constant at most
  $\lambda$. We may now extend $\alpha_1$ to a $(C_0\lambda)$-Lipschitz map $\alpha_2\colon D^n\to\{v\in\mathfrak
  u(A):\|v\|<\frac 12\}$ using lemma \ref{lem:extension-of-lipschitz-maps-to-vector-spaces}. Now we let
  \[
    \alpha\colon D^n\to U(A),\quad x\mapsto\alpha_0(s_0)\cdot g\alpha_2(x).
  \]
  Then the Lipschitz constant of $\alpha$ is at most $C_0(1+L)\lambda$, and it is rather clear that $\alpha|_{S^{n-1}}=\alpha_0$. 
  This shows the statement of the theorem with constant $C=C_0(1+L)$, which is in fact independent of $A$.
\end{proof}

Note that the condition on $\diam(\alpha_0)$ is immediate if $\lambda\leq\frac 14$.

The methods in this chapter can also be used to prove the following statement, which is used in the proof of lemma
\ref{lem:smooth-approximation}.

\begin{lem}\label{lem:projection-onto-unitaries-lipschitz}
  The map
  \[
    f\colon\left\{x\in GL(A):\dist(x,U(A))<\frac 13\right\}\to U(A),\quad x\mapsto (xx^*)^{-1/2}x
  \]
  is well-defined, equals the identity on $U(A)$, and is Lipschitz with some Lipschitz constant $L$ which does not depend on $A$.
\end{lem}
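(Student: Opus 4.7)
The plan is to reduce everything to three Lipschitz ingredients and then combine them. First, I would set up domain estimates: if $u\in U(A)$ with $\|x-u\|<1/3$, then $\|x\|\le 4/3$, and
\[
  \|xx^*-1\|=\|xx^*-uu^*\|\le\|x-u\|\|x^*\|+\|u\|\|x^*-u^*\|<\tfrac{4}{9}+\tfrac{1}{3}=\tfrac{7}{9}<1.
\]
In particular $xx^*$ is Hermitian with spectrum in $[2/9,16/9]$, so $(xx^*)^{-1/2}$ is well-defined by continuous functional calculus and $\|(xx^*)^{-1/2}\|\le 3/\sqrt{2}$. Well-definedness of $f(x)$ and unitarity follow: writing $v=f(x)$, a direct calculation gives $vv^*=1$, and since $v$ is invertible (as a product of invertibles, using $x\in GL(A)$), automatically $v^*v=1$. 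That $f=\id$ on $U(A)$ is immediate from $uu^*=1$.

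For the Lipschitz bound I would factor $f$ as $x\mapsto xx^*\mapsto(xx^*)^{-1/2}\mapsto(xx^*)^{-1/2}\cdot x$. The map $x\mapsto xx^*$ is $(8/3)$-Lipschitz on the domain by the same triangle-inequality estimate used above. For the inverse-square-root step, I would apply lemma \ref{lem:holomorphic-functional-calculus-frechet-differentiable} to the holomorphic function $g(a)=(1+a)^{-1/2}=\sum_{n\ge 0}\binom{-1/2}{n}a^n$ on $\{|a|<1\}$: by that lemma, the induced functional-calculus map $\tilde g$ is Fr\'echet differentiable on $\{a\in A:\|a\|<1\}$ with differential bounded by the real derivative $\hat g'(\|a\|)$ of the majorant series $\hat g(t)=\sum|\binom{-1/2}{n}|t^n$. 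Restricting to the ball $\{\|a\|\le 7/9\}$ and using convexity, $\tilde g$ is $M$-Lipschitz there with $M=\hat g'(7/9)$, a constant depending neither on $A$ nor on $x$. Translating by $1$, $y\mapsto y^{-1/2}$ is $M$-Lipschitz on $\{\|y-1\|\le 7/9\}$, so $x\mapsto(xx^*)^{-1/2}$ is $(8M/3)$-Lipschitz on the domain of $f$.

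Finally, since $(xx^*)^{-1/2}$ and $x$ both have range bounded by explicit universal constants ($3/\sqrt{2}$ and $4/3$ respectively), the product $f(x)=(xx^*)^{-1/2}\cdot x$ is Lipschitz with
\[
  \|f(x)-f(x')\|\le\tfrac{8M}{3}\cdot\tfrac{4}{3}\|x-x'\|+\tfrac{3}{\sqrt{2}}\|x-x'\|,
\]
giving a universal constant $L$.

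The main obstacle, as in the rest of the appendix, is ensuring the Lipschitz constant is independent of $A$. This is precisely what lemma \ref{lem:holomorphic-functional-calculus-frechet-differentiable} buys us: the bound on the Fr\'echet differential is expressed through a real-variable majorant $\hat g'$ that knows nothing about $A$. Everything else in the estimate is built from elementary C*-algebra inequalities whose constants are absolute.
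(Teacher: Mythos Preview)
Your proof is correct and follows essentially the same approach as the paper: both establish $\|xx^*-1\|<7/9$ from the distance hypothesis, invoke lemma~\ref{lem:holomorphic-functional-calculus-frechet-differentiable} to get a uniform Lipschitz bound for $h\mapsto h^{-1/2}$ on the Hermitian ball $\{\|h-1\|\le 7/9\}$, and then combine with the product rule. The paper's version is terser (it dismisses the final step as ``straightforward''), while you spell out the explicit constants, but the strategy is identical.
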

\begin{proof}
  Consider an element $x$ in the domain of $f$. Then $\|xx^*-1\|=\|(x-u)x^*+u(x-u)^*\|\leq\|x-u\|(\|x\|+\|u\|)<\frac 13(2+\frac
  13)=\frac 79$. Because of lemma \ref{lem:holomorphic-functional-calculus-frechet-differentiable}, the map $H_A\to H_A$ sending
  $h$ to $h^{-1/2}$ is well-defined and $L_1$-Lipschitz on the set of all $h\in H_A$ with $\|h-1\|\leq\frac 79$. Using this, the
  Lipschitzness of $f$ is straightforward. The other assertions of the lemma are clear.
\end{proof}

\printbibliography

\end{document}